\newtheorem{theorem}{Theorem}[section]
\newtheorem{lemma}[theorem]{Lemma}
\newtheorem{proposition}[theorem]{Proposition}
\newtheorem{corollary}[theorem]{Corollary}
\def\R{\mathbb{R}}
\def\N{\mathbb{N}}
\def\Z{\mathbb{Z}}
\def\Q{\mathbb{Q}}
\def\Zp{\mathbb{Z}_{p}}
\def\Qp{\mathbb{Q}_{p}}
\begin{document}

\title[Compact open spectral sets in $\mathbb{Q}_p$]{Compact open spectral sets in $\mathbb{Q}_p$ }

\date{\today}

\author
{Aihua Fan}
\address
{School of Mathematics and Statistics, Central  China Normal University, Wuhan, 430079,  China \& 
LAMFA, UMR 7352 CNRS, University of Picardie,
33 rue Saint Leu, 80039 Amiens, France}
\email{ai-hua.fan@u-picardie.fr}

\author
{Shilei Fan}
\address
{School of Mathematics and Statistics \& Hubei Key Laboratory of Mathematical Sciences, Central  China Normal University,  Wuhan, 430079,  China}
\email{slfan@mail.ccnu.edu.cn}

\author
{Ruxi Shi}
\address
{LAMFA, UMR 7352 CNRS, University of Picardie,
33 rue Saint Leu, 80039 Amiens \& Departement de mathematiques et application, Ecole Normale Superieure, 75005, Paris, France}
\email{ruxi.shi@u-picardie.fr}

\thanks{A. H. FAN was supported by NSF of China (Grant No. 11471132) and self-determined research funds of CCNU (Grant No. CCNU14Z01002); S. L. FAN was supported by NSF of China (Grant No.s 11401236 and 11231009)}

\begin{abstract}
In this article, we prove that a compact open set  in the field $\Qp$ of $p$-adic numbers is a spectral set if and only if it tiles $\Qp$ by translation, and also if and only if it is $p$-homogeneous which is easy to check. We also characterize spectral sets in $\mathbb{Z}/p^n \mathbb{Z}$
($p\ge 2$ prime, $n\ge 1$ integer) by  tiling property and also by homogeneity.
Moreover, we construct a class of singular spectral measures in $\Qp$, some of which are self-similar measures.
\end{abstract}

\subjclass[2010]{Primary 43A99; Secondary 05B45, 26E30}
\keywords{$p$-adic field, compact open, spectral set, tiling}

\footnote[1]{Revised version to appear in Journal of Functional Analysis}

\maketitle
\section{Introduction}\label{introduction}
The problem that we consider is generally rised for all locally compact Abelian groups  and the results that we obtain concern only the field $\Qp$ of $p$-adic numbers ($p\ge 2$ being a prime).
Let us first state the problem.
Let $G$ be a locally compact Abelian group and $\Omega\subset G$ be a Borel set of positive and finite Haar measure. The set $\Omega$ is said to be {\em spectral} if there exists a set $\Lambda \subset \widehat{G}$ of continuous characters of $G$ which forms
a Hilbert basis of the space $L^2(\Omega)$ of square Haar-integrable functions. Such a set
$\Lambda$ is called a {\em spectrum} of
$\Omega$ and $(\Omega,\Lambda)$ is called a {\em spectral pair}. We say that the set $\Omega$
{\em  tiles}  $G$ by translation if there exists a set $T \subset G$ of translates
such that $\sum_{t\in T} 1_\Omega(x-t) =1$ for almost all $x\in G$, where $1_A$ denotes the indicator function of a set $A$. Such a set $T$ is called a
{\em tiling complement} of $\Omega$ and $(\Omega, T)$ is called a {\em tiling pair}.
The so-called {\em spectral set conjecture} states that $\Omega$ is a spectral set if and only if $\Omega$ tiles $G$.

This conjecture in the case $G=\mathbb{R}^d$ is the famous Fuglede spectral set conjecture \cite{Fuglede1974}. Both the original Fuglede conjecture and the generalized conjecture stated above have
attracted considerable attention over the last decades. For the case of $\mathbb{R}^d$, many
positive results were obtained \cite{Iosevich-Katz-Tao2003,Iosevich-Pedersen1998,Jorgensen-Pedersen1992,Jorgensen-Pedersen1999,Kolountzakis2000convex,Kolountzakis2000,Lagarias-Wang1997,Lagarias-Reed-Wang2000} before Tao \cite{Tao2004}  disproved it by showing that the
direction ``Spectral $\Rightarrow$ Tiling" does not hold when $d\ge 5$.
 Now it is known that the conjecture is false in both directions for $d \geq 3$ \cite{FMM06,Kolountzakis2006,KM2006,Matolcsi2005}.   However, the conjecture is still open in lower dimensions ($d=1, 2$).  On the other hand, Iosevich, Katz and Tao   \cite{Iosevich-Katz-Tao2003} proved that  Fuglede's conjecture is true for convex planar sets. The non-convex case is considerably more complicated, and is not understood even in dimension $1$.
  Lagarias and Wang \cite{Lagarias-WangInvent,Lagarias-Wang1997} proved that all tilings of $\R$ by a bounded region must be periodic, and that the corresponding translation sets are rational up to affine transformations. This in turn leads to a structure theorem for bounded tiles, which would be crucial for the direction ``Tiling$\Rightarrow $ spectral ".
  Assume that $\Omega\subset \mathbb{R}$ is a finite union of intervals. The conjecture holds when $\Omega$ is a union of two intervals \cite{Laba2001}. If $\Omega$ is a union of three intervals, it is known that ``Tiling$\Rightarrow $ spectral "; and ``Spectral $\Rightarrow$ Tiling"holds with ``an additional hypothesis" \cite{BKKM10,BM11,BM14}.  
  
 The problem
for local fields was considered by the first author of the present paper in \cite{Fan} where among others, is proved the basic Landau
theorem concerning the Beurling density of spectrum.
In this paper, we consider the conjecture restricted  for compact open sets in the   field  $\Qp$ of $p$-adic numbers.  

We shall give a geometric characterization of compact open spectral sets and prove that a compact open set is a spectral set if and only if it tiles $\Q_p$. 
The spectra  and the tiling complements of compact open spectral sets are also investigated. Subject to an isometric transformation of $\Q_p$, the spectra and tiling complements are unique and determined by the set of possible distances of different  points in the compact open spectral set.

Actually, in \cite{FFLS2015}, we prove that the conjecture holds in $\Q_p$ without the  compact open restriction. Moreover, any spectral set is proved to be  a compact open set up to a Haar-null set.

Let us recall some notions and  notation (we refer to \cite{Vladimirov-Volovich-Zelenov1994}). The ring of $p$-adic integers is denoted by $\Zp$ and the Haar measure on $\Q_p$ is denoted by $\mathfrak{m}$ or $dx$. We assume that the Haar measure is normalized so that $\mathfrak{m}(\Zp)=1$. The dual group $\widehat{\Q}_p$ of  $\Q_p$ is isomorphic to $\Q_p$.
Any $x\in \Qp$ can be written as
$$
   x = \sum_{n=v_p(x)}^\infty a_n p^n    \quad (v_p(x) \in \mathbb{Z}, a_n \in\{0, 1, 2, \cdots, p-1\} \hbox{ and  }a_{v_p(x)}\neq 0).
$$
Here, the integer $v_p(x)$ is called the {\em $p$-valuation} of $x$.
The fractional part $\{x\}$ of $x$ is defined to be $ \sum_{n=v_p(x)}^{-1} a_n p^n$. We fix
the following  character $\chi \in \widehat{\Q}_p$:
$$
      \chi(x) = e^{2\pi i\{x\}}.
$$
Notice that $\chi$ is equal to $1$ on $\Zp$
but is non-constant on $p^{-1}\Zp$. For any $y\in \Qp$, we define
$$\chi_y(x) = \chi(yx).$$
Then the map $y \mapsto \chi_y$ from
$\Qp$ onto $\widehat{\Q}_p$ is an isomorphism.
%
%

\begin{figure}[H]
  \centering
  \includegraphics[width=0.9\textwidth]{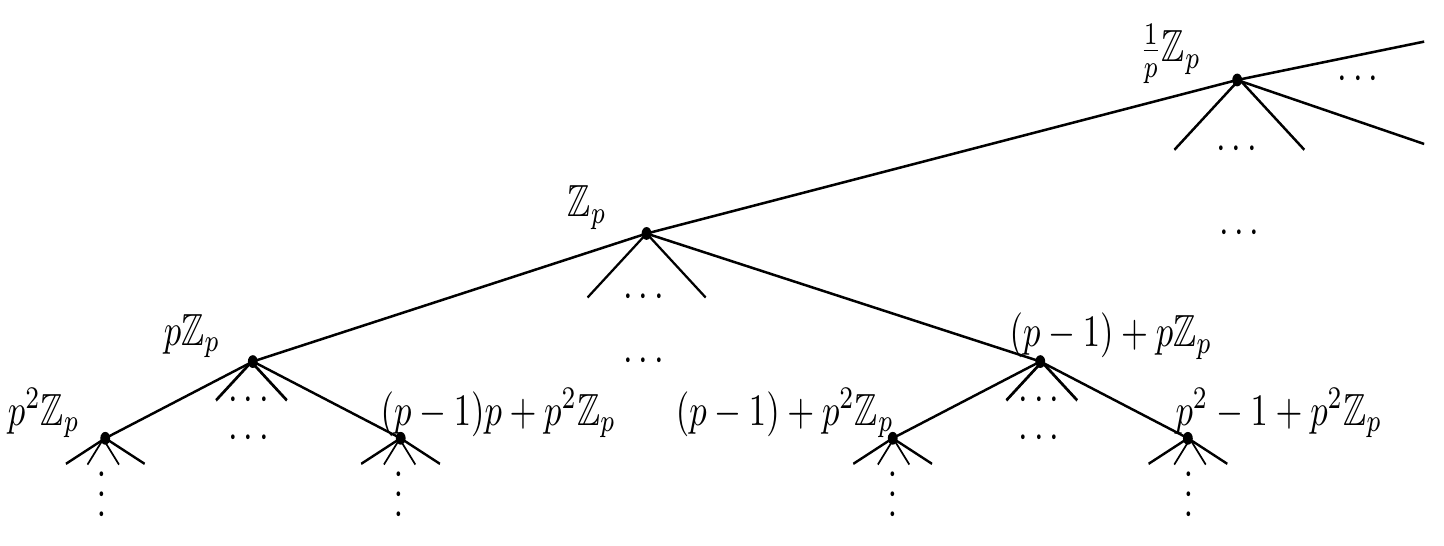}\\
  \caption{Consider $\Q_p$ as an infinite tree.}
\label{Qp}
\end{figure}

In order to state our main result, we consider the field $\mathbb{Q}_p$ as an infinite tree
$(\mathcal{T}, \mathcal{E})$. The set of vertices $\mathcal{T}$ is the set of all balls
in $\mathbb{Q}_p$. The set of edges $\mathcal{E}$ is the subset of $\mathcal{T}\times \mathcal{T}$ consisting of pairs $(B', B) \in \mathcal{T}\times \mathcal{T}$ such that
$$
    B' \subset B, \qquad |B|=p |B^\prime|,
$$
where $|B|$ denotes the Haar measure of the ball $B$. 
This fact will be denoted by $B' \prec B$. We call $B'$ a {\em descendant} of $B$, and $B$ the {\em parent} of $B'$.

Any bounded open set $O$ of $\mathbb{Q}_p$ can be described by a subtree $(\mathcal{T}_O, \mathcal{E}_O)$ of $(\mathcal{T}, \mathcal{E})$.
In fact, let $B^*$ be the smallest ball containing $O$, which
 will be the root of the tree. For any given ball $B$ contained in $O$,
 there is a unique sequence of balls $B_0, B_1, \cdots, B_r$ such that
 $$
    B=B_0 \prec B_1 \prec B_2 \prec \cdots \prec B_r = B^*.
 $$
 We assume that the set  of vertices $\mathcal{T}_O$ is composed of all such balls
  $B_0, B_1, \cdots, B_r$ for all possible balls $B$ contained in $O$.
  The set of edges $\mathcal{E}_O$ is composed of all edges $B_i \prec B_{i+1}$
  as above.
  \begin{figure}[H]
  \centering
  \includegraphics[width=1\textwidth]{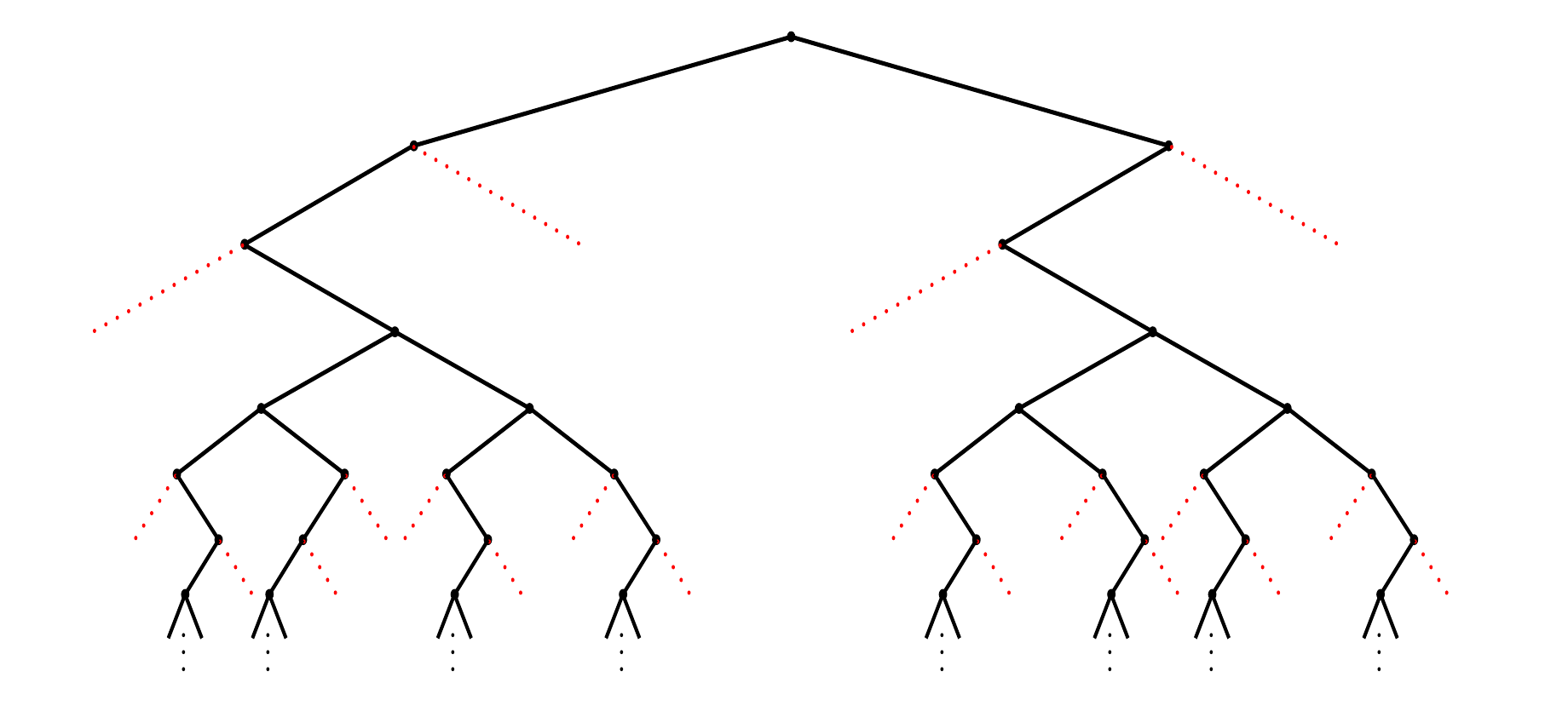}\\
  \caption{For $p=2$, a $p$-homogeneous tree.}
\label{TIJ1}
\end{figure}

A subtree $(\mathcal{T}', \mathcal{E}')$ is said to be {\em homogeneous} if the number of descendants of $B\in \mathcal{T}'$ depends only on $|B|$. If this number is either
 $1$ or $p$, we call $(\mathcal{T}', \mathcal{E}')$ a  {\em $p$-homogeneous} tree.

 A bounded open set is said to be {\em homogeneous} (resp. {\em $p$-homogeneous}) if the corresponding tree
is homogeneous (resp. $p$-homogeneous).

Any compact open set can be described by a finite tree, because a compact open set
 is a disjoint finite union of balls of same size. In this case, as in the above construction of subtree
 we only consider these balls of same size as $B$.
\begin{figure}[H]
  \centering
  \includegraphics[width=0.9\textwidth]{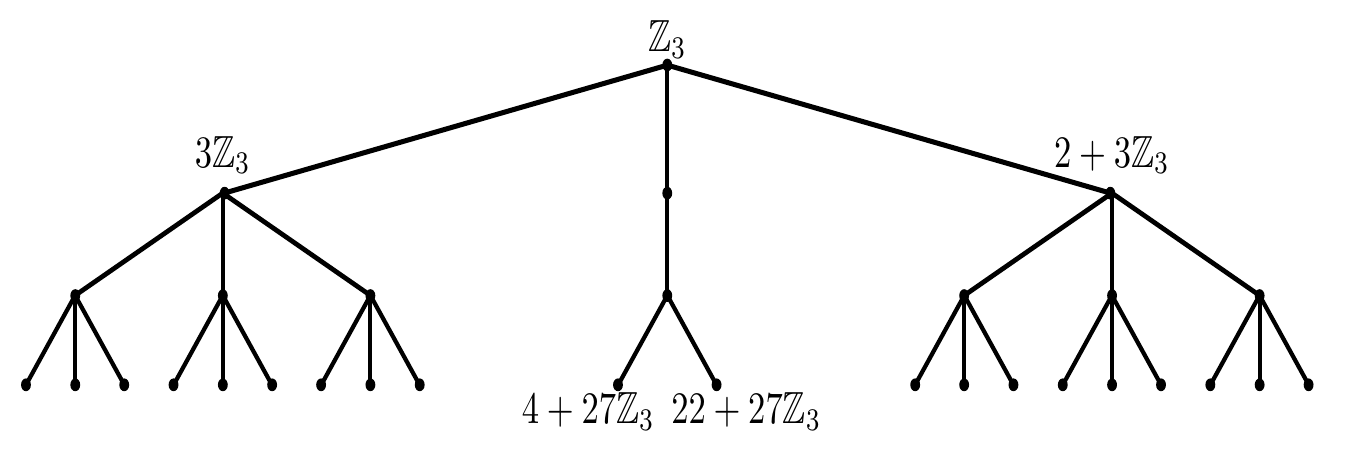}\\
  \caption{Consider the compact open set $O=3\Z_3\sqcup 3\Z_3 \sqcup 4+27\Z_3\sqcup  22+27\Z_3$  as a finite tree.}
\label{compactopen}
\end{figure}

We shall prove that  the Fuglede conjecture holds in $\Q_p$ among  compact open sets and that spectral sets are characterized by their $p$-homogeneity.

Notice that an open compact set $\Omega$ can be written as $\bigsqcup _{c\in C} (c+p^\gamma \Zp)$ for some finite set $C\subset \Qp$ and some integer $\gamma \in \Z$. As we shall see  in Section \ref{spectraltohomo}, for such a set $\Omega$ to be spectral with $\Lambda$ as spectrum if
$$
\forall \lambda,\lambda' \in \Lambda, \lambda\not= \lambda', \sum_{c\in C} \chi(-c(\lambda-\lambda'))=0 \hbox{ and  }\sharp(\Lambda \cap B(0,p^\lambda))=\sharp C.
$$
So we are led to study the trigomometric polynomial $\sum_{c\in C} \chi(ct)$.

\begin{theorem} \label{main}Let $\Omega $ be a compact open set in $\mathbb{Q}_p$. The following statements
 are equivalent: \\
  \indent {\rm (a)} \ $\Omega$ is a spectral set; \\
   \indent {\rm (b)} \ $\Omega$ is $p$-homogeneous; \\
 \indent {\rm (c)} \ $\Omega$ tiles $\Qp$ by translation.
\end{theorem}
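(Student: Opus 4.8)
My plan is to transport the whole statement to the finite cyclic group $\mathbb{Z}/p^{n}\mathbb{Z}$ and then exploit the arithmetic of cyclotomic polynomials. Since the dilations and translations $x\mapsto ax+b$ ($a\in\Qp$, $a\ne 0$) are bijections of $\Qp$ preserving each of (a), (b) and (c), I first normalize so that the smallest ball containing $\Omega$ is $\Zp$ and the constituent balls have radius $p^{-n}$; thus $\Omega=\bigsqcup_{c\in C}(c+p^{n}\Zp)$ with $C$ a subset of $\{0,1,\dots,p^{n}-1\}\cong\mathbb{Z}/p^{n}\mathbb{Z}$, and the $\sharp C$ leaves sit at depth $n$ of the finite tree. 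A direct computation gives $\widehat{1_\Omega}(\xi)=p^{-n}1_{p^{-n}\Zp}(\xi)\,\overline{P_C(\xi)}$ with $P_C(t)=\sum_{c\in C}\chi(ct)$; since $P_C$ is constant on cosets of $\Zp$ and $P_C\equiv\sharp C\ne 0$ on $\Zp$ itself, two characters $\chi_\lambda,\chi_{\lambda'}$ are orthogonal in $L^2(\Omega)$ exactly when $\lambda-\lambda'\in p^{-n}\Zp$ and $P_C(\lambda-\lambda')=0$, and no two spectral frequencies share a coset of $\Zp$. Consequently a spectrum of $\Omega$ descends, modulo $\Zp$, to a finite spectrum $\Lambda_0\subset\mathbb{Z}/p^{n}\mathbb{Z}$ of $C$ with $\sharp\Lambda_0=\sharp C$, and a compact-open tiling of $\Qp$ localizes to a tiling $C\oplus T_0=\mathbb{Z}/p^{n}\mathbb{Z}$ (the coarser scales of $\Qp$ being filled by a fixed set of coset representatives of $\Qp/\Zp$). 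So it suffices to establish the three equivalences inside $\mathbb{Z}/p^{n}\mathbb{Z}$.

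The two implications issuing from (b) I would prove by explicit construction, the decisive elementary fact being that if $C$ is $p$-homogeneous with set of full levels $S\subset\{0,\dots,n-1\}$, then for distinct $c,c'\in C$ the valuation $v_p(c-c')$ equals the level where their branches split, hence lies in $S$, and $\sharp C=p^{|S|}$. For (b)$\Rightarrow$(a) I take $\Lambda=\{\sum_{k\in S}a_k p^{-1-k}:a_k\in\{0,\dots,p-1\}\}\subset\Qp$: for $\lambda\ne\lambda'$ one has $v_p(\lambda-\lambda')=-1-k^{*}$ with $k^{*}=\min\{k\in S:a_k\ne a_k'\}\in S$, so $p^{k^{*}}(\lambda-\lambda')$ has valuation $-1$, and grouping the $p$ equal-mass children of each level-$k^{*}$ node collapses $P_C(\lambda-\lambda')$ to a multiple of $\sum_{j=0}^{p-1}\chi\!\left(j\,p^{k^{*}}(\lambda-\lambda')\right)=\sum_{j=0}^{p-1}e^{2\pi i j/p}=0$; since $\sharp\Lambda=p^{|S|}=\sharp C$ this is a spectrum. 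For (b)$\Rightarrow$(c) I take $T_0=\{\sum_{0\le k<n,\,k\notin S}b_k p^{k}:b_k\in\{0,\dots,p-1\}\}$, so that $\sharp C\cdot\sharp T_0=p^{n}$ and the sum $C+T_0$ is direct because $v_p(c-c')\in S$ while $v_p(t-t')\notin S$; hence $C\oplus T_0=\mathbb{Z}/p^{n}\mathbb{Z}$, which lifts to a tiling of $\Qp$.

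The converses (a)$\Rightarrow$(b) and (c)$\Rightarrow$(b) carry the real weight. Passing to the mask polynomial $M_C(x)=\sum_{c\in C}x^{c}$, the vanishing of $P_C$ at an argument of valuation $-k$ is equivalent to $\Phi_{p^{k}}\mid M_C$, which by the classification of vanishing sums of $p^{k}$-th roots of unity means precisely that $\sharp\{c\in C:c\equiv r\ \bmod p^{k}\}$ depends only on $r\bmod p^{k-1}$. For (c)$\Rightarrow$(b) I factor $M_{\mathbb{Z}/p^{n}\mathbb{Z}}=\prod_{k=1}^{n}\Phi_{p^{k}}$ through $M_C\,M_{T_0}$, deducing $\Phi_{p^{k}}\mid M_C$ or $\Phi_{p^{k}}\mid M_{T_0}$ for each $k$, and I combine this with the identity $\sharp C=p^{|K_C|}$, $K_C=\{k:\Phi_{p^{k}}\mid M_C\}$, that every tile must satisfy. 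For (a)$\Rightarrow$(b) the spectrum gives $\Lambda_0-\Lambda_0$ inside the zero set of $P_C$ together with $\sharp\Lambda_0=\sharp C$. In either case I then induct on $n$ by peeling the root: if $C$ lies in a single residue class mod $p$ the problem drops to $\mathbb{Z}/p^{n-1}\mathbb{Z}$ after dividing by $p$, whereas if $C$ meets several classes the divisibility and cardinality bookkeeping must force full, equal-mass branching at the root.

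The main obstacle is exactly this last rigidity. I must rule out intermediate branching numbers $2,\dots,p-1$ at every node and show that the equal-count conditions coming from $\Phi_{p^{k}}\mid M_C$ (together with $\sharp C=p^{|K_C|}$) propagate all the way down the tree, forcing the branching at each level to be uniformly $1$ or $p$; this is the step where the prime-power structure of $\mathbb{Z}/p^{n}\mathbb{Z}$ and the rigidity of vanishing sums of $p^{k}$-th roots of unity are indispensable, and it is also what ultimately makes the spectrum and the tiling complement unique up to an affine change of variable.
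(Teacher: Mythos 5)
Your architecture matches the paper's: normalize to $\Omega=\bigsqcup_{c\in C}(c+p^{n}\Zp)$ with $C\subset\Z/p^{n}\Z$, prove (b)$\Rightarrow$(a) and (b)$\Rightarrow$(c) by the explicit constructions $\Lambda$ and $T_0$ indexed by the branching levels, and drive both converses through the divisibility $\Phi_{p^{k}}\mid M_C$ and the Schoenberg--R\'edei description of vanishing sums of $p^{k}$-th roots of unity. Those parts are sound, and your remark that $P_C(\xi)=0$ with $v_p(\xi)=-k$ is \emph{equivalent} to $\Phi_{p^{k}}\mid M_C$ (because $\Phi_{p^{k}}$ is the minimal polynomial of every primitive $p^{k}$-th root of unity) neatly absorbs the paper's separate rotation-invariance lemma (Lemma \ref{multi}).

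However, the proof is not complete, and the missing piece is the one that carries the weight of both hard implications. The rigidity statement --- if $\sharp C\le p^{m}$ and $\Phi_{p^{k_j}}\mid M_C$ for $m$ distinct exponents $k_1<\cdots<k_m$, then $\sharp C=p^{m}$ and the tree of $C$ branches $p$-fold with equal mass exactly at levels $k_j-1$ and nowhere else --- is precisely what your final paragraph labels ``the main obstacle'' and then does not prove. This is the paper's Theorem \ref{keytheorem}; without it neither (a)$\Rightarrow$(b) nor (c)$\Rightarrow$(b) is established. Your bottom-up ``peel the root'' induction is also the awkward direction: when $1\notin K_C$ the cyclotomic conditions say nothing directly about the branching at the root, so you cannot control the root before understanding the subtrees. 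The paper inducts top-down from the finest vanishing level instead: by Lemmas \ref{permu} and \ref{C}, the condition at $k_m$ splits $C$ into $p$-element fibers that are full arithmetic progressions of step $p^{k_m-1}$ modulo $p^{k_m}$; these fibers collapse to single points modulo $p^{k_{m-1}}$, the next condition applies to the reduced set of size $\sharp C/p$, and iterating yields $p^{m}\mid\sharp C$ together with the tree structure. A second, smaller gap: you assert that a spectrum of $\Omega$ descends to a spectrum $\Lambda_0$ of $C$ with $\sharp\Lambda_0=\sharp C$. Orthogonality alone only gives the upper bound $\sharp(\Lambda\cap B(a,p^{n}))\le\sharp C$; the equality uses the completeness half of the hypothesis and needs an argument, e.g.\ integrating the Parseval identity over a ball of radius $p^{n}$ as in the paper's Lemma \ref{number}.
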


For any  subset $\Omega\subset \Q_p$, the {\em set of admissible $p$-orders} of $\Omega$ is defined by $$I_{\Omega}:=\{i\in \Z:  \exists \  x,y \in \Omega \text { such that } v_p(x-y)=i  \}.$$
 Remark that  $p^{-I_{\Omega}}$ is the set of possible distances of 
different points in $\Omega$.

 Assume that $\Omega$ is a  $p$-homogeneous  compact open set. 
By the definition of $I_\Omega$, an integer $i\in I_{\Omega}$ if and only if  the balls of radius $p^{-i}$ in the tree $\mathcal{T}_{\Omega}$  has $p$ descendants.
And  there is an integer  $\gamma$ such that $i\in I_{\Omega}$ if $i\geq \gamma$. This is the reason why could   a compact  open set  be described by  a finite tree. 


On the other hand, it is of interest to investigate the structures of the spectra and the tiling complements of $\Omega$. We obtain  that the  spectra and  the tiling complements of $\Omega$ are uniquely determined by  the  set $I_{\Omega}$,  but subject to an isometric transformation  of $\Q_p$.

Set $\Z/ p\Z \cdot p^i :=\{ap^i: 0\le a \le p-1, a\in \mathbb{N} \}\subset \Qp$. 
Recall that  the addition of two subsets $A$ and $B$ in $\Q_p$ is defined by 
  $$A+B:=\{a+b: a\in A,b\in B \}.$$ 
  Let $\{A_i: i \in I\}$ be a family of subsets in $\Q_p$ such that all $A_i$ contain $0$.
 We  define 
$$
\sum_{i\in I} A_i:=\left\{\sum_{i\in J}a_i:J\subset I \text{  finite and }  a_i\in A_i  \right\}.
$$
\begin{theorem}\label{spetrumandtranslate}
Let  $\Omega$ be a  $p$-homogeneous compact open set in  $\Q_p$ with admissible $p$-order  set $I_{\Omega}$. 

 \indent {\rm (a)} Subject to an isometric bijection of $\Q_p$, 
 $$
 \Lambda=\sum_{\substack{i\in I_\Omega}} \Z/ p\Z \cdot p^{-i-1}
 $$
 is the unique spectrum of $\Omega$. \\
 \indent {\rm (b)} Subject to an isometric bijection of $\Q_p$, 
 $$
 T=\sum_{i\notin I_\Omega} \Z/ p\Z \cdot p^{i}
 $$
 is the unique  tiling complement of $\Omega$.
\end{theorem}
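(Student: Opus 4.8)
The plan is to put $\Omega$ in a normal form determined by $I_{\Omega}$, to verify the two displayed pairs by explicit computations with $p$-adic digits, and then to treat the uniqueness, which is the delicate point.

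First I would reduce to a standard representative. An isometric bijection of $\Qp$ preserves $|x-y|_p$, hence preserves the tree of balls and the admissible order set, and conversely a level-preserving automorphism of the ball tree is induced by such an isometry. Since a $p$-homogeneous compact open set is, by homogeneity, determined by its branching pattern, and this pattern is exactly recorded by $I_{\Omega}$ (the tree branches into $p$ at level $i$ precisely when $i\in I_{\Omega}$), $\Omega$ is carried by an isometry onto the standard set
\[
\Omega_0=\{x\in\Qp:\ a_i(x)=0\ \text{for every}\ i\notin I_{\Omega}\},
\]
where $a_i(x)\in\{0,\dots,p-1\}$ is the $i$-th digit of $x$. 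One checks that $\Omega_0=C+p^{\gamma}\Zp$ with $C=\sum_{i\in I_{\Omega},\,i<\gamma}\Z/p\Z\cdot p^{i}$ and that $I_{\Omega_0}=I_{\Omega}$, so it suffices to establish both statements for $\Omega_0$, the displayed $\Lambda$ and $T$ being written in the same digit coordinates.

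For the tiling assertion (b) I would argue directly. The set $\Omega_0$ is built from the digits at the positions in $I_{\Omega}$ and $T$ from the digits at the complementary positions $I_{\Omega}^{c}$; because these two supports are disjoint, every sum $\omega+t$ with $\omega\in\Omega_0$ and $t\in T$ is carry-free. Thus splitting the digits of an arbitrary $x\in\Qp$ according to $I_{\Omega}$ and $I_{\Omega}^{c}$ gives the unique decomposition $x=\omega+t$, which is exactly the statement that $(\Omega_0,T)$ is a tiling pair. For the spectral assertion (a) I would first compute $\int_{\Omega_0}\chi(\mu x)\,dx$ and record that it vanishes precisely when $v_p(\mu)\in\{-i-1:i\in I_{\Omega}\}$, the vanishing coming from a full geometric sum $\sum_{a=0}^{p-1}e^{2\pi ia/p}=0$ at the relevant branching level. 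Since any two distinct $\lambda,\lambda'\in\Lambda$ have all their digits on the positions $\{-i-1:i\in I_{\Omega}\}$, the valuation $v_p(\lambda-\lambda')$ equals the lowest position where their digits differ and hence lies in that set; this gives orthogonality. Completeness would then follow from the Landau density theorem of \cite{Fan}: a direct count shows that the Beurling density of $\Lambda$ equals $\mathfrak m(\Omega_0)$, which for an orthonormal family of characters forces it to be a Hilbert basis; alternatively one checks the Parseval identity $\sum_{\lambda\in\Lambda}|\widehat{1_{\Omega_0}}(\xi-\lambda)|^{2}=\mathfrak m(\Omega_0)^{2}$ by the same digit bookkeeping.

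The hard part is the uniqueness ``subject to an isometric bijection''. Given an arbitrary spectrum $\Lambda'$, I would first translate it so that $0\in\Lambda'$ (a translation of $\widehat{\Qp}$ is an isometry, and multiplying all characters by $\overline{\chi_{\lambda_0}}$ preserves orthonormal bases). Orthogonality then forces every pairwise valuation of $\Lambda'$ into $\{-i-1:i\in I_{\Omega}\}$, so the ball tree of $\Lambda'$ branches only at these levels, while the density identity forces that branching to be full; hence $\Lambda'$ has the same homogeneous tree as $\Lambda$. A tiling complement $T'$ is treated by the parallel argument: disjointness of the translates forces the pairwise valuations of $T'$ into $I_{\Omega}^{c}$, and the covering condition forces full branching there, so $T'$ has the same tree as $T$. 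The remaining and most technical step, which I expect to be the main obstacle, is to upgrade such a tree isomorphism to a genuine isometry of $\Qp$: one chooses, level by level, a digit-matching bijection of the $p$ descendants and checks that these choices assemble consistently into an isometry, the delicate point being the behaviour in the unbounded ``deep'' direction where $I_{\Omega}$ (resp. $I_{\Omega}^{c}$) is infinite.
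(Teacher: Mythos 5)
Your overall architecture coincides with the paper's: reduce both statements to the claim that every spectrum (resp.\ tiling complement) of $\Omega$ is a $p$-homogeneous discrete set with admissible order set $-(I_\Omega+1)$ (resp.\ $\Z\setminus I_\Omega$), and then classify such sets up to isometry of $\Qp$ (this last step is exactly Lemma~\ref{iso2} and Proposition~\ref{iso3}; you correctly flag it as real work but do not carry it out). However, two steps as written are genuinely gapped. The opening reduction is invalid: an isometric bijection of $\Qp$ carrying $\Omega$ onto the digit-normal form $\Omega_0$ is in general not additive, so it transforms neither spectral pairs nor tiling pairs into spectral or tiling pairs; hence establishing the two statements ``for $\Omega_0$'' does not establish them for $\Omega$. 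Your existence computations (the carry-free decomposition $x=\omega+t$ and the product formula for $\widehat{1_{\Omega_0}}$) genuinely use the special digit structure of $\Omega_0$ and do not transfer. The paper instead runs the orthogonality/Parseval computation for an arbitrary $p$-homogeneous $\Omega$ (Section~\ref{HomogeneitySpectral}), using only that $v_p(c-c')\in I_\Omega$ together with the rotation invariance of Lemma~\ref{multi}; alternatively, existence of an isometric copy of $\Lambda$ among the spectra follows from Theorem~\ref{main} plus your uniqueness step, which does not need the normalization. Relatedly, the Landau-density route to completeness is not available: the density theorem of \cite{Fan} is a necessary condition on spectra, not a sufficient condition for an orthonormal family of characters to be complete; your fallback (verifying $\sum_{\lambda}|\widehat{1_\Omega}(\xi-\lambda)|^2=|\Omega|^2$) is the correct route and is what the paper does via Lemma~\ref{Thm-SpectralMeasure}.

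The second gap is in part (b): the assertion that ``disjointness of the translates forces the pairwise valuations of $T'$ into $\Z\setminus I_\Omega$'' is false as a standalone implication. Disjointness of $\Omega+t$ and $\Omega+t'$ says only that $t-t'\notin (C-C)+p^\gamma\Zp$, and this set need not contain every $x$ with $v_p(x)\in I_\Omega$. For instance with $p=3$, $\gamma=2$, $C=\{0,2,4\}$ (a $p$-homogeneous set with $I_\Omega=\{0\}\cup\{i\ge 2\}$) one has $(C-C)+9\Z_3\equiv\{0,2,4,5,7\}\pmod 9$, so $1\notin(C-C)+9\Z_3$ although $v_3(1)=0\in I_\Omega$; two translates at distance of the ``wrong'' valuation can thus be disjoint. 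What excludes such pairs from an actual tiling complement is the full tiling identity $1_C*1_S=1$, hence $\widehat{1_C}\cdot\widehat{1_S}=p^\gamma\delta_0$, which forces the zeros of $\widehat{1_S}$ at the points $p^\ell$ to complement those of $\widehat{1_C}$; Theorem~\ref{keytheorem} (resting on Lemma~\ref{root}) then gives that $S$ is $p$-homogeneous with branching set exactly $J$. This is the paper's Theorem~\ref{Z/Zp}, $(5)\Rightarrow(4)\Rightarrow(1)$, and it cannot be replaced by the pairwise-disjointness argument. Your corresponding step in part (a) is sound: orthogonality does force $v_p(\lambda-\lambda')\in-(I_\Omega+1)$ (again via Lemma~\ref{root}), and the counting of Lemma~\ref{number} combined with Lemma~\ref{estimate} forces full branching, as in Theorem~\ref{fulldiscrete}(a).
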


It is clear that if 
$\Omega$ is a spectral set with $\Lambda$ as spectrum, then so are its translates $\Omega + a$ ($a\in \mathbb{Q}_p$) with spectrum $\Lambda$
and its dilations $b \Omega $ ($b \in \mathbb{Q}_p^*$) with spectrum $b^{-1}\Lambda$. It is also true that the translation and the
dilation don't change the tiling property and the homogeneity.
Since $\Omega$ is compact open, by scaling and  translation, we may assume that
$\Omega\subset \Zp$ and $0\in\Omega.$  So it can be represented as a disjoint union of balls of same size
 $$\Omega=\bigsqcup_{c\in C}\left( c+ p^\gamma \Zp\right),$$ where $\gamma$ is a nonnegative integer and  $C\subset  \{0,1, \cdots,  p^{\gamma}-1\}$.


 For each $0\leq n\leq \gamma$ , denote   by
 $$C_{\!\!\!\!\!\! \mod{p^n}} :=\{x\in \{0,1,\dots,p^n-1 \}: \exists~ y \in C, \text{ such that }  x\equiv y ~(\!\!\!\!\!\! \mod{p^n}) \}$$  the subset of  $\Z/p^n\Z$ determined by $C$ modulo $p^n$.

We also obtained  the following characterization of spectral sets in the finite group  $\mathbb{Z}/p^\gamma \mathbb{Z}$.

\begin{theorem}\label{1.3}
	Let $C\subset \mathbb{Z}/p^\gamma \mathbb{Z}$. The following statements are equivalent: \\
  \indent {\rm (a)} \ $C$ is a spectral set in $\mathbb{Z}/p^\gamma \mathbb{Z}$;\\
  \indent {\rm (b)} \	$C$ is a tile of $\mathbb{Z}/p^\gamma \mathbb{Z}$;\\
  \indent {\rm (c)} \	 For any $ n=1, 2, \cdots, \gamma -1$,
   $\sharp (C_{\!\!\!\! \mod{p^n}})=p^{k_n}$ for some integer $k_n\in \mathbb{N}$, where  $\sharp(C_{\!\!\!\! \mod{p^n}})$ is the cardinality of the finite set  $C_{\!\!\!\! \mod{p^n}}$.
\end{theorem}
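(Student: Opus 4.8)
The plan is to deduce Theorem \ref{1.3} from Theorem \ref{main} by means of the natural dictionary between subsets of $\Z/p^\gamma\Z$ and compact open subsets of $\Zp$. Realising $C$ through its representatives in $\{0,1,\dots,p^\gamma-1\}$, I associate to it the compact open set
\[
\Omega_C:=\bigsqcup_{c\in C}\bigl(c+p^\gamma\Zp\bigr)\subset\Zp .
\]
I would then establish three links: (i) condition (c) holds for $C$ if and only if $\Omega_C$ is $p$-homogeneous; (ii) if $C$ tiles $\Z/p^\gamma\Z$ then $\Omega_C$ tiles $\Qp$; and (iii) if $C$ is spectral in $\Z/p^\gamma\Z$ then $\Omega_C$ is spectral in $\Qp$. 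Combined with direct constructions of a finite spectrum and a finite tiling complement out of (c), and with Theorem \ref{main} applied to $\Omega_C$, these links close the logical loop and yield (a)$\Leftrightarrow$(b)$\Leftrightarrow$(c).

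The combinatorial equivalence (i) is the crux, and I would treat it first. Writing $n_\ell:=\sharp(C_{\bmod p^\ell})$, this integer equals the number of balls of radius $p^{-\ell}$ meeting $\Omega_C$, i.e. the number of depth-$\ell$ vertices of the tree $\mathcal{T}_{\Omega_C}$; since each such vertex has between $1$ and $p$ children, $n_\ell\le n_{\ell+1}\le p\,n_\ell$. The key elementary point is a \emph{uniform branching lemma}: if $n_\ell$ and $n_{\ell+1}$ are both powers of $p$, then $n_{\ell+1}/n_\ell\in\{1,p\}$, and since $n_{\ell+1}$ is a sum of $n_\ell$ integers each in $\{1,\dots,p\}$, those integers must all equal $1$ or all equal $p$; hence every depth-$\ell$ vertex has the same number of children. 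Applying this across all levels shows that condition (c)---all projection counts being powers of $p$---is equivalent to uniform branching at every level, that is, to $\Omega_C$ being $p$-homogeneous. Here it is convenient to include the top level $\ell=\gamma$, i.e. to record that $\sharp C$ is a power of $p$; this is automatic once $C$ tiles (then $\sharp C\mid p^\gamma$) or once $C$ is spectral. Conversely $p$-homogeneity makes each $n_\ell$ a product of factors in $\{1,p\}$, hence a power of $p$.

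For the forward transfers, a tiling $C\oplus T_0=\Z/p^\gamma\Z$ lifts at once: with representatives $\widetilde{T_0}\subset\{0,\dots,p^\gamma-1\}$ one has $\Omega_C\oplus\widetilde{T_0}=\Zp$, and appending the standard complement $\sum_{i<0}\Z/p\Z\cdot p^{i}$ of $\Zp$ in $\Qp$ gives a tiling complement of $\Omega_C$. For spectra, a finite spectrum $\Lambda_0$ of $C$ (so $\sharp\Lambda_0=\sharp C$, with $\sum_{c\in C}\chi(-c\,p^{-\gamma}(\xi-\xi'))=0$ for distinct $\xi,\xi'\in\Lambda_0$) is combined with a spectrum of the single ball $p^\gamma\Zp$ to form $\Lambda=p^{-\gamma}\Lambda_0+\sum_{i\ge\gamma}\Z/p\Z\cdot p^{-i-1}$. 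Conversely, the implications (c)$\Rightarrow$(a) and (c)$\Rightarrow$(b) are obtained by reading finite data off the homogeneous structure: condition (c) makes $I_\Omega\cap[0,\gamma-1]$ well defined, and the finite parts of the sets in Theorem \ref{spetrumandtranslate}, namely $\Lambda_0=\sum_{i\in I_\Omega,\,i<\gamma}\Z/p\Z\cdot p^{\gamma-i-1}$ and $T_0=\sum_{i\notin I_\Omega,\,0\le i<\gamma}\Z/p\Z\cdot p^{i}$, furnish a spectrum and a tiling complement of $C$ in $\Z/p^\gamma\Z$. The loop then closes: by (ii), (iii) and Theorem \ref{main}, each of (a) and (b) for $C$ forces $\Omega_C$ to be $p$-homogeneous, hence (c), while (c) returns to (a) and (b) by the constructions just given.

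The main obstacle I anticipate is the spectral transfer (iii). Orthogonality of the lifted system $\{\chi_\eta\}_{\eta\in\Lambda}$ is immediate from the finite orthogonality together with orthogonality of the ball's own spectrum, but its \emph{completeness} in the infinite-dimensional space $L^2(\Omega_C)$ is the delicate point. I would dispatch it through the orthogonality-plus-counting criterion for spectrality of compact open sets recalled just before Theorem \ref{main}, namely by verifying the density condition $\sharp(\Lambda\cap B(0,p^{\lambda}))=\sharp C$ for the infinite set $\Lambda$; this is precisely where the $p$-homogeneity of $\Omega_C$, via the explicit form in Theorem \ref{spetrumandtranslate}, is genuinely used.
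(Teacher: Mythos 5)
Your overall strategy---pass to $\Omega_C=\bigsqcup_{c\in C}(c+p^\gamma\Zp)$ and transfer spectrality, tiling and homogeneity between $\Z/p^\gamma\Z$ and $\Qp$---is exactly the paper's (its Section 4 proves a six-fold equivalence of which Theorem \ref{1.3} is a part; your links (ii) and (iii) correspond to Corollary \ref{c} and Lemma \ref{spectralmeasure}). Two of your ingredients are sound, and one is nicer than the paper's: the ``uniform branching lemma'' gives a purely combinatorial proof that all projection counts $\sharp(C\bmod p^n)$ being powers of $p$ is equivalent to $p$-homogeneity of the tree, where the paper routes this equivalence through the vanishing conditions $\widehat{1_C}(p^\ell)=0$ and the roots-of-unity machinery of Theorem \ref{keytheorem}. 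For the spectral transfer you correctly isolate completeness as the delicate point and name the right tool (orthogonality plus the counting $\sharp(\Lambda\cap B(a,p^\gamma))=\sharp C$); note, though, that the sufficiency of this criterion is only \emph{stated} informally before Theorem \ref{main}, so you would still need to write out the argument (Bessel's inequality gives $\sum_\lambda|\widehat{1_{\Omega_C}}(\lambda-\xi)|^2\le|\Omega_C|^2$ pointwise, and integrating over balls of radius $p^\gamma$ as in Lemma \ref{number} turns the counting hypothesis into equality almost everywhere, hence everywhere by continuity).

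The genuine gap is the direction (b)$\Rightarrow$(c). You obtain it by lifting a tiling of $\Z/p^\gamma\Z$ to a tiling of $\Qp$ and then citing the implication ``tiles $\Rightarrow$ $p$-homogeneous'' from Theorem \ref{main}. But the paper does not prove that implication independently: Section 3.3 explicitly reduces it to the statement ``$C$ tiles $\Z/p^\gamma\Z$ if and only if $\mathcal{T}_C$ is $p$-homogeneous'', which is established only in Section 4 --- that is, it \emph{is} the equivalence (b)$\Leftrightarrow$(c) you are trying to prove. Your loop is therefore circular at exactly this arrow, and you must supply an independent argument that a tile of $\Z/p^\gamma\Z$ has all projections $\sharp(C\bmod p^n)$ equal to powers of $p$. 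This is not automatic from $\sharp C\mid p^\gamma$: for instance $C=\{0,1,2,5\}\subset\Z/8\Z$ has $\sharp C=4$ yet $\sharp(C\bmod 4)=3$ (and indeed is not a tile), so a real argument is needed. The paper's is Fourier-analytic: $1_C*1_T=1$ gives $\widehat{1_C}\cdot\widehat{1_T}=p^\gamma\delta_0$, so each of the $\gamma$ points $p^0,\dots,p^{\gamma-1}$ lies in the zero set of $\widehat{1_C}$ or of $\widehat{1_T}$; a counting argument (applying Theorem \ref{keytheorem} first to $T$ if necessary) shows that $\widehat{1_C}$ vanishes at at least $\log_p\sharp C$ of these points, and Theorem \ref{keytheorem} then forces $C$ to be $p$-homogeneous. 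Some such argument must be added for your proof to close.
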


As Terence Tao noted on page 3 of \cite{Tao2004}, the equivalence (a)$\Leftrightarrow$(b) is not new, but follows from Laba \cite{Laba2002}, which in turn builds on the work of Coven-Meyerowitz \cite{CM1999}.  However, we prove it by showing that both (a) and (b) are equivalent to (c) which implies $p$-homogeneity of $C$. The $p$-homogeneity is practically checkable. We can use it to describe finite spectral sets (more precisely, the probability  spectral measures  uniformly distributed on  finite sets) in $\Qp$.  A probability Borel measure $\mu$ on $\Q_p$ is called a {\em spectral measure}
if there exists a set $\Lambda \subset \widehat{\Q}_p$ such that
$\{\chi_\lambda\}_{\lambda \in \Lambda}$ forms an  orthonormal basis (i.e. a Hilbert basis) of the $L^2(\mu)$.
Let $F$ be a finite subset  of $\Q_p$. Consider the uniform probability measure on $F$
 defined by
$$\delta_{F} := \frac{1}{\sharp{F}}\sum_{c\in F} \delta_{c},$$
where $\delta_{c} $ is the dirac measure concentrated at the point $c$.
Let
$$  \gamma_F =  \max_{\substack {c,c^{\prime}\in F\\c\neq c^{\prime}} }v_p(c- c^{\prime}),$$
where $v_p(x)$ denotes the $p$-valuation of $x\in \mathbb{Q}_p$. Then $p^{-\gamma_F}$ is  the minimal distance between different points in $F$. 

\begin{theorem} \label{discrete}
The measure $\delta_{F} $ is a spectral measure if and only if for each  integer $\gamma>\gamma_F$,
the compact open set  $\Omega_{\gamma}:=\bigsqcup_{c\in F} B(c,p^{-\gamma})$ is a spectral set.
\end{theorem}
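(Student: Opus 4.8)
The plan is to pivot both spectral properties onto a single combinatorial condition on a finite set $C$ obtained by reducing $F$ modulo its separating level, and then to feed that into Theorems \ref{main} and \ref{1.3}. Since translations and dilations preserve spectrality of a measure and of a compact open set (and shift $\gamma_F$ only by a valuation), I would first normalize so that $F\subset\Zp$ and $0\in F$. I set $N:=\gamma_F+1$, so the balls $B(c,p^{-\gamma})$ are pairwise disjoint exactly when $\gamma\geq N$ and the reduction $F\to\Z/p^N\Z$ is injective; write $C:=F\bmod p^N\subset\Z/p^N\Z$, $m_F(\xi):=\sum_{f\in F}\chi(-\xi f)$ and $m_C(\xi):=\sum_{c\in C}\chi(-\xi c)$. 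The two elementary facts I would record are the mask factorization $\widehat{1_{\Omega_\gamma}}(\xi)=p^{-\gamma}\,1_{B(0,p^\gamma)}(\xi)\,m_F(\xi)$ and the identity $m_F\equiv m_C$ on $p^{-N}\Zp=B(0,p^N)$ (because there $\xi(f-c)\in\Zp$). Orthogonality of a system of characters in $L^2(\delta_F)$, respectively $L^2(\Omega_\gamma)$, is then precisely the vanishing of $m_F$, respectively $\widehat{1_{\Omega_\gamma}}$, on the difference set of the chosen frequencies.

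On the $\Omega_\gamma$ side I would prove $\Omega_\gamma$ spectral $\iff$ $C$ spectral in $\Z/p^N\Z$, uniformly in $\gamma\geq N$. By Theorem \ref{main}, $\Omega_\gamma$ is spectral iff it is $p$-homogeneous; and for $\gamma\geq N$ its tree branches only at levels $\leq\gamma_F$ and consists of single chains below, so its $p$-homogeneity does not depend on $\gamma$ and coincides with the $p$-homogeneity of the tree of $C$ in $\Z/p^N\Z$. A short counting argument (at consecutive levels the average branching lies in $[1,p]$ and equals a power of $p$, hence is $1$ or $p$ for every vertex) shows this is the same as requiring each level-count $\sharp(C\bmod p^{n})$ to be a power of $p$, which is exactly condition (c) of Theorem \ref{1.3}, i.e. $C$ spectral. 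This already collapses ``for each $\gamma>\gamma_F$'' to one statement about $C$.

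The heart of the proof is the equivalence $\delta_F$ spectral $\iff$ $C$ spectral in $\Z/p^N\Z$. One direction is immediate: a spectrum $\Lambda_C\subset p^{-N}\Zp$ of $C$ has difference set inside $p^{-N}\Zp$, where $m_F=m_C$, so $\Lambda_C$ is an orthonormal family in $L^2(\delta_F)$ of cardinality $\sharp C=\sharp F=\dim L^2(\delta_F)$, hence a spectrum. For the converse I would take any spectrum $\Lambda_0$ of $\delta_F$, reduce it modulo $\Zp$ to distinct representatives, and invoke the key \emph{no-deep-zeros lemma}: $m_F$ has no zeros outside $p^{-N}\Zp$. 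The main obstacle is exactly this lemma. I would prove it by writing, for $v_p(\xi)=-M$ with $M\geq N+1$ and $\xi=p^{-M}u$, $u\in\Zp^{\times}$, the value $m_F(\xi)=\sum_{a\in S}\zeta^{a}$ with $\zeta=e^{-2\pi i/p^M}$ and $S=uF\bmod p^M$; since $M-1\geq N>\gamma_F$, the set $F$, hence $S$, is injective modulo $p^{M-1}$, so $S$ meets each fiber of $\Z/p^M\Z\to\Z/p^{M-1}\Z$ at most once. By the structure of vanishing sums of $p^M$-th roots of unity (the only $\Q$-linear relations among distinct powers of $\zeta$ are the fiber relations coming from $\Phi_{p^M}$), such a sum vanishes only when $S$ is a union of full fibers, which is impossible for a nonempty, fiberwise-injective $S$.

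Granting the lemma, $\Lambda_0-\Lambda_0\subset p^{-N}\Zp$; translating $\Lambda_0$ by one of its own points (which corresponds to modulating the basis, hence preserves the spectrum) produces a spectrum of $\delta_F$ contained in $p^{-N}\Zp$. Reducing that spectrum modulo $\Zp$ into $p^{-N}\Zp/\Zp\cong\Z/p^N\Z$ gives distinct representatives (distinctness because $m_C$ equals $\sharp C\neq0$ on $\Zp$), with pairwise differences annihilating $m_C=m_F$, so it is a spectrum of $C$ in $\Z/p^N\Z$. Combining the two equivalences yields $\delta_F$ spectral $\iff$ $C$ spectral $\iff$ $\Omega_\gamma$ spectral for every $\gamma>\gamma_F$, as claimed. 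I expect the routine parts to be the mask identities, the counting lemma, and the bookkeeping with Theorems \ref{main} and \ref{1.3}; the only genuinely delicate step is the no-deep-zeros lemma, whose proof rests on the cyclotomic description of vanishing root-of-unity sums together with the injectivity of $F$ modulo $p^{\gamma_F+1}$.
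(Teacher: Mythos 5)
Your proof is correct, and while it shares the one genuinely non-elementary ingredient with the paper's argument, it is organized along a different route. The shared step is your ``no-deep-zeros lemma'': the paper also confines any spectrum of $\delta_F$ to $B(0,p^{\gamma_F+1})$ by applying the cyclotomic structure theorem (its Lemma \ref{root}) to the orthogonality relation $\sum_{c}\chi(\lambda c)=0$, which is exactly your fiberwise-injectivity argument in $\Z/p^M\Z$. Where you diverge is in how completeness is handled and where the equivalences are anchored. The paper never leaves $\Qp$: it proves $(1)\Rightarrow(2)$ and $(4)\Rightarrow(1)$ by directly verifying the Jorgensen--Pedersen summation criterion of Lemma \ref{Thm-SpectralMeasure} for the explicit pair $(\Omega_\gamma,\Lambda_C+\mathbb{L}_\gamma)$, exploiting the decomposition $B(\xi,p^\gamma)\cap(\Lambda_C+\mathbb{L}_\gamma)=\Lambda_C+\ell_\xi$, and it gets the $\gamma$-independence from $\sharp(C_{\!\!\mod p^\gamma})=\sharp C$ for $\gamma>\gamma_F$. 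You instead pivot both sides onto the single finite set $C=F\bmod p^N$, replace all Parseval-type computations by orthogonality plus dimension counting in the $\sharp F$-dimensional spaces $L^2(\delta_F)$ and $\ell^2(C)$, and get the $\gamma$-independence from the tree picture via Theorems \ref{main} and \ref{1.3} (your direct counting argument for ``level counts are powers of $p$ implies $p$-homogeneity'' is sound, and is in any case supplied by Theorem \ref{Z/Zp}). Your version buys a cleaner, more conceptual completeness argument and makes transparent why the condition is uniform in $\gamma$; the paper's version buys an explicit spectrum $\Lambda_C+\mathbb{L}_\gamma$ and computations that survive passage to the weak limit, which is what Section 7 needs for the singular measures, where dimension counting is unavailable. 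I see no gap in your argument; the only points I would ask you to write out in full are the identification $p^{-N}\Zp/\Zp\cong\Z/p^N\Z$ matching spectra of $\delta_C$ with spectra of $C$ (the paper's Lemma \ref{spectralmeasure}) and the $\Zp$-periodicity of $m_F$ used when you reduce the spectrum modulo $\Zp$.
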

The above theorem provides a criterion of finite spectral set, combining with Theorem \ref{main}. 

\begin{corollary}The measure $\delta_{F} $ is a spectral measure if and only if  
$$\sharp F=p^{\sharp{I_F}}.$$
\end{corollary}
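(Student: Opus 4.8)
The plan is to peel off the two structural theorems already proved and reduce everything to a combinatorial count on the finite tree $\mathcal{T}_{\Omega_\gamma}$. By Theorem~\ref{discrete}, $\delta_F$ is spectral if and only if $\Omega_\gamma=\bigsqcup_{c\in F}B(c,p^{-\gamma})$ is a spectral set for every integer $\gamma>\gamma_F$, and by Theorem~\ref{main} this holds if and only if each $\Omega_\gamma$ is $p$-homogeneous. It therefore suffices to prove the single equivalence
$$\Omega_\gamma \text{ is } p\text{-homogeneous} \iff \sharp F=p^{\sharp I_F},$$
and to observe that its right-hand side does not involve $\gamma$.

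First I would pin down the tree. Since $\gamma>\gamma_F$, the balls $B(c,p^{-\gamma})$ ($c\in F$) are pairwise disjoint and are exactly the leaves of $\mathcal{T}_{\Omega_\gamma}$, so this tree has $\sharp F$ leaves. A short computation with the ultrametric inequality gives the admissible $p$-orders: for $x\in B(c,p^{-\gamma})$ and $y\in B(c',p^{-\gamma})$ with $c\neq c'$ one has $v_p(x-y)=v_p(c-c')$, whereas inside a single leaf every value $\ge\gamma$ is attained; hence $I_{\Omega_\gamma}=I_F\sqcup\{i:i\ge\gamma\}$, with $\min I_F$ equal to the level of the root and $\max I_F=\gamma_F\le\gamma-1$. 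Recalling that a level $i$ carries a vertex with at least two descendants precisely when $i\in I_{\Omega_\gamma}$, the set of levels $i<\gamma$ at which some vertex branches is exactly $I_F$, so there are $\sharp I_F$ of them.

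The core is then a level-by-level count. Writing $N_i$ for the number of vertices of $\mathcal{T}_{\Omega_\gamma}$ at level $i$, each vertex has between $1$ and $p$ descendants, whence $N_i\le N_{i+1}\le pN_i$, with equality $N_{i+1}=N_i$ forced at every non-branching level. Telescoping across the levels expresses $\sharp F=N_\gamma$ as a product of the ratios $N_{i+1}/N_i$, each factor being $1$ at a non-branching level and at most $p$ at one of the $\sharp I_F$ branching levels; hence $\sharp F\le p^{\sharp I_F}$. Equality forces $N_{i+1}=pN_i$ at each branching level and $N_{i+1}=N_i$ at each non-branching one; since a vertex has at most $p$ descendants, the first identity compels every vertex at that level to have exactly $p$ descendants, and the second compels exactly one. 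Thus equality holds if and only if the number of descendants depends only on the level and is always $1$ or $p$, which is exactly $p$-homogeneity. This proves the displayed equivalence; as the condition $\sharp F=p^{\sharp I_F}$ mentions no $\gamma$, it holds for one $\gamma>\gamma_F$ if and only if it holds for all, and chaining the three equivalences yields the corollary.

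I expect the only genuine content to lie in the counting step: upgrading the inequality $\sharp F\le p^{\sharp I_F}$ to a characterization of its equality case, and recognizing that saturation of the count (each branching vertex attaining the maximal degree $p$) is precisely $p$-homogeneity rather than a mere numerical coincidence. The reductions through Theorems~\ref{discrete} and~\ref{main} and the valuation computation of $I_{\Omega_\gamma}$ should be routine.
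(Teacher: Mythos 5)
Your proof is correct and follows essentially the route the paper intends: reduce via Theorem~\ref{discrete} and Theorem~\ref{main} to the $p$-homogeneity of $\Omega_\gamma$, and then identify $p$-homogeneity with the numerical condition $\sharp F=p^{\sharp I_F}$. Your level-by-level count is just a self-contained re-derivation of the paper's Lemmas~\ref{estimate} and~\ref{2.13} (the inequality $\sharp F\le p^{\sharp I_F}$ together with its equality case), so there is no substantive difference in approach.
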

Moreover, we are interested in finding more  spectral measures. And we provide a class of Cantor spectral  measures.
\begin{theorem}\label{1.6}
There exists a class of singular spectral measures in $\Qp$.
\end{theorem}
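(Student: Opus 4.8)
The plan is to produce the desired measures as infinite convolutions of discrete measures, in the spirit of the Jorgensen--Pedersen construction of spectral Cantor measures, but exploiting the fact that in $\Qp$ the relevant Fourier products are locally finite and the homogeneity criterion of Theorem \ref{main} makes orthogonality transparent. First I would fix a strictly increasing sequence of integers $\gamma_1<\gamma_2<\cdots$ and, for each $k$, a digit set $D_k\subset\{0,1,\dots,p-1\}$ giving the uniform probability measure $\nu_k:=\delta_{D_k\cdot p^{\gamma_k}}$, and form $\mu=\ast_{k=1}^\infty\nu_k$. Convergence in the weak-$*$ topology holds because $\nu_k$ is supported in $p^{\gamma_k}\Zp$ and $\gamma_k\to\infty$, so the tail convolutions concentrate near $0$. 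Its Fourier transform is the infinite product $\widehat{\mu}(\xi)=\prod_{k=1}^\infty m_{D_k}(p^{\gamma_k}\xi)$, with mask $m_{D_k}(t)=\frac{1}{\sharp D_k}\sum_{d\in D_k}\chi(dt)$.

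Next I would arrange that $\mu$ is singular and, in the relevant cases, self-similar. Choosing the gaps $\gamma_{k+1}-\gamma_k$ large (lacunary) forces the support of $\mu$ to be a Cantor set of Haar measure zero, whence $\mu$ is singular with respect to $\mathfrak{m}$; the self-similar examples arise by taking a single digit set $D$ and equal gaps $\gamma_k=nk$, so that $\mu$ is the invariant measure of the iterated function system $\{x\mapsto p^n x+d\}_{d\in D}$. The candidate spectrum is then built as a growing sum mirroring Theorem \ref{spetrumandtranslate}(a): for each $k$ choose $L_k\subset\{0,1,\dots,p-1\}$ so that the matrix $\bigl[\chi(d\ell/p)\bigr]_{d\in D_k,\,\ell\in L_k}$ is unitary (a $p$-adic Hadamard triple), and set $\Lambda=\sum_{k=1}^\infty L_k\, p^{-\gamma_k-1}$, the set-sum being exactly the one defined before Theorem \ref{spetrumandtranslate}.

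Orthogonality of $\{\chi_\lambda\}_{\lambda\in\Lambda}$ in $L^2(\mu)$ is the easy part: for $\lambda\neq\lambda'$ the inner product equals $\widehat{\mu}(\lambda-\lambda')$, which factors through a single finite-level mask $m_{D_k}$ evaluated at a point where the Hadamard relation forces it to vanish, exactly as in the finite case treated in the proof of Theorem \ref{spetrumandtranslate}. The hard part will be \emph{completeness} of this orthonormal system. Following the standard Jorgensen--Pedersen/Strichartz scheme I would study the Bessel function $Q(\xi)=\sum_{\lambda\in\Lambda}|\widehat{\mu}(\xi+\lambda)|^2$, show that it is well defined and locally constant, and prove $Q\equiv 1$. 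In $\Qp$ this is cleaner than over $\R$: since every character is constant on a sufficiently small ball, $\widehat{\mu}$ is genuinely a finite product when restricted to any ball, and the refinement identity $\widehat{\mu}(\xi)=m_{D_1}(p^{\gamma_1}\xi)\,\widehat{\mu'}(\xi)$ together with the unitarity of the Hadamard matrices yields a self-similarity relation of the form $Q(\xi)=Q(p^{-1}\xi)$, which one evaluates at a point where $Q$ is manifestly $1$.

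The only genuine subtlety is controlling a possible loss of mass as the level tends to infinity; this is precisely what the lacunarity imposed in the construction rules out, and it is here that I expect the main technical work to lie. Combining the identity $Q\equiv 1$ with this control gives the Parseval relation $\sum_{\lambda\in\Lambda}|\langle f,\chi_\lambda\rangle|^2=\|f\|_{L^2(\mu)}^2$, so $\Lambda$ is a spectrum of $\mu$. This produces the advertised class of singular spectral measures, among which the equal-gap case supplies the self-similar examples.
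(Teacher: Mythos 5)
Your overall strategy --- realize the measure as an infinite convolution/weak limit supported on a $p$-homogeneous Cantor set of Haar measure zero, take a spectrum of the form $\Lambda=\sum_k L_k\,p^{-\gamma_k-1}$, and verify the criterion $\sum_{\lambda\in\Lambda}|\widehat{\mu}(\lambda-\xi)|^2\equiv1$ of Lemma \ref{Thm-SpectralMeasure} --- is essentially the paper's: there $\mu_{I,J}$ is the weak limit of $\frac{1}{|\Omega_\gamma|}\mathfrak{m}|_{\Omega_\gamma}$ for a nested sequence of $p$-homogeneous compact open sets, with spectrum $\Lambda=\{\sum_{i\in I}b_ip^{-i-1}\}$. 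One remark on the construction itself: for $p$ prime and single digits $D_k,L_k\subset\{0,1,\dots,p-1\}$, Lemma \ref{root} forces any unitary matrix $\bigl[\chi(d\ell/p)\bigr]_{d\in D_k,\ell\in L_k}$ to be the full $p\times p$ DFT matrix, so your ``Hadamard triples'' degenerate to $D_k=L_k=\{0,\dots,p-1\}$. The class is still nonempty (so the theorem would follow), but it is strictly smaller than the paper's, whose $\mathcal{T}_{I,J}$-trees allow the digit at a $J$-level to depend on the preceding digits.

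The genuine gap is the completeness step, which you correctly identify as the heart of the matter but do not carry out. Two concrete problems. First, the functional equation $Q(\xi)=Q(p^{-1}\xi)$ is not available for a general lacunary sequence $(\gamma_k)$: the measure is then not self-similar, and peeling off one convolution factor relates $Q_\mu$ to the $Q$-function of the tail measure $\mu'=\ast_{k\ge2}\nu_k$ taken over the tail spectrum, not to $Q_\mu$ at a rescaled point; only in the equal-gap case does it collapse to a self-map. Second, the ``loss of mass'' you defer to future technical work is precisely where a proof must do something, and it is not lacunarity that saves you but local constancy of characters: for fixed $\xi$ and $N$ large enough that $\gamma_k\ge -v_p(\xi)$ for all $k>N$, one has $\widehat{\mu_N}(\xi)=\prod_{k>N}m_{D_k}(p^{\gamma_k}\xi)=1$ exactly, whence $Q_\mu(\xi)=Q_{\mu_N}(\xi)\ge|\widehat{\mu_N}(\xi)|^2=1$, and Bessel's inequality (from the already-established orthonormality) gives the reverse bound. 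The paper reaches the same lower bound differently: for $\xi\in B(0,p^{\gamma_0})$ it computes the \emph{finite} sum $\sum_{\lambda\in\Lambda\cap B(0,p^{\gamma_0})}|\widehat{1_{\Omega_\gamma}}(\lambda-\xi)|^2=|\Omega_\gamma|^2$ for every $\gamma\ge\gamma_0$ and lets $\gamma\to\infty$, with Fatou's lemma supplying $\sum_{\lambda\in\Lambda}|\widehat{\mu_{I,J}}(\lambda-\xi)|^2\le1$. Until one of these arguments is actually written out, your proposal establishes orthogonality but not that $\Lambda$ is a spectrum.
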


The article is organized  as follows. In Section 2, we introduce some basic definitions and preliminaries on the field $\Q_p$ of $p$-adic numbers, Fourier analysis on $\Q_p$ and $\mathbb{Z}$-module generated by the $p^n$-th roots of unity.  In Section 3, we prove Theorem \ref{main}. In Section 4, we characterize spectral sets and tiles in the finite group $\Z/p^\gamma \Z.$ Theorem \ref{1.3} is proved there.  Section 5 is devoted to the characterization of spectra and tiling complements. Theorem \ref{spetrumandtranslate} is proved at the end of this section. In Section 6,  we characterize finite spectral sets in $\mathbb{Q}_p$(Theorem \ref{discrete}). In  Section 7, we shall construct  a class of singular spectral measures (Theorem \ref{1.6}) and present two concrete examples .

\section{Preliminaries}
In this section, we present some preliminaries.
Some of them have their own interests, like characterization of spectral measures using Fourier
transform, $\mathbb{Z}$-module generated by the $p^n$-th roots of unity, uniform distribution
of spectrum etc. We start with  recalling  $p$-adic numbers and related notation, and the
computation of the Fourier transform of the indicator function of a compact open set.

\subsection{The field of $p$-adic numbers}
Consider the field $\mathbb{Q}$ of rational numbers and a prime $p\geq 2$.
Any nonzero rational number $r\in \mathbb{Q}$ can be written as
$r =p^v \frac{a}{b}$ where $v, a, b\in \mathbb{Z}$ and $(p, a)=1$ and $(p, b)=1$
(here $(x, y)$ denotes the greatest common divisor of two integers $x$ and $y$).  We define $v_p(r)=v$ and
$|r|_p = p^{-v_p(r)}$ for $r\not=0$ and $|0|_p=0$.
Then $|\cdot|_p$ is a non-Archimedean absolute value on $\Q$. That means\\
\indent (i)  \ \ $|r|_p\ge 0$ with equality only for $r=0$; \\
\indent (ii) \ $|r s|_p=|r|_p |s|_p$;\\
\indent (iii) $|r+s|_p\le \max\{ |r|_p, |s|_p\}$.\\
The field of $p$-adic numbers $\mathbb{Q}_p$ is the completion of $\mathbb{Q}$ under the absolute value 
$|\cdot|_p$. Actually
a typical element of $\mathbb{Q}_p$ is of the form of a convergent series
$$
     \sum_{n= N}^\infty a_n p^{n} \qquad (N\in \mathbb{Z}, a_n \in \{0,1,\cdots, p-1\}, a_{N}\neq 0).
$$
Consider $\mathbb{Q}_p$ as an additive group. Then a non-trivial group character is the following function
$$
    \chi(x) = e^{2\pi i \{x\}},
$$
where $\{x\}= \sum_{n=N}^{-1} a_n p^n$ is the fractional part of $x=\sum_{n=N}^{\infty} a_n p^n$. From this character we can obtain all characters $\chi_y$ of $\mathbb{Q}_p$, which are defined by
$\chi_y(x) =\chi(yx)$.
Remark that  each $\chi_y(\cdot)$  is uniformly locally  constant,  i.e.
$$\chi_y(x)=\chi_y(x^{\prime}), \hbox{ if } |x-x^{\prime}|_p\leq  \frac{1}{|y|_p} . $$ The interested readers are referred  to \cite{Taibleson1975,Vladimirov-Volovich-Zelenov1994}  for  further information about characters of $\Q_p$.

\medskip

\noindent Notation:
\\ \indent
$\Zp^\times := \Zp\setminus p\Zp=\{x\in \Qp: |x|_p=1\}$.
It is the group of units of $\Zp$.


$B(0, p^{n}): = p^{-n} \Zp$.  It is the (closed) ball centered at $0$ of radius $p^n$.

$B(x, p^{n}): = x + B(0, p^{n})$. 



$1_A:$ the indicator function of a set $A$.





\subsection{Fourier transformation}

Let $\mu$ be a finite Borel measure on $\Qp$. The {\em Fourier transform} of
$\mu$ is classically defined to be
$$
    \widehat{\mu} (y)
    = \int_{\Qp} \overline{\chi}_y(x) d\mu(x) \qquad (y \in \widehat{\Q}_p \simeq \Qp).
$$
The Fourier transform $\widehat{f}$ of $f \in L^1(\Qp)$ is that of $\mu_f$ where $\mu_f$ is the measure
defined by $d\mu_f = f d\mathfrak{m}$.

The following lemma shows that the Fourier transform of the indicator function of a ball centered at $0$
is a function of the same type and the Fourier transform of the indicator function of  a compact open set
is also supported by a ball, and on the ball it is the restriction of a trigonometric polynomial.

 \begin{lemma}\label{FourierIntegral} Let  $\gamma\in \mathbb{Z}$ be an integer. \\
 \indent {\rm (a)}\  We have   $\widehat{1_{B(0, p^\gamma)}}(\xi)= p^\gamma 1_{B(0, p^{-\gamma})} (\xi)$ for all $\xi \in \Qp$.\\
 \indent {\rm (b)}\ If $\Omega = \bigsqcup_j B(c_j, p^\gamma)$ is a finite union of disjoint  balls of the same size, then  
 \begin{align}\label{fourier}
 \widehat{1_\Omega}(\xi) = p^\gamma 1_{B(0, p^{-\gamma})} (\xi) \sum_j \chi(- c_j\xi ).
 \end{align}
 
 \end{lemma}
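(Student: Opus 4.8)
The plan is to reduce everything to the orthogonality relation for characters on the compact subgroup $B(0,p^\gamma)=p^{-\gamma}\Zp$, and then to deduce (b) from (a) by linearity together with the behaviour of the Fourier transform under translation.

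For part (a) I would start directly from the definition
$$\widehat{1_{B(0,p^\gamma)}}(\xi)=\int_{B(0,p^\gamma)}\overline{\chi}(\xi x)\,dx.$$
Since $B(0,p^\gamma)=p^{-\gamma}\Zp$ is a compact subgroup of $\Qp$ and $x\mapsto\overline{\chi}(\xi x)=\overline{\chi}_\xi(x)$ restricts to a character of this group, the integral equals the Haar measure $\mathfrak{m}(p^{-\gamma}\Zp)=p^\gamma$ when this character is trivial on the subgroup, and vanishes otherwise. The key point is thus to decide when $\chi_\xi$ is identically $1$ on $p^{-\gamma}\Zp$. Using the uniform local constancy recalled in the preliminaries, this happens precisely when $|\xi|_p\le p^{-\gamma}$, i.e. when $\xi\in p^\gamma\Zp=B(0,p^{-\gamma})$: for such $\xi$ and any $x\in p^{-\gamma}\Zp$ one has $|\xi x|_p\le 1$, so $\xi x\in\Zp$ and $\{\xi x\}=0$, whence $\chi(\xi x)=1$. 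Conversely, if $|\xi|_p>p^{-\gamma}$ then $\chi_\xi$ is a nontrivial character of $p^{-\gamma}\Zp$ and the integral is $0$. This yields $\widehat{1_{B(0,p^\gamma)}}(\xi)=p^\gamma 1_{B(0,p^{-\gamma})}(\xi)$.

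For part (b), writing $\Omega=\bigsqcup_j B(c_j,p^\gamma)$ with $B(c_j,p^\gamma)=c_j+B(0,p^\gamma)$, I would use linearity of the Fourier transform together with the translation rule
$$\widehat{1_{c_j+B(0,p^\gamma)}}(\xi)=\int\overline{\chi}(\xi x)\,1_{B(0,p^\gamma)}(x-c_j)\,dx=\overline{\chi}(\xi c_j)\,\widehat{1_{B(0,p^\gamma)}}(\xi),$$
obtained from the change of variable $x\mapsto x+c_j$ (Haar measure being translation invariant) and the multiplicativity $\overline{\chi}(\xi(x+c_j))=\overline{\chi}(\xi x)\,\overline{\chi}(\xi c_j)$. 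Since $\chi$ is a character one has $\overline{\chi}(\xi c_j)=\chi(-c_j\xi)$, and substituting the result of part (a) gives
$$\widehat{1_\Omega}(\xi)=\sum_j\overline{\chi}(\xi c_j)\,\widehat{1_{B(0,p^\gamma)}}(\xi)=p^\gamma 1_{B(0,p^{-\gamma})}(\xi)\sum_j\chi(-c_j\xi),$$
which is exactly \eqref{fourier}.

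There is no serious obstacle here; the only point requiring a little care is pinning down the exact threshold at which $\chi_\xi$ stops being trivial on $p^{-\gamma}\Zp$, namely verifying that triviality is governed by the condition $\xi\in p^\gamma\Zp$ and not a neighbouring ball. This is just the statement that $\chi$ equals $1$ on $\Zp$ but is nonconstant on $p^{-1}\Zp$, transported by the dilation $x\mapsto p^{-\gamma}x$; once it is established, both parts follow from the standard orthogonality of characters over a compact group.
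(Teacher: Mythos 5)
Your proof is correct and follows essentially the same route as the paper: the paper also shows the integral equals the measure when $|\xi|_p\le p^{-\gamma}$ and proves the vanishing for $|\xi|_p>p^{-\gamma}$ by the change of variable $x\mapsto x-z$ with $\chi(\xi z)\neq 1$, which is exactly the orthogonality relation you invoke; part (b) is the same linearity-plus-translation argument. The only cosmetic difference is that the paper first reduces to $\gamma=0$ by scaling, whereas you handle general $\gamma$ directly.
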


 \begin{proof} (a) By the scaling property of the Haar measure, we have only  to prove the result
 in the case $\gamma=0$. Recall that
 $$
    \widehat{1_{B(0, 1)}}(\xi)
    = \int_{B(0, 1)} \chi(-\xi  x) dx.
 $$
 When $|\xi|_p\le 1$, the integrand is equal to $1$, so $\widehat{1_{B(0, 1)}}(\xi)=1$.
 When $|\xi|_p>1$, making a change of variable $x = y-z$ with $z\in B(0, 1)$ chosen
 such that $ \chi(\xi\cdot z)\not=1$,
 we get $$\widehat{1_{B(0, 1)}}(\xi) = \chi(\xi z) \widehat{1_{B(0, 1)}}(\xi).$$
 It follows that $\widehat{1_{B(0, 1)}}(\xi)=0$ for $|\xi|_p>1$.

 (b) is  a direct consequence of (a) and of the fact
 $$
 \widehat{1_{B(c, p^r)}} (\xi) = \chi(-c \xi)   \widehat{1_{B(0, p^r)}} (\xi).
 $$
 \end{proof}


\subsection{ A criterion of spectral measure} 
Let $\mu$ be a probability Borel measure on $\Qp$. We say that $\mu$ is a {\em spectral measure}
if there exists a set $\Lambda \subset \widehat{\Q}_p$ such that
$\{\chi_\lambda\}_{\lambda \in \Lambda}$ is an orthonormal basis (i.e. a Hilbert basis) of $L^2(\mu)$. Then
$\Lambda$ is called a {\em spectrum} of $\mu$ and we call $(\mu, \Lambda)$ a {\em spectral pair}.
Assume that $\Omega$ is a set in $\Qp$
of positive and finite Haar measure. That $\Omega$ is a {\em spectral set} means the restricted measure $\frac{1}{\mathfrak{m}(\Omega)}\mathfrak{m}|_\Omega$
is a spectral measure. In this case, instead of saying $(\frac{1}{\mathfrak{m}(\Omega)}\mathfrak{m}|_\Omega, \Lambda)$ is a spectral pair,  we say that
$(\Omega, \Lambda)$ is a {\em spectral pair}.

Here is a criterion for a probability measure $\mu$ to be a spectral measure.
The result in the case $\mathbb{R}^d$
is due to Jorgensen and Pedersen \cite{Jorgensen-Pedersen1998}.
It holds on any local field (see \cite{Fan}). The proof is the same as in the Euclidean space.
We reproduce the proof here for completeness.

\begin{lemma}\label{Thm-SpectralMeasure} A Borel probability measure on $\Qp$ is a spectral measure with $\Lambda \subset
\widehat{\Q}_p$ as its spectrum iff
\begin{equation}\label{spectral criterion}
\forall \xi \in \widehat{\Q}_p, \quad \sum_{\lambda \in \Lambda}
 |\widehat{\mu} (\lambda -\xi)|^2 = 1.
 \end{equation}
 In particular, a Borel set $\Omega$ such that $0<|\Omega|<\infty$ is
   a spectral set with $\Lambda$ as spectrum iff
   \begin{align}\label{equ}
   \forall \xi \in \widehat{\Q}_p, \quad \sum_{\lambda \in \Lambda}
 |\widehat{1_\Omega} (\lambda -\xi)|^2 =|\Omega|^2.
   \end{align}
\end{lemma}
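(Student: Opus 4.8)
The plan is to use the standard Hilbert-space dichotomy: the family $\{\chi_\lambda\}_{\lambda\in\Lambda}$ is a Hilbert basis of $L^2(\mu)$ if and only if it is both orthonormal and complete, and to translate each of these two properties into a statement about $\widehat\mu$. Since $\mu$ is a probability measure, every character $\chi_\xi$ ($\xi\in\widehat{\Q}_p$) lies in $L^2(\mu)$ with $\|\chi_\xi\|_{L^2(\mu)}^2=1$. Writing $\langle f,g\rangle=\int f\overline{g}\,d\mu$ and using the elementary identities $\chi_\xi\overline{\chi_\lambda}=\chi_{\xi-\lambda}$ and $\overline{\chi(t)}=\chi(-t)$, a direct computation gives $\langle\chi_\xi,\chi_\lambda\rangle=\widehat\mu(\lambda-\xi)$. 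Hence the left-hand side of \eqref{spectral criterion} equals $\sum_{\lambda\in\Lambda}|\langle\chi_\xi,\chi_\lambda\rangle|^2$, and the whole lemma should be read as the assertion that \eqref{spectral criterion} is precisely Parseval's identity tested on the functions $\chi_\xi$.

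For the forward implication I would note that if $\{\chi_\lambda\}$ is a Hilbert basis, then Parseval's identity $\sum_\lambda|\langle f,\chi_\lambda\rangle|^2=\|f\|_{L^2(\mu)}^2$ holds for every $f$; specialising to $f=\chi_\xi$ and using $\|\chi_\xi\|_{L^2(\mu)}=1$ yields \eqref{spectral criterion}. For the converse I would assume \eqref{spectral criterion} and recover the two ingredients separately. Orthonormality comes from specialising $\xi$ to a point $\lambda_0\in\Lambda$: the $\lambda=\lambda_0$ summand contributes $|\widehat\mu(0)|^2=1$, so the remaining nonnegative terms must all vanish, forcing $\widehat\mu(\lambda-\lambda_0)=0$ for $\lambda\neq\lambda_0$, i.e. $\langle\chi_{\lambda_0},\chi_\lambda\rangle=0$; together with $\|\chi_\lambda\|=1$ this is orthonormality. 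For completeness, let $P$ be the orthogonal projection onto $M:=\overline{\mathrm{span}}\{\chi_\lambda:\lambda\in\Lambda\}$; with orthonormality in hand one has $\sum_\lambda|\langle\chi_\xi,\chi_\lambda\rangle|^2=\|P\chi_\xi\|^2$, so \eqref{spectral criterion} says $\|P\chi_\xi\|=\|\chi_\xi\|$, which via $\|\chi_\xi\|^2=\|P\chi_\xi\|^2+\|(I-P)\chi_\xi\|^2$ forces $\chi_\xi\in M$ for every $\xi$.

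It then remains to upgrade ``$\chi_\xi\in M$ for all $\xi$'' to $M=L^2(\mu)$, and this is where the only genuine work lies: I must show that the linear span of $\{\chi_\xi:\xi\in\widehat{\Q}_p\}$ is dense in $L^2(\mu)$. The argument is that if $f\in L^2(\mu)\subset L^1(\mu)$ (the inclusion holding because $\mu$ is finite) is orthogonal to every $\chi_\xi$, then the finite complex measure $f\,d\mu$ satisfies $\widehat{f\mu}(\xi)=\langle f,\chi_\xi\rangle=0$ for all $\xi$, whence $f\,d\mu=0$ and $f=0$ $\mu$-almost everywhere by injectivity of the Fourier transform on finite measures on $\Q_p$; thus the span is dense. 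Combined with $\chi_\xi\in M$ this gives $M=L^2(\mu)$, hence completeness, and together with orthonormality the Hilbert-basis property. Finally, the statement for a Borel set $\Omega$ with $0<|\Omega|<\infty$ follows by applying the criterion to $\mu=\frac{1}{\mathfrak{m}(\Omega)}\mathfrak{m}|_\Omega$, for which $\widehat\mu=\frac{1}{|\Omega|}\widehat{1_\Omega}$ turns \eqref{spectral criterion} into \eqref{equ}. The main obstacle is thus purely the density/Fourier-uniqueness step; everything else is routine bookkeeping in a Hilbert space.
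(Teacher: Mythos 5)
Your proposal is correct and follows essentially the same route as the paper: Parseval's identity applied to $f=\chi_\xi$ for the forward direction, specialising $\xi$ to a spectrum point for orthonormality, deducing $\chi_\xi\in\overline{\mathrm{span}}\{\chi_\lambda\}$ from the norm identity, and concluding completeness via injectivity of the Fourier transform on finite measures (the paper phrases this last step with Hahn--Banach where you use an orthogonal projection, but the argument is the same).
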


\begin{proof}
  Recall that $\langle f, g\rangle_\mu$ denotes the inner product in $L^2(\mu)$:
  $$
     \langle f, g\rangle_\mu = \int f \overline{g} d \mu, \quad \forall f, g \in L^2(\mu).
  $$
 Remark that
  $$
      \langle \chi_\xi, \chi_\lambda\rangle_\mu = \int \chi_\xi \overline{\chi}_\lambda d\mu = \widehat{\mu}(\lambda -\xi).
  $$
  It follows that $\chi_{\lambda}$ and $\chi_{\xi}$
  are orthogonal in $L^2(\mu)$ iff $\widehat{\mu}(\lambda -\xi)=0$.

  Assume that $(\mu, \Lambda)$ is a spectral pair. Then  (\ref{spectral criterion}) holds because of the Parseval
  equality and of the fact that $\{\widehat{\mu}(\lambda -\xi)\}_{\lambda\in \Lambda}$
  are Fourier coefficients of $\chi_\xi$ under the Hilbert basis $\{\chi_\lambda\}_{\lambda\in \Lambda}$.

  Now assume (\ref{spectral criterion}) holds. Fix any $\lambda'\in \Lambda$ and
   take $\xi=\lambda'$ in (\ref{spectral criterion}). We get
   $$
       1 + \sum_{\lambda\in \Lambda, \lambda\not=\lambda'}|\widehat{\mu}(\lambda -\lambda')|^2=1,
       $$
   which implies  $\widehat{\mu}(\lambda -\lambda')=0$ for all $\lambda \in \Lambda\setminus \{\lambda'\}$. Thus we have proved the orthogonality of
   $\{\chi_\lambda\}_{\lambda \in \Lambda}$.
   It remains to prove that
  $\{\chi_\lambda\}_{\lambda\in \Lambda}$ is complete. By the Hahn-Banach Theorem, what we have to prove is the implication
  $$
      f\in L^2(\mu), \forall \lambda \in  \Lambda, \langle f, \chi_\lambda\rangle_\mu=0
      \Rightarrow f=0.
  $$
  The condition (\ref{spectral criterion}) implies that
  $$
     \forall \xi\in \widehat{\Q}_p, \quad \chi_\xi = \sum_{\lambda\in \Lambda}
      \langle \chi_\xi, \chi_\lambda\rangle_\mu \chi_\lambda.
  $$
  This implies that $\chi_\xi$ is in the closure of the space spanned by
  $\{\chi_\lambda\}_{\lambda\in \Lambda}$. As $f$ is orthogonal to $\chi_\lambda$
  for all $\lambda \in \Lambda$. So, $f$ is orthogonal to
  $\chi_\xi$. Thus we have proved that
  $$
     \forall \xi \in \widehat{\Q}_p, \quad \int \overline{\chi}_\xi f d \mu = \langle f, \chi_\xi\rangle_\mu = 0.
  $$
  That is, the Fourier coefficients of the measure $f d\mu$ are all zero. Finally
  $f=0$ $\mu$-almost everywhere.
\end{proof}

\subsection{$\mathbb{Z}$-module generated by $p^n$-th roots of unity}
The Fourier condition of  a spectral set is tightly related to the fact that certain sums of roots of unity must be zero. Let $m\ge 2$ be an integer and let $\omega_m = e^{2\pi i/m}$, which is a primitive $m$-th root of unity. Denote  by $\mathcal{M}_m$  the set of integral points
$(a_0, a_1, \cdots, a_{m-1}) \in \mathbb{Z}^m$ such that
$$
   \sum_{j=0} ^{m-1} a_j \omega_m^j =0,
$$
which form a $\mathbb{Z}$-module. The fact that the degree over $\mathbb{Q}$ of the extension field $\mathbb{Q}(\omega_m)$
is equal to $\phi(m)$, where $\phi$ is Euler's totient
function, implies  that the rank of $\mathcal{M}_m$ is equal to
$m -\phi(m)$. Schoenberg (\cite{Schoenberg1964}, Theorem 1) found a set of generators. See also de Bruijn
\cite{Br1953} and R\'edei \cite{redei1950,redei1954}(actually,  R\'edei \cite{redei1950} was the first to formulate this, his incomplete proof was completed in \cite{Br1953} and in \cite{redei1954}, and later independently in \cite{Schoenberg1964}). Lagarias and Wang (\cite{Lagarias-Wang1996}, Lemma 4.1) observed that this set of generators is actually a base
when $m$ is a prime power.  Let $p$ be a prime and  $n$ be a positive integer.

\begin{lemma}[\cite{Lagarias-Wang1996,Schoenberg1964}]\label{root}
Let $(a_0,a_1,\cdots, a_{p^n-1})\in  \mathcal{M}_{p^n}$.
Then  for any integer $0\leq i\leq p^{n-1}-1$,  we have $a_i=a_{i+j p^{n-1}}$ for all $j=0,1,\cdots, p-1$.
 \end{lemma}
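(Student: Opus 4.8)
The plan is to reinterpret the vanishing condition as a polynomial divisibility statement. To the tuple $(a_0,a_1,\dots,a_{p^n-1})$ I associate the polynomial $P(x)=\sum_{j=0}^{p^n-1}a_j x^j\in\Z[x]$. The hypothesis $\sum_{j=0}^{p^n-1}a_j\omega_{p^n}^j=0$ says exactly that $\omega_{p^n}=e^{2\pi i/p^n}$ is a root of $P$. Since the minimal polynomial of $\omega_{p^n}$ over $\Q$ is the cyclotomic polynomial $\Phi_{p^n}$, it must divide $P$ in $\Q[x]$; write $P(x)=Q(x)\,\Phi_{p^n}(x)$ with $Q\in\Q[x]$.

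The key computational input is the explicit shape of $\Phi_{p^n}$. First I would recall the factorization $x^{p^n}-1=\prod_{d\mid p^n}\Phi_d(x)$, together with $x^{p^{n-1}}-1=\prod_{d\mid p^{n-1}}\Phi_d(x)$, which upon cancellation gives
\[
\Phi_{p^n}(x)=\frac{x^{p^n}-1}{x^{p^{n-1}}-1}=\sum_{k=0}^{p-1}x^{kp^{n-1}}=1+x^{p^{n-1}}+x^{2p^{n-1}}+\cdots+x^{(p-1)p^{n-1}}.
\]
In particular $\deg\Phi_{p^n}=p^n-p^{n-1}=\phi(p^n)$, which matches the degree of the extension $\Q(\omega_{p^n})$ over $\Q$ recalled above. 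Since $\deg P\le p^n-1$, a degree count forces $\deg Q\le (p^n-1)-(p^n-p^{n-1})=p^{n-1}-1$, so I may write $Q(x)=\sum_{i=0}^{p^{n-1}-1}b_i x^i$, padding with zero coefficients if needed.

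Finally I would compare coefficients in $P=Q\,\Phi_{p^n}$. Expanding the product and using the explicit form of $\Phi_{p^n}$ yields
\[
P(x)=\sum_{i=0}^{p^{n-1}-1}\sum_{k=0}^{p-1}b_i\,x^{i+kp^{n-1}}.
\]
Every exponent $m\in\{0,1,\dots,p^n-1\}$ has a unique representation $m=i+kp^{n-1}$ with $0\le i\le p^{n-1}-1$ and $0\le k\le p-1$ (this is just division of $m$ by $p^{n-1}$), so each monomial $x^m$ occurs exactly once in the expansion; hence $a_m=b_{\,m\bmod p^{n-1}}$. In particular $a_i=a_{i+kp^{n-1}}=b_i$ for every $0\le i\le p^{n-1}-1$ and every $k=0,1,\dots,p-1$, which is precisely the asserted periodicity. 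I expect the only real point to watch is the identification of $\Phi_{p^n}$ and its degree; once that explicit form is available, the divisibility-plus-degree-count argument and the coefficient matching are routine. A minor subtlety is the integrality of $Q$, but since $\Phi_{p^n}$ is monic, ordinary polynomial division already keeps $Q\in\Z[x]$; and in fact this is not even needed here, because the coefficient comparison directly exhibits each $b_i=a_i$ as an integer.
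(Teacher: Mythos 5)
Your proof is correct. The paper itself gives no proof of this lemma --- it is quoted from Schoenberg \cite{Schoenberg1964} and Lagarias--Wang \cite{Lagarias-Wang1996} --- and your argument (divisibility of $P(x)=\sum a_j x^j$ by $\Phi_{p^n}(x)=\sum_{k=0}^{p-1}x^{kp^{n-1}}$, followed by the degree count and the unique representation $m=i+kp^{n-1}$ of exponents) is exactly the standard proof found in those sources, so there is nothing to add beyond noting that the degenerate case $P=0$ is trivially covered.
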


 We shall use Lemma \ref{root} in the following two particular forms. The first one is an immediate
 consequence.

 \begin{lemma}\label{permu}
 Let $(b_0,b_1,\cdots, b_{p-1})\in \Z^{p}$.
 If $\sum_{j=0}^{p-1} e^{2\pi i b_j/p^n}=0$, then subject to a permutation of $(b_0,\cdots, b_{p-1})$, there exists  $0\leq r \leq p^{n-1}-1$ such  that
$$b_j  \equiv r+ jp^{n-1} (\!\!\!\!\mod p^n) $$ for all  $ j =0,1, \cdots,p-1$.
 \end{lemma}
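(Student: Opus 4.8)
The plan is to reduce the hypothesis to a membership statement in the module $\mathcal{M}_{p^n}$ and then invoke Lemma \ref{root}. First I would reduce each exponent modulo $p^n$: writing $\bar b_j \in \{0,1,\dots,p^n-1\}$ for the residue of $b_j$, one has $e^{2\pi i b_j/p^n} = e^{2\pi i \bar b_j/p^n}$, so the hypothesis is unchanged. The key device is to record multiplicities: for $0 \le k \le p^n-1$ set $a_k := \#\{\, j : \bar b_j = k\,\}$. Then $\sum_{k=0}^{p^n-1} a_k e^{2\pi i k/p^n} = \sum_{j=0}^{p-1} e^{2\pi i b_j/p^n} = 0$, so the integer vector $(a_0,\dots,a_{p^n-1})$ lies in $\mathcal{M}_{p^n}$.

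Applying Lemma \ref{root}, for each $0 \le i \le p^{n-1}-1$ the coefficients along the fiber $\{\, i,\, i+p^{n-1},\, \dots,\, i+(p-1)p^{n-1}\,\}$ are all equal; call the common value $c_i$. Summing over everything, $p = \sum_k a_k = p\sum_{i=0}^{p^{n-1}-1} c_i$, hence $\sum_i c_i = 1$. Here I would use crucially that the $a_k$, being cardinalities, are nonnegative integers: a sum of nonnegative integers equal to $1$ forces exactly one $c_i$ to equal $1$ and all the others to vanish. Let $r$ denote this distinguished index, so $0 \le r \le p^{n-1}-1$.

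Consequently $a_k = 1$ precisely for $k \in \{\, r,\, r+p^{n-1},\, \dots,\, r+(p-1)p^{n-1}\,\}$ and $a_k = 0$ otherwise. Read in terms of multiplicities, this says that the multiset $\{\bar b_0, \dots, \bar b_{p-1}\}$ coincides with $\{\, r + jp^{n-1} : 0\le j \le p-1\,\}$, each value occurring exactly once. Hence there is a permutation $\pi$ of $\{0,\dots,p-1\}$ with $\bar b_{\pi(j)} = r + jp^{n-1}$; after relabeling the $b_j$, one obtains $b_j \equiv r + jp^{n-1} \pmod{p^n}$ for all $j$, which is exactly the asserted form.

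I would flag the passage to the counting vector as the one genuine idea: the sum in the hypothesis has only $p$ (possibly repeated, possibly negative) terms, whereas $\mathcal{M}_{p^n}$ lives in $\mathbb{Z}^{p^n}$, so one must first \emph{spread} the data out by multiplicity before Lemma \ref{root} applies. The remainder is bookkeeping, with the nonnegativity of the multiplicities doing the real work in forcing the rigid final form; I do not expect any deeper obstacle.
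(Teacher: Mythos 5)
Your proof is correct and is precisely the argument the paper has in mind: the paper states Lemma \ref{permu} as ``an immediate consequence'' of Lemma \ref{root}, and the passage to the multiplicity vector $(a_0,\dots,a_{p^n-1})\in\mathcal{M}_{p^n}$ followed by the counting argument ($\sum_i c_i=1$ with nonnegative integer $c_i$) is exactly how that consequence is drawn. No gaps; the bookkeeping is sound.
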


 \begin{lemma}\label{C}
 Let $C$ be a finite subset of $\Z$. If $\sum_{c\in C} e^{2\pi i c/p^n}=0$, then $p \mid \sharp C$ and $C$ can be decomposed into  $\sharp C/p$ disjoint subsets
 $C_1,  C_2,  \cdots, $ $ C_{\sharp C/p}$, such that each $C_j$ consists of $p$ points and
 $$\sum_{c\in C_j}e^{2\pi i c/p^n}=0.$$
\end{lemma}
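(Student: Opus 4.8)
The plan is to pass from the set $C$ to the vector of its residue multiplicities modulo $p^n$, feed that vector into Lemma \ref{root}, and then exploit the resulting equal-multiplicity property to bundle the points of $C$ into groups of $p$ whose residues modulo $p^n$ differ pairwise by a multiple of $p^{n-1}$.

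First I would record, for each $j\in\{0,1,\dots,p^n-1\}$, the multiplicity $a_j:=\sharp\{c\in C: c\equiv j\ (\mathrm{mod}\ p^n)\}$. Since $e^{2\pi i c/p^n}$ depends only on $c$ modulo $p^n$, the hypothesis $\sum_{c\in C}e^{2\pi i c/p^n}=0$ rewrites as $\sum_{j=0}^{p^n-1}a_j\,\omega_{p^n}^{\,j}=0$ with $\omega_{p^n}=e^{2\pi i/p^n}$; that is, $(a_0,\dots,a_{p^n-1})\in\mathcal{M}_{p^n}$. Applying Lemma \ref{root}, I obtain that for every $i\in\{0,1,\dots,p^{n-1}-1\}$ the multiplicities along the fiber $\{i+jp^{n-1}:j=0,\dots,p-1\}$ are all equal to a common value $m_i:=a_i=a_{i+p^{n-1}}=\cdots=a_{i+(p-1)p^{n-1}}$.

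The second ingredient is the elementary identity $\sum_{j=0}^{p-1}\omega_{p^n}^{\,i+jp^{n-1}}=\omega_{p^n}^{\,i}\sum_{j=0}^{p-1}e^{2\pi i j/p}=0$, valid because the inner sum runs over all $p$-th roots of unity. Hence any $p$ points of $C$ whose residues modulo $p^n$ are exactly $i,\,i+p^{n-1},\,\dots,\,i+(p-1)p^{n-1}$ form a subset $C_\ell$ with $\sum_{c\in C_\ell}e^{2\pi i c/p^n}=0$. Now for each fixed $i$, the $p$ residue classes $i+jp^{n-1}$ ($j=0,\dots,p-1$) each contain exactly $m_i$ elements of $C$; I enumerate these elements within each class and form $m_i$ blocks, each block taking one element from each of the $p$ classes. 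Running this construction over all $i\in\{0,\dots,p^{n-1}-1\}$ yields a partition of $C$ into $\sum_i m_i$ disjoint $p$-element blocks, each summing to zero. Finally $\sharp C=\sum_j a_j=p\sum_i m_i$, which gives $p\mid\sharp C$ and identifies the number of blocks as $\sharp C/p$, as required.

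I expect the only delicate point to be bookkeeping rather than mathematics. Because $C$ is a set of integers, several of which may share a residue modulo $p^n$, the decomposition must be phrased through the multiplicities $a_j$ together with a fixed enumeration inside each residue class, and one must verify that the blocks so produced are genuinely disjoint and together exhaust $C$. The actual content is carried entirely by Lemma \ref{root}: once the equal-multiplicity property $a_i=a_{i+jp^{n-1}}$ is available, the grouping and the counting are routine.
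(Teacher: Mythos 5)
Your proof is correct and follows essentially the same route as the paper's: both arguments hinge on feeding the residue data of $C$ into Lemma \ref{root} and producing blocks that are transversals of the fibers $\{i+jp^{n-1}: 0\le j\le p-1\}$ modulo $p^n$. The only difference is organizational --- the paper fixes a point, extracts one such block, and inducts on the remaining set, whereas you apply Lemma \ref{root} once to the multiplicity vector and read off the entire partition from the equal-multiplicity property, which handles the repeated-residue bookkeeping somewhat more transparently.
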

\begin{proof} Observe that $e^{2\pi i c/p^n}=e^{2\pi i c^{\prime}/p^n}$ if and only if $c\equiv c^{\prime}  (\!\!\!\!\mod p^n)$.
Fix a point $c_0\in C$. By Lemma \ref{root}, there are  other $p-1$ points $c_1,c_2,\cdots, c_{p-1}\in C$
such that $c_j \equiv c_0+jp^{n-1} (\!\!\!\!\mod p^n)$ for all $1\leq j \leq p-1$.   Thus we have
$$\sum_{0\leq i\leq p-1}e^{2\pi i c_j/p^n}=0.$$ Set $C_1=\{c_0,c_1,\cdots,c_{p-1}\}$. So, the hypothesis is reduced to  $$\sum_{c\in C\setminus C_1} e^{2\pi i c/p^n}=0.$$
 We can complete the proof by induction.
\end{proof}

The following lemma states that  the property
$\sum_{j=0}^{m-1}\chi(\xi_j)=0$ of the set of points
$(\xi_0, \xi_1, \cdots, \xi_{m-1})\in \Qp^m$  is invariant under `rotation'.

 \begin{lemma} \label{multi}
 Let $\xi_0, \xi_1,\cdots,  \xi_{m-1}$ be $m$ points in $\Qp$.
 If $\sum_{j=0}^{m-1}\chi(\xi_j)=0,$ then $p\mid m$ and  $$\sum_{j=0}^{m-1}\chi(x  \xi_j)=0$$
 for all $x\in \Zp^{\times}$.
\end{lemma}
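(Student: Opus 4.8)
The plan is to convert the hypothesis, which concerns the $p$-adic character $\chi$, into a single relation among $p^n$-th roots of unity, and then to read off both conclusions from the structural Lemma \ref{root}. Since $\chi(\xi)=e^{2\pi i\{\xi\}}$ and every fractional part $\{\xi_j\}$ is a rational number whose denominator is a power of $p$, I would first fix one integer $n$ large enough that $\{\xi_j\}=r_j/p^{n}$ for suitable integers $r_j$ and all $j$; equivalently, writing $\xi_j=r_j/p^{n}+y_j$ with $y_j\in\Zp$, we have $\chi(\xi_j)=e^{2\pi i r_j/p^n}$. The hypothesis then reads $\sum_{j=0}^{m-1}e^{2\pi i r_j/p^n}=0$.

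The central point is to identify the effect of multiplying by a unit $x\in\Zp^{\times}$. Let $x_0\in\{0,1,\dots,p^{n}-1\}$ be the integer with $x\equiv x_0\pmod{p^{n}\Zp}$; then $x_0$ is coprime to $p$ precisely because $x\in\Zp^{\times}$. Using $\xi_j=r_j/p^{n}+y_j$ with $y_j\in\Zp$, one has $xy_j\in\Zp$, so $\chi(xy_j)=1$; and since $\chi_{r_j/p^{n}}$ is constant on cosets of $p^{n}\Zp=B(0,p^{-n})$ (by the local constancy recalled in the preliminaries, as $|r_j/p^{n}|_p\le p^{n}$), the value $\chi(x\cdot r_j/p^{n})$ depends only on $x_0$. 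This gives the key identity $\chi(x\xi_j)=e^{2\pi i x_0 r_j/p^{n}}$; in other words, multiplication by the unit $x$ realizes exactly the Galois substitution $\omega_{p^n}\mapsto\omega_{p^n}^{x_0}$.

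To exploit Lemma \ref{root}, I would next group indices by residues: for $s=0,1,\dots,p^{n}-1$ set $a_s=\#\{\,j:\,r_j\equiv s\pmod{p^{n}}\,\}$, so the hypothesis becomes $\sum_{s=0}^{p^{n}-1}a_s\,\omega_{p^n}^{s}=0$, i.e. $(a_0,\dots,a_{p^{n}-1})\in\mathcal{M}_{p^{n}}$ (this phrasing also transparently absorbs any repetitions among the $\xi_j$). Lemma \ref{root} then yields $a_i=a_{i+jp^{n-1}}$ for all $0\le i\le p^{n-1}-1$ and $0\le j\le p-1$. Counting gives $m=\sum_s a_s=p\sum_{i=0}^{p^{n-1}-1}a_i$, which proves $p\mid m$. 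For the rotated sum, the key identity and this periodicity give
\begin{align*}
\sum_{j=0}^{m-1}\chi(x\xi_j)=\sum_{s=0}^{p^{n}-1}a_s\,\omega_{p^n}^{x_0 s}
=\sum_{i=0}^{p^{n-1}-1}a_i\,\omega_{p^n}^{x_0 i}\sum_{j=0}^{p-1}\omega_{p^n}^{x_0 jp^{n-1}}
=\sum_{i=0}^{p^{n-1}-1}a_i\,\omega_{p^n}^{x_0 i}\sum_{j=0}^{p-1}\omega_{p}^{x_0 j},
\end{align*}
and the inner sum vanishes because $\omega_p^{x_0}$ is a primitive $p$-th root of unity ($x_0$ being coprime to $p$). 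Hence $\sum_{j=0}^{m-1}\chi(x\xi_j)=0$.

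The step I expect to be the main obstacle is the key identity $\chi(x\xi_j)=e^{2\pi i x_0 r_j/p^{n}}$: one must check carefully that the $\Zp$-part $y_j$ contributes nothing and that the unit acts only through $x_0\bmod p^{n}$, so that it genuinely induces the automorphism $\omega_{p^n}\mapsto\omega_{p^n}^{x_0}$ fixing the integer coefficients $a_s$. Everything else is bookkeeping: once this identity and the passage to the integer vector $(a_s)$ are in place, both conclusions drop out of Lemma \ref{root}. An alternative to the last paragraph would be to invoke Lemma \ref{C}, splitting the points into groups of $p$ each summing to zero and checking that each group, being of the shape $\{\,r+jp^{n-1}\,\}_{j}$, remains a zero-sum under the substitution; I nevertheless prefer the counting-vector route, which handles repeated $\xi_j$ without extra care.
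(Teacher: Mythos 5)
Your proof is correct, but it is organized differently from the paper's. The paper first invokes Lemma \ref{C} to split the multiset $\{\xi_j\}$ into $m/p$ blocks of $p$ points each summing to zero (this gives $p\mid m$ and reduces everything to the case $m=p$), then uses Lemma \ref{permu} to put a single block in the form $\xi_j\equiv r+j/p\ (\mathrm{mod}\ \Zp)$, and finally observes that multiplication by the first digit $x_0$ of $x$ permutes $\{0,1,\dots,p-1\}$, so each block remains zero-sum. You instead bypass Lemmas \ref{C} and \ref{permu} entirely: you encode the hypothesis as a single integer vector $(a_s)\in\mathcal{M}_{p^n}$, apply Lemma \ref{root} once to get the $p^{n-1}$-periodicity $a_i=a_{i+jp^{n-1}}$, and extract both conclusions from it --- $p\mid m$ by summing the coefficients, and the vanishing of the rotated sum from the factored geometric series $\sum_{j=0}^{p-1}\omega_p^{x_0 j}=0$, which uses only that $x_0$ is coprime to $p$. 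Your ``key identity'' $\chi(x\xi_j)=e^{2\pi i x_0 r_j/p^n}$ is the exact analogue of the paper's computation $x\xi_j\equiv xr+x_0j/p\ (\mathrm{mod}\ \Zp)$ and is verified correctly (the $\Zp$-part contributes nothing since $x y_j\in\Zp$, and $x$ acts through its residue mod $p^n$). What your route buys is a transparent treatment of repeated points and no hidden induction or ``without loss of generality'' reduction; what the paper's route buys is the explicit digit-permutation picture and reuse of Lemmas \ref{C} and \ref{permu}, which are needed elsewhere in the paper anyway. Both arguments rest on the same structural input, Lemma \ref{root}.
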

\begin{proof}By  Lemma \ref{C}, we get $p\mid m$ and moreover, $\{\xi_0, \xi_1,\cdots, \xi_{m-1}\}$  consists of  $m/p$ subsets  $C_1, C_2, \cdots,  C_{m/p}$ such that  each $C_j$ ,$1\leq j\leq m/p $, contains $p$ elements and
$\sum_{\xi\in C_j}\chi(\xi)=0$.

Without  loss of generality, we assume that $m=p$. By Lemma \ref{permu}, subject to a permutation of $(\xi_0,\cdots,\xi_{p-1})$,
there exists $r \in \Q_p$ such  that
 $$\xi_j \equiv r+ j/p \ (\!\!\!\!\!\mod \Z_p) $$ for all  $ j=0,1, \cdots,p-1$.
 Now, for any given $x\in \Z_p^{\times}$,
 we have $$x \xi_j \equiv xr+\frac{x_0j}{p} \ (\!\!\!\!\!\mod \Z_p)$$
where $x_0\in \{1, \cdots, p-1\}$ is the first digit of the $p$-adic expansion of $x$.
Observe that the multiplication by $x_0$ induces  a permutation on $\{0, 1, \cdots,  p-1\}$.
So we have
$$\sum_{j=0}^{p-1}\chi(x \xi)=\sum_{k=0}^{p-1}e^{2\pi i\{xr+\frac{k}{p}\}}=0.$$
\end{proof}

\subsection{Uniform distribution of spectrum}

 The following lemma establishes the fact that  given a compact open spectral set in $\mathbb{Z}_p$ consisting of
  small balls of radius $p^{-\gamma}$ ($\gamma >0$), any spectrum of the set is uniformly distributed in the sense that
    any ball of radius $p^\gamma$ contains exactly as many points as the number of small balls
    of radius $p^{-\gamma}$ in the spectral set.
  This fact will contribute to proving  ``spectral property implies homogeneity" of Theorem \ref{main}.
  
Let $\Omega$ be a compact open subset of $\Zp$. Assume that $\Omega$ is of the form $\Omega=\bigsqcup_{c\in C}c+ p^\gamma \Zp$, where $\gamma$ is a positive integer and  $C\subset  \{0,1, \cdots,  p^{\gamma}-1\}$.
\begin{lemma}\label{number}
Suppose that  $(\Omega,\Lambda)$ is a spectral pair.
Then every closed ball of  radius $p^\gamma$ contains $\sharp C$ spectrum points in $\Lambda$. That is,   $$\sharp(B(a,{p^\gamma})\cap \Lambda)= \sharp C$$ for every  $a \in \Qp$.
\end{lemma}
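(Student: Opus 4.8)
The plan is to combine the explicit Fourier transform of $1_\Omega$ furnished by Lemma \ref{FourierIntegral} with the spectral criterion of Lemma \ref{Thm-SpectralMeasure}, and then to \emph{integrate} the resulting pointwise identity over a single ball of radius $p^\gamma$. This integration is the heart of the argument: it converts a weighted sum over spectral points, which is what the criterion directly delivers, into the bare cardinality $N:=\sharp(\Lambda\cap B(a,p^\gamma))$. Concretely, since $\Omega=\bigsqcup_{c\in C}(c+p^\gamma\Zp)$ is a disjoint union of the balls $B(c,p^{-\gamma})$, Lemma \ref{FourierIntegral} gives, with $P(\xi):=\sum_{c\in C}\chi(-c\xi)$,
\[
\widehat{1_\Omega}(\xi)=p^{-\gamma}\,1_{B(0,p^\gamma)}(\xi)\,P(\xi),\qquad |\widehat{1_\Omega}(\xi)|^2=p^{-2\gamma}\,1_{B(0,p^\gamma)}(\xi)\,|P(\xi)|^2 .
\]
As $|\Omega|=\sharp C\cdot p^{-\gamma}$, the criterion \eqref{equ} becomes, after cancelling the factor $p^{-2\gamma}$,
\[
\forall\,\xi\in\widehat{\Q}_p,\qquad \sum_{\lambda\in\Lambda,\ |\lambda-\xi|_p\le p^\gamma}|P(\lambda-\xi)|^2=(\sharp C)^2 .
\]

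Next I would fix the ball $B(a,p^\gamma)$. For every $\xi\in B(a,p^\gamma)$ one has $B(\xi,p^\gamma)=B(a,p^\gamma)$, so the summation index ranges exactly over $\Lambda\cap B(a,p^\gamma)$ regardless of $\xi$. Integrating the above identity over $\xi\in B(a,p^\gamma)$, a ball of measure $p^\gamma$, and using translation invariance of the Haar measure via the substitution $\eta=\lambda-\xi$ (which, for $\lambda\in B(a,p^\gamma)$, sends $B(a,p^\gamma)$ bijectively onto the subgroup $B(0,p^\gamma)$), each inner integral becomes $\int_{B(0,p^\gamma)}|P(\eta)|^2\,d\eta$, independent of $\lambda$. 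Hence
\[
N\int_{B(0,p^\gamma)}|P(\eta)|^2\,d\eta=(\sharp C)^2\,p^\gamma .
\]
The remaining computation is the orthogonality of the characters $\eta\mapsto\chi(-c\eta)$, $c\in C$, on $B(0,p^\gamma)$: expanding $|P|^2$ and applying Lemma \ref{FourierIntegral}(a), each cross term integrates to $p^\gamma\,1_{B(0,p^{-\gamma})}(c-c')$, and since $c,c'\in\{0,\dots,p^\gamma-1\}$ the indicator vanishes unless $c=c'$. Thus $\int_{B(0,p^\gamma)}|P|^2=\sharp C\cdot p^\gamma$, and the displayed identity collapses to $N=\sharp C$.

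The one genuinely substantive step, and what I expect to be the main obstacle to present cleanly, is this averaging: at each fixed $\xi$ the criterion supplies only the \emph{weighted} count $\sum|P(\lambda-\xi)|^2=(\sharp C)^2$, in which the weights $|P(\lambda-\xi)|^2$ vary with $\xi$ and carry no individual information; integrating over the whole ball homogenizes them precisely because the mean of $|P|^2$ over $B(0,p^\gamma)$ equals $\sharp C\cdot p^\gamma$. Finiteness of $N$ needs no separate treatment, since all summands are nonnegative, so term-by-term integration against the strictly positive constant $\sharp C\cdot p^\gamma$ forces $N<\infty$ from the finite right-hand side. (As a sanity check, the inequality $N\le\sharp C$ can also be seen directly: for distinct $\lambda,\lambda'\in\Lambda\cap B(a,p^\gamma)$ orthogonality of the spectrum gives $\widehat{1_\Omega}(\lambda-\lambda')=0$, hence $P(\lambda-\lambda')=0$, so the restrictions $\chi_\lambda|_\Omega$ are pairwise orthogonal nonzero functions that, after dividing by the unimodular factor $\chi_a$, lie in the $\sharp C$-dimensional space of functions constant on the balls of $\Omega$; the integration argument then upgrades this bound to the desired equality.)
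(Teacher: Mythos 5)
Your proposal is correct and follows essentially the same route as the paper: both integrate the spectral criterion \eqref{equ} over the single ball $B(a,p^\gamma)$, observe that the summation index stabilizes to $\Lambda\cap B(a,p^\gamma)$ there, and kill the cross terms $\int_{B(a,p^\gamma)}\chi((c-c')(\lambda-\xi))\,d\xi$ using Lemma \ref{FourierIntegral}(a) together with $|c-c'|_p>p^{-\gamma}$ for distinct $c,c'\in C$. Your repackaging of the computation as ``the mean of $|P|^2$ over $B(0,p^\gamma)$ equals $\sharp C\cdot p^\gamma$'' is only a cosmetic difference, and your explicit handling of the possible infinitude of $N$ via nonnegativity is a small point the paper leaves implicit.
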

 \begin{proof} By Lemma \ref{FourierIntegral}, we have
 $$\widehat{1_{\Omega}}(\lambda-\xi)=p^{-\gamma} 1_{B(0,p^\gamma)}(\lambda-\xi)\sum_{c\in C}\chi(-(\lambda-\xi)c).$$
 Then a simple computation leads to
 $$\sum_{\lambda\in \Lambda} |\widehat{1_{\Omega}}(\lambda-\xi)|^2=\sum_{\lambda\in \Lambda}p^{-2\gamma}1_{B(0, p^\gamma)}(\lambda-\xi)
    \Big( \sharp C+\sum_{\substack{c,c^{\prime} \in C \\c \neq c^{\prime}}}\chi((c-c^\prime)(\lambda-\xi))\Big).$$
Consider the equality (\ref{equ}) in Lemma \ref{Thm-SpectralMeasure}.
By integrating  both sides of this equality on the ball  $B(a, p^\gamma)$, we get
\begin{align}\label{1}
|\Omega|^2 p^\gamma
               &=p^{-2\gamma} \sum_{\substack {\lambda\in \Lambda\\ |\lambda-a|\leq p^\gamma}}\Big(\sharp C p^\gamma+\sum_{\substack{c,c^{\prime} \in C \\c \neq c^{\prime}}} \int_{B(a,p^\gamma)} \chi((c-c^\prime)(\lambda-\xi)) d\xi\Big).
\end{align}
Here we have used the fact that two balls of same size are either identical or disjoint.
 Observe that
 \begin{align}\label{2}
 \int_{B(a,p^\gamma)}\chi((c-c^\prime)(\lambda-\xi))d\xi =\chi((c-c^\prime)\lambda )\cdot \widehat{1_{B(a,p^\gamma)}}(c-c^{\prime})
 \end{align}
 and
 $\widehat{1_{B(a,p^\gamma)}}(c-c^{\prime})=\chi(-(c-c^\prime)a)\cdot p^{\gamma}1_{B(0,p^{-\gamma)}}(c-c^{\prime})$.
 However, by the assumption, $|c-c^{\prime}|_p >p^{-\gamma}$ for $c,c^{\prime}\in C$ with $c\neq c^{\prime}$. Hence we  have
 \begin{align}\label{3}
 \widehat{1_{B(a,p^\gamma)}}(c-c^{\prime})&=0.
 \end{align}
 Applying the equalities (\ref{2})  and  (\ref{3}) to the equality (\ref{1}), we obtain
 \begin{align}
   |\Omega|^2\cdot p^\gamma   &=\sharp C\cdot p^{-\gamma}\cdot \sharp (\Lambda \cap B(a,p^\gamma))
\end{align}
Since $|\Omega|=\sharp C\cdot p^{-\gamma}$, we finally get
 $\sharp (\Lambda \cap B(a,p^\gamma))=\sharp C$.

 \end{proof}

 The restriction that $\Omega$ is contained in $\mathbb{Z}_p$ is not necessary, because scaling and translation 
preserves the spectral property.

\subsection{ Finite $p$-homogeneous trees}
\label{Finitehomogeneoustree}
Let $\gamma $ be a positive integer. 
To any $t_0t_1\cdots t_{\gamma -1}\in \{0, 1, 2,  \cdots, p-1\}^\gamma $,  we associate
an integer
    $$
        c =c(t_0t_1\cdots t_{\gamma -1})=\sum_{i=0}^{\gamma -1}
               t_i p^i \in \{0, 1, 2, \cdots, p^\gamma -1\}.
    $$
So  $ \mathbb{Z}/p^\gamma\mathbb{Z}\simeq\{0, 1, \cdots, p^\gamma-1\}$  is identified with $\{0, 1, 2,  \cdots, p-1\}^\gamma$ which is considered as a finite tree, denoted by $\mathcal{T}^{(\gamma)}$, see {\sc Figure} \ref{Tn1} for an example. 
 The set of vertices $\mathcal{T}^{(\gamma)}$ consists of  the disjoint union  of the sets $\mathbb{Z}/p^n\mathbb{Z}, 0\leq n\leq \gamma$. Each  vertex, except the root of the tree,  is identified with a sequence $t_0t_1\cdots t_{n-1}$ with $1\leq n\leq\gamma$ and $t_i\in \{0,1, \cdots, p-1\}$.
 The set of edges consists of pairs $(x,y)\in \mathbb{Z}/p^n\mathbb{Z}\times \mathbb{Z}/p^{n+1}\mathbb{Z}$, such that $x\equiv y ~(\!\!\!\!\mod p^n)$, where $0\leq n\leq \gamma-1$.  For example, each
point $c$ of $\mathbb{Z}/p^\gamma\mathbb{Z}$ is identified with $\sum_{i=0}^{\gamma -1} t_i p^i \in \{0, 1, \cdots, p^\gamma-1\}$, which is called {\em a boundary point} of the tree. 
 \begin{figure}[H]
  \centering
  \includegraphics[width=\textwidth]{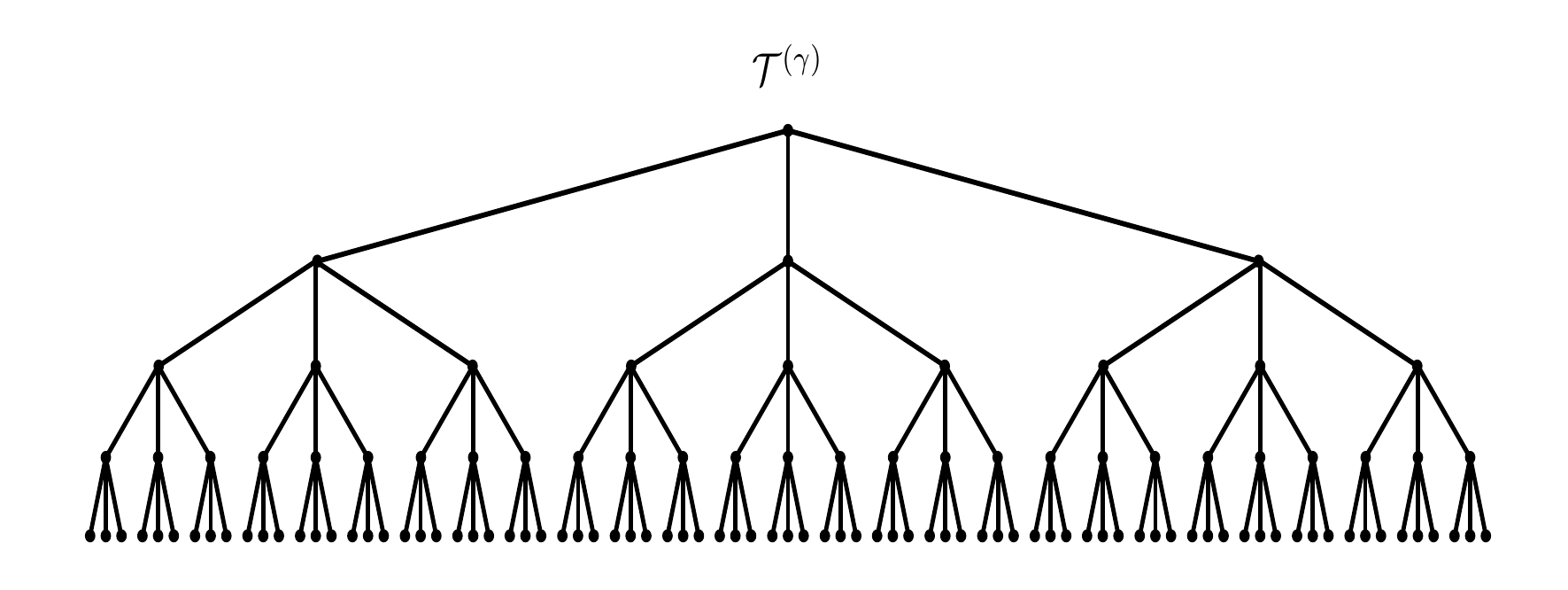}\\
  \caption{ The set  $\mathbb{Z}/3^4\mathbb{Z}=\{0,1,2,\cdots, 80\}$ is considered as a tree $\mathcal{T}^{(4)}$.}
\label{Tn1}
\end{figure}

Each subset $C\subset \mathbb{Z}/p^\gamma\mathbb{Z}$ will determine a subtree of $\mathcal{T}^{(\gamma)}$, denoted by $\mathcal{T}_{C}$, which consists  of 
the paths from the root  to the boundary points in $C$. 
  The set of vertices $\mathcal{T}_{C}$ consists of  the disjoint union  of the sets $C_{\!\!\!\! \mod{p^n}}, 0\leq n\leq \gamma$.  The set of edges consists of pairs $(x,y)\in C_{\!\!\!\! \mod{p^n}}\times C_{\!\!\!\! \mod{p^{n+1}}}$, such that $x\equiv y ~(\!\!\!\!\mod p^n)$, where $0\leq n\leq \gamma-1$. 

Now we are going to construct a class of subtrees of $\mathcal{T}^{(\gamma)}$.
Let $I$ be a subset of $\{0, 1, 2, \cdots, \gamma-1\}$ and let $J$ be the complement
of $I$ in  $\{0, 1, 2, \cdots, \gamma-1\}$. Thus $I$ and $J$ form a
partition of $\{0, 1, 2, \cdots, \gamma-1\}$.
It is allowed that $I$ or $J$ is empty.
We say a subtree of $\mathcal{T}^{(\gamma)}$ is of  {\em $\mathcal{T}_{I, J}$-form} if its boundary points
$t_0t_1\cdots t_{\gamma-1}$ of  $\mathcal{T}_{I, J}$ are those of $\mathcal{T}^{(\gamma)}$ satisfying the following conditions:\\
\indent (i) \ \  if $i \in I$, $t_i$ can take any value of $\{0, 1, \cdots, p-1\}$;\\
\indent (ii) \ if $i\in J$, for any $t_0t_1\cdots t_{i-1}$, we fix one value of $\{0, 1, \cdots, p-1\}$ which is the only value taken by $t_i$. In other words, $t_i$ takes only one value from $\{0, 1, \cdots, p-1\}$ which depends on $t_0t_1\cdots t_{i-1}$.

Remark that such a subtree depends not only on $I$ and $J$ but also on the values taken
by $t_i$'s with $i\in J$. 
 A $\mathcal{T}_{I, J}$-form tree  is  called a  finite $p$-{\em homogeneous tree}.
A special subtree $\mathcal{T}_{I, J}$ is shown in {\sc Figure} \ref{TIJ}.
  \begin{figure}
  \centering
  \includegraphics[width=1\textwidth]{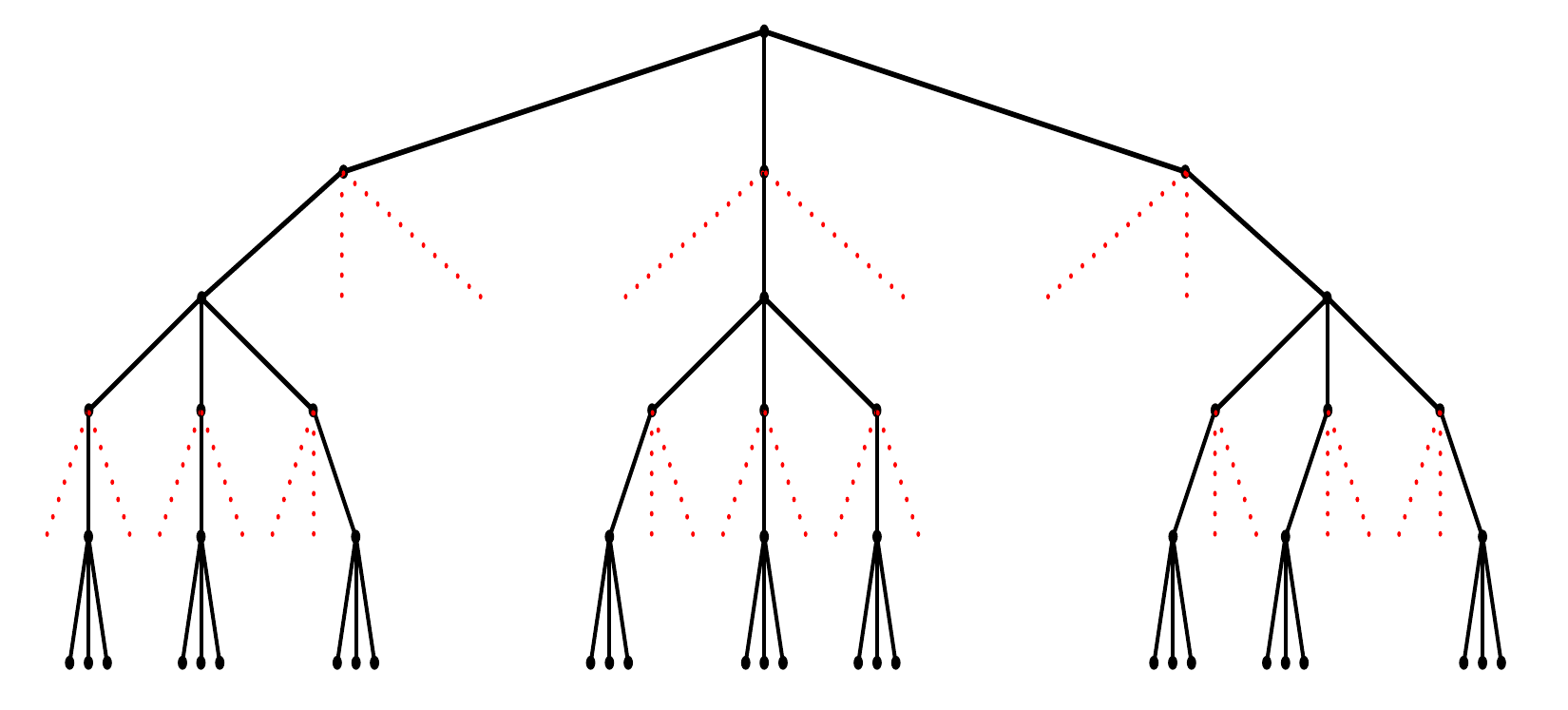}\\
  \caption{For $p=3$, a $\mathcal{T}_{I,J}$-form tree  with $\gamma=5, I=\{0, 2, 4\} , J=\{1,3\}$.}
\label{TIJ}
\end{figure}
  
A set $C\subset \Z/p^{\gamma}\Z$ is said to be {\em $p$-homogeneous} if the corresponding tree $\mathcal{T}_{C}$ is  $p$-homogeneous.
If   $C\subset \{0,1,2,\cdots, p^{\gamma}-1\}$ is considered as a subset of $\Z_p$, the tree $\mathcal{T}_{C}$ could be identified with the finite tree determined  by the compact open set $\Omega=\bigsqcup_{c\in C}c+ p^\gamma \Zp$. By definition, we immediately have the following lemma.
\begin{lemma}\label{b}
The above compact set $\Omega$ is  $p$-homogeneous in $\Qp$ if and only if the finite set $C\subset \mathbb{Z}/p^\gamma \mathbb{Z}$ is 
$p$-homogeneous.
\end{lemma}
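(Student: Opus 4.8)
The plan is to realize $\mathcal{T}_\Omega$ and $\mathcal{T}_C$ as the same labelled tree and then to observe that $p$-homogeneity is intrinsic to that labelled tree. Concretely, I would construct a bijection $\phi$ between the two vertex sets that preserves the level of each vertex (hence the Haar measure $|B|$) and the descendant relation $\prec$. Since $p$-homogeneity is defined solely through the function sending $|B|$ to the number of descendants of $B$, it is preserved by any such $\phi$, and the desired equivalence follows at once.

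For the vertex correspondence, recall that every ball occurring in $\mathcal{T}_\Omega$ has the form $a + p^n \Zp$ with $0 \le n \le \gamma$ and $a \in \{0, 1, \ldots, p^n - 1\}$, and that it lies in $\mathcal{T}_\Omega$ exactly when it contains some ball $c + p^\gamma \Zp$ with $c \in C$. Since $c + p^\gamma \Zp \subset a + p^n \Zp$ iff $c \equiv a \pmod{p^n}$, this occurs iff $a$ belongs to the reduction $C_{\mod p^n}$ of $C$ modulo $p^n$. Thus $\phi : a + p^n \Zp \mapsto (a \bmod p^n)$ is a well-defined bijection from the level-$n$ vertices of $\mathcal{T}_\Omega$ onto the level-$n$ vertices $C_{\mod p^n}$ of $\mathcal{T}_C$.

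Next I would verify that $\phi$ respects the tree structure. As $|a + p^n \Zp| = p^{-n}$ depends only on $n$, the map $\phi$ preserves levels and hence $|B|$. Moreover $a' + p^{n+1} \Zp \prec a + p^n \Zp$ holds iff $a' \equiv a \pmod{p^n}$, which is precisely the edge relation imposed between $C_{\mod p^{n+1}}$ and $C_{\mod p^n}$ in the definition of $\mathcal{T}_C$; therefore $\phi$ transports the descendant relation of $\mathcal{T}_\Omega$ onto that of $\mathcal{T}_C$. In particular each vertex $B$ has exactly as many descendants as $\phi(B)$, and because $\phi$ preserves $|B|$, the data ``number of descendants as a function of $|B|$'' coincide for the two trees. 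Since $p$-homogeneity means exactly that this function takes only the values $1$ and $p$ and depends only on $|B|$, we conclude that $\mathcal{T}_\Omega$ is $p$-homogeneous iff $\mathcal{T}_C$ is, that is, $\Omega$ is $p$-homogeneous iff $C$ is.

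The one point demanding a little care, and the main (if minor) obstacle, is that $\mathcal{T}_C$ is always rooted at the whole ball $\Zp$ (its level-$0$ vertex), whereas $\mathcal{T}_\Omega$ is rooted at the smallest ball $B^*$ containing $\Omega$. When $B^* \ne \Zp$ the tree $\mathcal{T}_C$ carries an initial segment of vertices strictly above $B^*$, so the bijection above should be read between $\mathcal{T}_\Omega$ and the part of $\mathcal{T}_C$ lying at or below $B^*$. Along this initial segment all of $C$ falls into a single residue class modulo $p^n$ (for every $n$ with $p^{-n} > |B^*|$), so each such vertex has a unique descendant; and a vertex with a single descendant is automatically compatible with $p$-homogeneity. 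Hence deleting this initial segment yields a tree isomorphic to $\mathcal{T}_\Omega$ and alters neither the $p$-homogeneity of $\mathcal{T}_C$ nor the identification, so the stated equivalence is unaffected.
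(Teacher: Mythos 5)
Your argument is correct and is exactly the identification the paper relies on: the text preceding the lemma states that $\mathcal{T}_C$ ``could be identified with'' the finite tree of $\Omega$ and then declares the lemma immediate by definition, which is precisely the level-preserving, $\prec$-preserving bijection you construct. Your additional care about the root (the initial single-descendant chain when $B^*\subsetneq \Zp$) is a valid refinement of a point the paper glosses over, but it does not change the approach.
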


An  algebraic criterion for  the $p$-homogenity  of a set $C\subset \Z/p^{\gamma}\Z$ is presented in the following theorem.

\begin{theorem}\label{keytheorem}
	Let $\gamma$ be a positive integer and let $C\subset \Z/p^\gamma \Z$. Suppose  (i) $\sharp C\le p^ n$ for some integer $1\le n\le \gamma$; (ii) there exist $n$ integers  $1\le i_1 <i_2< \cdots <i_n\le \gamma$ such that     
	\begin{equation}\label{keyeq}
	\sum_{c \in C} e^{2 \pi i c p^{-i_k}} =0  \hbox{ for all } 1\le k\le n.
	\end{equation}
	Then $\sharp C= p^ n$ and $C$  is $p$-homogeneous.  Moreover, $\mathcal{T}_{C}$ is a   $\mathcal{T}_{I, J}$-form tree with $I =\{i_1-1,  i_2-1, \cdots, i_n-1\}$ and $J= \{0,1, \cdots, \gamma-1\}\setminus I$. 
\end{theorem}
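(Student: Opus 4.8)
The plan is to reinterpret each hypothesis (\ref{keyeq}) as a \emph{weighted} vanishing sum of $p^{i_k}$-th roots of unity and to feed the weights into Lemma \ref{root}; this will pin down the branching of $\mathcal{T}_C$ to occur exactly at the levels $i_k-1$. Write $\omega_m=e^{2\pi i/m}$. For each $1\le k\le n$ and each residue $r$ modulo $p^{i_k}$, set
$$
N_k(r):=\sharp\{c\in C:\ c\equiv r\ (\!\!\!\!\mod p^{i_k})\}.
$$
Since $e^{2\pi i c p^{-i_k}}=\omega_{p^{i_k}}^{\,r}$ whenever $c\equiv r\ (\!\!\!\!\mod p^{i_k})$, hypothesis (\ref{keyeq}) says $\sum_{r=0}^{p^{i_k}-1}N_k(r)\,\omega_{p^{i_k}}^{\,r}=0$, i.e. the integer vector $(N_k(0),\dots,N_k(p^{i_k}-1))$ lies in $\mathcal{M}_{p^{i_k}}$. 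Applying Lemma \ref{root} (with $n$ replaced by $i_k$) gives $N_k(r)=N_k(r+jp^{i_k-1})$ for all $j$, i.e. $N_k(r)$ depends only on $r\bmod p^{i_k-1}$. On the tree this means every occupied vertex $s$ at level $i_k-1$ has its $p$ children at level $i_k$ carrying the common count $\tfrac1p\,\sharp\{c\in C:\ c\equiv s\ (\!\!\!\!\mod p^{i_k-1})\}>0$; hence each occupied vertex at level $i_k-1$ has \emph{exactly} $p$ children.

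\textbf{Counting.} Next I would count vertices level by level. Let $v_m:=\sharp(C_{\!\!\!\! \mod{p^m}})$ be the number of vertices of $\mathcal{T}_C$ at level $m$, so that $v_0=1$ (assuming $C\neq\emptyset$, a harmless convention since otherwise (\ref{keyeq}) holds vacuously while $\sharp C=0$) and $v_\gamma=\sharp C$. Every occupied non-leaf vertex has at least one child, because any $c\in C$ lying under it supplies one, and distinct parents have disjoint child sets; hence $v_{m+1}\ge v_m$ for all $0\le m\le\gamma-1$. By the first paragraph, $v_{m+1}=p\,v_m$ whenever $m\in I:=\{i_1-1,\dots,i_n-1\}$. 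Telescoping over the $\gamma$ transitions gives $v_\gamma\ge p^{\sharp I}v_0=p^{n}$, that is $\sharp C\ge p^n$. Combined with hypothesis (i) this forces $\sharp C=p^n$ \emph{and} forces equality $v_{m+1}=v_m$ at every level $m\in J:=\{0,\dots,\gamma-1\}\setminus I$, so each occupied vertex at a $J$-level has exactly one child.

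\textbf{Conclusion and main obstacle.} A subtree in which every vertex at a level in $I$ has exactly $p$ children and every vertex at a level in $J$ has exactly one child is, by definition, of $\mathcal{T}_{I,J}$-form and in particular $p$-homogeneous; this is precisely the asserted structure with $I=\{i_1-1,\dots,i_n-1\}$ and $J=\{0,1,\dots,\gamma-1\}\setminus I$. The one place demanding care is the equality analysis in the counting step: one must verify that the telescoping product is tight, i.e. that the global bounds $p^n\le\sharp C\le p^n$ actually force exactly one child at \emph{every} $J$-level rather than merely on average, which follows because each ratio $v_{m+1}/v_m$ is an integer $\ge 1$. The genuinely substantial input is Lemma \ref{root}: it is what upgrades a single scalar identity (\ref{keyeq}) into uniform, level-by-level $p$-fold branching, and it relies essentially on the prime-power case, where Schoenberg's generators of $\mathcal{M}_{p^n}$ form a basis; without this the weights $N_k(r)$ would not be forced to be constant on the fibres of reduction mod $p^{i_k-1}$.
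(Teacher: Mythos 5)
Your proof is correct, and it takes a noticeably different route from the paper's. The paper argues sequentially from the top level down: it applies Lemma \ref{C} and Lemma \ref{permu} to the condition at $i_n$ to partition $C$ into $p$-element blocks congruent modulo $p^{i_n-1}$, then reduces $C$ modulo $p^{i_{n-1}}$ and repeats, obtaining $p^n\mid \sharp C$ by induction; a delicate point there is justifying that the vanishing-sum condition at level $i_{n-1}$ descends to the reduced set $\widetilde C$, which requires tracking that all blocks have equal cardinality. You instead encode each hypothesis as the statement that the multiplicity vector $(N_k(0),\dots,N_k(p^{i_k}-1))$ lies in $\mathcal{M}_{p^{i_k}}$ and apply Lemma \ref{root} to each level \emph{independently}, concluding directly that every occupied vertex at level $i_k-1$ has all $p$ children occupied; the levels are then glued together by the telescoping count $v_\gamma=v_0\prod_m(v_{m+1}/v_m)\ge p^n$. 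This buys two things: the multiplicity bookkeeping that the paper handles via the equal-size blocks is absorbed into the definition of $N_k$ from the outset (so no reduction step is needed), and the identification of $J$-levels as single-child levels falls out of the equality case of the counting rather than from the inductive structure. One small correction: your parenthetical claim that each ratio $v_{m+1}/v_m$ is an integer is not true in general (a level can have vertices with different numbers of children), but it is also not needed --- the equality analysis only uses that each factor is $\ge 1$ and that the factors at $I$-levels equal $p$, so a product of factors each $\ge 1$ that is forced to equal $1$ over the $J$-levels makes every such factor equal to $1$. With that remark deleted, the argument is complete.
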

\begin{proof}
For simplicity,  let $m=\sharp C$.
	  By Lemma \ref{C} and the equality (\ref{keyeq}) with $k=n$, $p\mid m$ and  $C$ can be decomposed into  $m/p$ subsets $C_1, C_2, \cdots, C_{m/p}$ such that
	 each $C_j$ consists of $p$ points and $$\sum_{c \in C_j} e^{2 \pi i c p^{-i_n}} =0.$$
	 Then, by Lemma \ref{permu}, we have that 
	 \begin{align}\label{ceq1}
	 	c\equiv c^{\prime} +rp^{i_n-1}\ (\!\!\!\!\mod p^{i_n}) \quad \hbox { for some $r\in \{0,1,\cdots, p-1\} $  },
	 \end{align}
	  if  $c$ and  $ c^{\prime}$ lie in the same $C_j$.

	 Now we consider the  equality (\ref{keyeq}) when $k=n-1$. Since $i_{n-1}< i_{n} $,   the equality (\ref{ceq1}) implies the function  $$c\mapsto e^{2 \pi i c p^{-i_{n-1}}}$$ is constant on each $C_j$.
	 For each $c\in C$, denote by $\widetilde{c}$ the point in $\{0,1,2,\cdots, p^{i_{n-1}}-1 \}$ such that 
	 $$\widetilde{c} \equiv c \ (\!\!\!\!\mod p^{i_{n-1}}).$$
	 Observe that $e^{2 \pi i c p^{-i_{n-1}}}=e^{2 \pi i \widetilde{c} p^{-i_{n-1}}}$ and that  $\widetilde{c}=\widetilde{c^{\prime}}$ if $c$ and $c^{\prime}$ lie in  the same $C_j$.
	  Let $\widetilde{C}=C_{\!\!\!\! \mod{p^{i_{n-1}}}}$ be the set of all these $\widetilde{c}$.
	 So  the quality (\ref{keyeq}) with $k=n-1$ is equivalent to 
	 $$
	 \sum_{\widetilde{c} \in \widetilde{C}} e^{2 \pi i \widetilde{c} p^{-i_{n-1}}} =0.
	 $$
	  This equivalence follows from the facts that each $C_j$ contains the same number of elements.

	 Similarly, by Lemma \ref{C}, we have
	 $p\mid \frac{m}{p}$ (i.e. $p^2 \mid m$) and $\widetilde{C}$ can be decomposed into  $m/p^2$ subsets $\widetilde{C}_1, \widetilde{C}_2, \cdots, \widetilde{C}_{m/p^2}$  such that each subset  consists of $p$ elements and $$\sum_{\widetilde{c}\in \widetilde{C}_i}e^{2 \pi i \widetilde{c} p^{-i_{n-1}}} =0.$$
	 By Lemma \ref{permu}, we get that 
	 \begin{align}\label{ceq2}
	 	\widetilde{c} \equiv  \widetilde{c^{\prime}} +rp^{i_{n-1}-1}\ (\!\!\!\!\mod p^{i_{n-1}}) \quad \hbox { for some $r\in \{0,1,\cdots, p-1\}$  },
	 \end{align} if  $\widetilde{c}$ and  $ \widetilde{c^{\prime}}$ lie in same $\widetilde{C}_j$.
	 
	  By induction,  we get $p^n\mid  m$. By the hypotheses $m\le p^n$, we finally get $p^n=m$.

	Furthermore,   the above argument implies that  $\mathcal{T}_{C}$ is a $p$-homogeneous tree  of $\mathcal{T}_{I, J}$-form with $I =\{i_1-1,  i_2-1, \cdots, i_n-1\}$ and $J=\{0,1,\cdots,\gamma-1\}\setminus I$. 
\end{proof}

\subsection{Compact open tiles 
in $\Qp$ 
} 
Recall that $\{x\}$ denotes the  fractional part of $x\in \Q_p$.
Let $$\mathbb{L}:=\left\{\{x\}, x\in \Q_p\right\},$$ which is a complete set of representatives of the cosets of the additive subgroup $\mathbb{Z}_p$. Then $\mathbb{L}$ identified with $(\Qp/\Zp,+)$ has a structure of group with the addition defined by
$$
\{x\}+\{y\}:=\{x+y\}, \quad \forall x,y\in \Q_p.
$$
Notice that $\mathbb{L}$ is not a subgroup of $\Qp$.
Notice that $\mathbb{L}$ is the set of $p$-adic rational numbers
$$
  \sum_{i=-n}^{-1}a_ip^i  \qquad ( n \ge 1;   0\le a_i\le p-1).
$$

Let $A, B$ and $C$ be three subsets of some Abelian group. We say that $A$ is the {\em direct  sum} of $B$ and $C$ if for each $a\in A$, there exist a unique pair $(b,c)\in B\times C$  such that $a=b+c$. Then we write $A=B\oplus C$.

It is obvious that $\Qp=\Zp \oplus \mathbb{L}$, which implies that $\mathbb{L}$ is a tiling complement of $\Z_p$  in $\Q_p$.
For each integer $\gamma$, let $$\mathbb{L}_\gamma:=p^{-\gamma}\mathbb{L}.$$ 
Notice that
$$
\Qp=p^{-\gamma}\Zp \oplus p^{-\gamma} \mathbb{L}=B(0,p^\gamma)\oplus \mathbb{L}_\gamma.
$$
So $\mathbb{L}_\gamma$ is  a  tiling complement of $B(0,p^\gamma)$.

For a positive integer $\gamma$, let $C$ be a  subset of $\Z/p^\gamma\Z \simeq \{0, 1, \cdots, p^{\gamma}-1\}$. 
Let   $\Omega=\bigsqcup_{c\in C}c+ p^\gamma \Zp$,  where  $C$ is considered  as a subset of $\Z_p$. 
The following lemma characterize  a finite union of balls which  tiles $\Z_p$.
\begin{lemma}
The above set  $\Omega$ tiles $\Z_p$   if and only if $C$ tiles $\Z/p^{\gamma}\Z$. 
\end{lemma}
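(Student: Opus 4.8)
The plan is to exploit the fact that $\Omega=\bigsqcup_{c\in C}(c+p^\gamma\Zp)$ is exactly the preimage $\pi^{-1}(C)$ under the canonical projection $\pi:\Zp\to\Z/p^\gamma\Z$, $\pi(x)=x\bmod p^\gamma$, a continuous surjective homomorphism with kernel $p^\gamma\Zp$ whose fibers are the balls $c+p^\gamma\Zp$. The essential observation is that, since $\Omega$ is a union of cosets of $\ker\pi$, the value $1_\Omega(x-t)$ depends only on $\pi(x)-\pi(t)$; precisely $1_\Omega(x-t)=1_C(\pi(x)-\pi(t))$ for $x,t\in\Zp$, where on the right $1_C$ is the indicator of $C$ inside the finite group $\Z/p^\gamma\Z$. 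This identity converts the tiling equation on $\Zp$ into a convolution identity on $\Z/p^\gamma\Z$ and is the engine of both implications.

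For the direction ``$C$ tiles $\Rightarrow\Omega$ tiles'': given $C\oplus D=\Z/p^\gamma\Z$, I would lift $D$ bijectively through $\pi$ to a set $\widetilde D\subset\{0,\dots,p^\gamma-1\}\subset\Zp$ and check directly that $\sum_{t\in\widetilde D}1_\Omega(x-t)=\sum_{d\in D}1_C(\pi(x)-d)=1$ for \emph{every} $x\in\Zp$, the last equality being the unique representation $\pi(x)=c+d$ coming from the direct sum. Hence $\widetilde D$ is a tiling complement of $\Omega$ in $\Zp$, and this holds everywhere, not merely almost everywhere.

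For the converse, suppose $\Omega$ tiles $\Zp$ with complement $T\subset\Zp$. Integrating the tiling identity over $\Zp$ via Tonelli first shows $T$ is finite with $\sharp T=1/|\Omega|=p^\gamma/\sharp C$. I would then sort $T$ by residues, setting $m_d=\sharp\{t\in T:\pi(t)=d\}$, and rewrite the tiling identity as $\sum_{d\in\Z/p^\gamma\Z}m_d\,1_C(u-d)=1$ for every $u\in\Z/p^\gamma\Z$; this is legitimate because each residue class is a ball of positive measure $p^{-\gamma}$, so the almost-everywhere identity on $\Zp$ (which depends only on $\pi(x)$) must in fact hold on every class.

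The main obstacle is the final step: upgrading this weighted (multiplicity) tiling to an honest set tiling. The point to argue carefully is that every multiplicity $m_d$ lies in $\{0,1\}$. Indeed, if some $m_{d_0}\ge 2$, then choosing $u=d_0+c$ for any $c\in C$ makes the single term $m_{d_0}\,1_C(u-d_0)\ge 2$ already exceed the required total of $1$, a contradiction since all terms are nonnegative integers. With all $m_d\in\{0,1\}$, the set $D=\{d:m_d=1\}$ satisfies $\sum_{d\in D}1_C(u-d)=1$ for all $u$, i.e. $C\oplus D=\Z/p^\gamma\Z$, so $C$ tiles the finite group. The remaining care is only the standard measure-theoretic bookkeeping (finiteness of $T$ and the upgrade from a.e.\ to everywhere), which follows from the compactness of $\Zp$ and the positivity of the Haar measure of each residue ball.
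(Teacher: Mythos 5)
Your proof is correct and takes essentially the same route as the paper's, which for both directions simply says ``one can check'': lift the complement $D$ to $\{0,\dots,p^\gamma-1\}$ for one implication and reduce $T$ modulo $p^\gamma$ for the other. Your careful handling of the multiplicities $m_d$ (showing each is $0$ or $1$) and the upgrade from almost-everywhere to everywhere on each residue ball supply exactly the details the paper leaves to the reader.
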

\begin{proof}
Assume that  $C$ tiles $\Z/p^\gamma\Z$, i.e. $\Z/p^\gamma\Z = C\oplus T$ for some $T\subset \Z/p^\gamma\Z$. 
One can check  that $\Z_p=\Omega \oplus T$, which implies  that $\Omega$ tiles $\Z_p$ with tile complement $T$.

Assume that $\Omega$ tiles $\Z_p$ with tiling complement $T$.  
Set $T^* = T_{\mod p^{\gamma}}$. One can check that $\Z/p^{\gamma}\Z=C\oplus T^*$.  So  $C$ tiles $\Z/p^{\gamma}\Z$ with tiling complement $T^* $.
\end{proof}
Notice that for each $a\in \Q_p$, either  $\Omega+a \subset \Z_p$ or $(\Omega+a)\cap \Z_p= \emptyset$. Then  $\Omega$ tiles $\Z_p$ if and only if   it  tiles  $\Q_p$.  So  we immediately   have the  following  
corollary.
\begin{corollary}\label{c}
The set $C$ tiles $\Z/p^{\gamma}\Z$  if and only if $\Omega$ tiles $\Q_p$.
\end{corollary}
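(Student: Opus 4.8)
The plan is to obtain Corollary \ref{c} by chaining two equivalences, the first being the preceding lemma and the second being the passage from tiling $\Zp$ to tiling $\Qp$. By the lemma just proved, $C$ tiles $\Z/p^\gamma\Z$ if and only if $\Omega$ tiles $\Zp$. Hence it suffices to establish that $\Omega$ tiles $\Zp$ if and only if $\Omega$ tiles $\Qp$, and then compose the two biconditionals to conclude.

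For the equivalence between tiling $\Zp$ and tiling $\Qp$, I would exploit the dichotomy recorded just above the statement: for every $a\in\Qp$ one has either $\Omega+a\subset\Zp$ or $(\Omega+a)\cap\Zp=\emptyset$. More precisely, since every ball composing $\Omega$ is a coset of $p^\gamma\Zp$ sitting inside $\Zp$ and $\gamma\ge 1$, the first alternative occurs exactly when $a\in\Zp$; indeed, when $a\notin\Zp$ the fractional part $\{a\}$ has negative valuation and dominates every other term, forcing each point of $\Omega+a$ out of $\Zp$. This localizes the tiling problem at the single coset $\Zp$ of $\Qp/\Zp$.

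Assuming first that $\Omega$ tiles $\Zp$, say $\Zp=\Omega\oplus T_0$ with $T_0\subset\Zp$, I would invoke the decomposition $\Qp=\Zp\oplus\mathbb{L}$ recalled earlier and set $T=T_0+\mathbb{L}$. Every $x\in\Qp$ writes uniquely as $z+\ell$ with $z\in\Zp$ and $\ell\in\mathbb{L}$, then $z$ writes uniquely as $\omega+t_0$ with $\omega\in\Omega$ and $t_0\in T_0$, so $x=\omega+(t_0+\ell)$ with $t_0+\ell$ ranging over $T=T_0+\mathbb{L}$; uniqueness of both steps gives $\Qp=\Omega\oplus T$. Conversely, assuming $\Qp=\Omega\oplus T$, I intersect the partition $\Qp=\bigsqcup_{t\in T}(\Omega+t)$ with $\Zp$. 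By the dichotomy only the translates with $t\in\Zp$ meet $\Zp$, and each such translate lies entirely in $\Zp$; therefore $\Zp=\bigsqcup_{t\in T\cap\Zp}(\Omega+t)$, so $T\cap\Zp$ is a tiling complement of $\Omega$ in $\Zp$. This settles the second equivalence.

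The only nonroutine ingredient is the all-or-nothing interaction of a translate of $\Omega$ with $\Zp$, an ultrametric fact already noted before the statement; granting it, both directions are mere bookkeeping on direct sums. I therefore expect no genuine obstacle, and the corollary follows immediately by composing $C$ tiles $\Z/p^\gamma\Z$ $\iff$ $\Omega$ tiles $\Zp$ $\iff$ $\Omega$ tiles $\Qp$.
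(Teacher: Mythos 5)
Your proof is correct and follows essentially the same route as the paper: the paper likewise combines the preceding lemma with the observation that each translate $\Omega+a$ is either contained in $\Zp$ or disjoint from it, concluding that $\Omega$ tiles $\Zp$ iff it tiles $\Qp$. You merely spell out, via the decomposition $\Qp=\Zp\oplus\mathbb{L}$, the bookkeeping that the paper leaves implicit.
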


 \subsection{$p$-homogeneous discrete set in $\Q_p$}

 Let $E$ be a discrete subset in $\Q_p$.
Recall that    $$I_{E}=\{i \in \Z:  \exists \  x,y \in E \text { such that } v_p(x-y)=i \}.$$   

The following lemma gives the relation between the 
number of elements  and possible distances in a finite subset of $\Q_p$.
 \begin{lemma}\label{estimate}
Let $\Lambda$ be finite subsets of $\Q_p$.
Then
$$ \sharp E \leq p^{\sharp I_{E}}.$$
\end{lemma}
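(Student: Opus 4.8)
The plan is to induct on $k=\sharp I_E$, exploiting the ultrametric tree structure of $\Qp$: peeling off the smallest admissible order distributes the points of $E$ among at most $p$ disjoint sub-balls, and this is exactly the branching that the exponent $\sharp I_E$ records. Since $E$ is finite, $I_E$ is finite and the induction terminates.

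First I would dispose of the base case $k=0$. If $I_E=\emptyset$ there is no pair of distinct points $x,y\in E$ with a well-defined $p$-valuation of their difference, so $E$ has at most one element and $\sharp E\le 1=p^0$. For the inductive step, set $i_1=\min I_E$. Because $i_1$ is the smallest admissible order, the largest distance between two points of $E$ is $p^{-i_1}$; by the non-Archimedean inequality, fixing any $x_0\in E$ as a center shows that all of $E$ lies in the single ball $B=B(x_0,p^{-i_1})$. This ball splits into exactly $p$ disjoint sub-balls of radius $p^{-i_1-1}$, and I would partition $E$ accordingly into nonempty pieces $E_1,\dots,E_m$ with $m\le p$, where $E_j$ collects the points of $E$ lying in the $j$-th occupied sub-ball.

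The key structural observation I would then verify is that $I_{E_j}\subseteq I_E\setminus\{i_1\}$ for every $j$. Indeed, any two points in a common $E_j$ lie in a single ball of radius $p^{-i_1-1}$, so their difference has $p$-valuation $\ge i_1+1>i_1$, which forces $i_1\notin I_{E_j}$; and trivially $I_{E_j}\subseteq I_E$. Hence $\sharp I_{E_j}\le k-1$, and the induction hypothesis gives $\sharp E_j\le p^{\,k-1}$. Summing over the pieces yields $\sharp E=\sum_{j=1}^m \sharp E_j\le m\, p^{\,k-1}\le p\cdot p^{\,k-1}=p^{\sharp I_E}$, which closes the induction.

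I do not expect a genuine obstacle here; the only point requiring care is the verification that restricting to a sub-ball removes precisely the minimal order $i_1$ from the admissible set while the number of occupied children stays bounded by $p$. This is exactly where the ultrametric geometry of $\Qp$ enters, and it is also what makes the estimate $p^{\sharp I_E}$ sharp (equality holds when the tree branches fully at every level of $I_E$).
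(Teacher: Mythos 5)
Your proof is correct and follows essentially the same route as the paper's: induct on $\sharp I_E$, observe that $E$ lies in a single ball of radius $p^{-\min I_E}$, split $E$ among the $p$ sub-balls of radius $p^{-\min I_E-1}$, and note that each piece loses the minimal admissible order. The only cosmetic difference is that you ground the induction at $\sharp I_E=0$ while the paper stops at $\sharp I_E=1$; both are fine.
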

\begin{proof}
Assume that $\sharp I_{E}= n$ and $I_{E}=\{i_1, i_2,\cdots,  i_n\}$ with $ i_1<i_2<\cdots<i_n$. By assumption, $E$ is contained in a ball of radius $ p^{-i_1}$. Each ball of radius  $ p^{-i_1}$ consists of $p$ balls
of radius $p^{-i_1-1}$. So we can decompose $E$ into at most $p$ subsets $E_0,E_1, \cdots, E_{p-1}$
such that
$$|\lambda-\lambda^{\prime}|_p \left\{   \begin{array}{ll}
         =p^{-i_1},& \text{if $\lambda$ and $\lambda^{\prime}$ lie in different $E_i,E_j$}.\\
        <p^{-i_1},& \text{if $\lambda$ and $\lambda^{\prime}$ lie in the same $E_i $}.
       \end{array}\right.$$
By assumption, for each $E_i$, we have $I_{E_i}\subset \{i_2, i_3 \cdots,  i_n\}$.
 We apply the above argument again, with $E$ replaced by each $E_i$.
 By induction, it suffiices to prove the conclusion when $\sharp I_E= 1$.
 Obviously, $\sharp E \leq p$ if $\sharp I_E= 1$, which completes the proof.


\end{proof}

Remark that a  subset  $E$  of $\Q_p$ is uniformly discrete  if $I_{E}$ is bounded from  above. Denote $  \gamma_E$ by  the maximum of $I_{E}.$
For each integer $n$,  set  $I_{E}^{\geq n}:=\{i \in  I_{E}: i\geq  n \}$. 
By Lemma \ref{estimate}, for each ball $B(a,p^{-n})$ with $n\leq \gamma_{E}$, we have 
$$\sharp(E\cap B(a,p^{-n}))\leq p^{\sharp{I_{E}^{\geq n}}}.$$
We say a   discrete set $E$ is  {\em $p$-homogeneous}   if 
$$\sharp(E\cap B(a,p^{-n}))= p^{\sharp{I_{E}^{\geq n}}}\text{ or } 0,$$
for  all integers  $n$ and all $a\in\Q_p$.  
By definition, the following lemma is immediately  obtained.
\begin{lemma}\label{2.13}
A finite set $\Lambda\subset \Q_p$ is $p$-homogeneous if and only if 
$\sharp \Lambda= p^{\sharp I_{\Lambda}}$.
\end{lemma}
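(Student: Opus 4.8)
The plan is to establish both implications, the forward one being essentially a matter of testing the definition on a single well-chosen ball, and the reverse one requiring us to show that the extremal cardinality $\sharp\Lambda=p^{\sharp I_\Lambda}$ forces the branching of $\Lambda$ to be \emph{full} at every admissible level. Throughout I would write $I_\Lambda=\{i_1<i_2<\cdots<i_n\}$, so that $\sharp I_\Lambda=n$, and record that, since $i_1=\min I_\Lambda$ is the smallest admissible order, the whole of $\Lambda$ sits inside one ball $B(a_0,p^{-i_1})$ of diameter $p^{-i_1}$.

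For the implication ``$\Lambda$ is $p$-homogeneous $\Rightarrow\sharp\Lambda=p^{\sharp I_\Lambda}$'', I would simply apply the defining identity at level $n=i_1$ to the ball $B(a_0,p^{-i_1})\supseteq\Lambda$. Since $I_\Lambda^{\geq i_1}=I_\Lambda$ and $\Lambda\cap B(a_0,p^{-i_1})=\Lambda\neq\emptyset$, the alternative value $0$ is excluded, and $p$-homogeneity yields $\sharp\Lambda=p^{\sharp I_\Lambda^{\geq i_1}}=p^{\sharp I_\Lambda}$.

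For the reverse direction, assume $\sharp\Lambda=p^n$. For each integer $m\geq i_1$ let $N_m$ denote the number of balls of radius $p^{-m}$ meeting $\Lambda$; thus $N_{i_1}=1$, whereas $N_m=\sharp\Lambda=p^n$ as soon as $m>i_n=\max I_\Lambda$, because a ball of radius smaller than the minimal interpoint distance $p^{-i_n}$ can isolate at most one point of $\Lambda$. Passing from level $m$ to level $m+1$, each ball splits into at most $p$ children, so $N_{m+1}\leq p\,N_m$; moreover, if $m\notin I_\Lambda$ then no two points of $\Lambda$ lie at distance exactly $p^{-m}$, which forces every ball of radius $p^{-m}$ meeting $\Lambda$ to have a single nonempty child, i.e.\ $N_{m+1}=N_m$. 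Hence the only levels at which $N_m$ can grow are the $n$ levels of $I_\Lambda$, each contributing a factor at most $p$, and comparing the two endpoints gives $p^n=N_{i_n+1}\leq p^{\,n}N_{i_1}=p^n$. Equality must therefore hold at each step, so $N_{m+1}=p\,N_m$ for every $m\in I_\Lambda$, and since each ball has at most $p$ children this means that \emph{every} ball of radius $p^{-m}$ meeting $\Lambda$ splits into exactly $p$ nonempty children.

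This uniform branching is exactly $p$-homogeneity: as the branching factor is $p$ at each $m\in I_\Lambda$ and $1$ at each $m\notin I_\Lambda$ for all balls simultaneously, a ball $B(a,p^{-m})$ meeting $\Lambda$ contains the same number of points as any other, namely $\sharp\Lambda/N_m=p^{\,n-\sharp\{i\in I_\Lambda:\,i<m\}}=p^{\sharp I_\Lambda^{\geq m}}$, while a ball disjoint from $\Lambda$ contains none (the degenerate ranges $m\leq i_1$ and $m>i_n$ are immediate special cases of this formula). I expect the only real obstacle to be this counting step, which upgrades the \emph{global} extremality $\sharp\Lambda=p^{\sharp I_\Lambda}$ into the \emph{local} assertion that every relevant ball branches fully (rather than the a priori weaker statement that some ball does); the telescoping over the levels of $I_\Lambda$ must be made airtight, but the remaining verifications are bookkeeping along the lines of the proof of Lemma \ref{estimate}.
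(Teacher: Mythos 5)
Your proof is correct. Note that the paper itself offers no argument here: it states that the lemma ``is immediately obtained'' from the definition of a $p$-homogeneous discrete set, leaning implicitly on Lemma~\ref{estimate}. The forward direction really is immediate (your one-ball test at level $i_1$ is exactly what is needed), but the reverse direction does require the work you supply: upgrading the global equality $\sharp\Lambda=p^{\sharp I_\Lambda}$ to full branching at every ball and every admissible level. Your telescoping count of the numbers $N_m$ of nonempty balls is a clean way to do this; it is essentially the equality case of the induction in the proof of Lemma~\ref{estimate} (where equality in $\sharp E\le p\cdot p^{n-1}$ forces exactly $p$ nonempty children, each itself extremal with index set $\{i_2,\dots,i_n\}$), reorganized as a level-by-level count rather than a recursion. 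Both routes work; yours has the advantage of making the ``every ball, not just some ball'' point explicit, which is precisely the step the paper glosses over. All the individual assertions you use (that $N_{i_1}=1$, that $N_{m+1}=N_m$ for $m\notin I_\Lambda$, that $N_{i_n+1}=\sharp\Lambda$, and that the point count in a nonempty ball of radius $p^{-m}$ is $\sharp\Lambda/N_m=p^{\sharp I_\Lambda^{\ge m}}$) are correct under the paper's conventions.
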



The following lemma shows that the $p$-homogeneous  discrete sets, under isometric transformations, admit canonical forms.

%
\begin{lemma}\label{iso2}
Let $E$  be a  $p$-homogeneous discrete  subset of $\Q_p$.
Then there exists an isometric  transformation $f:\Q_p\to\Q_p $, such 
that $$f:E\to \widehat{E}:=\left\{\sum_{i\in I_{E}} \beta_i p^i\in \Qp: ~\beta_i\in \{0,1,2,\dots,p-1 \}   \right \}.$$
\end{lemma}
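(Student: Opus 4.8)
The plan is to produce $f$ explicitly as a prefix-dependent permutation of $p$-adic digits, dictated by the branching of the tree attached to $E$. First I would extract the structural content of $p$-homogeneity. Since the prescribed cardinalities $p^{\sharp I_E^{\ge n}}$ are finite, $I_E$ is bounded above; write $\gamma_E=\max I_E$, so each $I_E^{\ge n}=I_E\cap[n,\gamma_E]$ is finite and every count $\sharp(E\cap B(a,p^{-n}))$ is finite. I claim that a ball $B$ of radius $p^{-i}$ meeting $E$ has exactly $p$ children meeting $E$ when $i\in I_E$, and exactly one when $i\notin I_E$. This is the one-line counting step: each occupied child $B'$ satisfies $\sharp(E\cap B')=p^{\sharp I_E^{\ge i+1}}$, so the number of occupied children is $p^{\sharp I_E^{\ge i}-\sharp I_E^{\ge i+1}}$, and the exponent is $1$ if $i\in I_E$ and $0$ otherwise. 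Applying this at level $\gamma_E+1$ gives $\sharp(E\cap B(e,p^{-\gamma_E-1}))=1$ for each $e\in E$, so $E$ is uniformly discrete with separation $p^{-\gamma_E}$ and therefore closed in $\Q_p$; this closedness will be used in the final step.

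Next I would normalize by a translation. Pick $e_0\in E$ and replace $E$ by $E-e_0$ (an isometry of $\Q_p$), so that $0\in E$. I then define a global map $g\colon\Q_p\to\Q_p$ digit by digit. Recall that the ball of radius $p^{-i}$ containing $x=\sum_j x_jp^j$ is determined by the digits $(x_j)_{j<i}$, and its children are indexed by the single digit $x_i$. For each such ball $B$ meeting $E$: if $i\in I_E$, take the identity permutation at $(B,i)$; if $i\notin I_E$, then $B$ has a unique occupied child carrying some digit value $d_B$, and I take any bijection $\pi_{B,i}$ of $\{0,\dots,p-1\}$ with $\pi_{B,i}(d_B)=0$. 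For balls disjoint from $E$, take the identity. Setting $g(x)_i:=\pi_{B,i}(x_i)$, any such prefix-dependent digit permutation is a bijective isometry of $\Q_p$: if $x,y$ first differ at level $k$ they share the prefix $(x_j)_{j<k}$ selecting the level-$k$ permutation, which being injective keeps $g(x)_k\neq g(y)_k$, while the common higher digits force $g(x)_j=g(y)_j$ for $j<k$; hence $v_p(g(x)-g(y))=v_p(x-y)$, and bijectivity follows by inverting the permutations level by level from the lowest nonzero digit upward.

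It then remains to verify $g(E)=\widehat E$, which I expect to be the delicate point. For $e\in E$, at each level $i\notin I_E$ the ball of $E$ containing $e$ has its unique occupied digit $e_i$ sent to $0$, while at levels $i\in I_E$ the digit is unchanged; thus $g(e)=\sum_{i\in I_E}e_ip^i\in\widehat E$. Conversely, given $\eta=\sum_{i\in I_E}\beta_ip^i\in\widehat E$, I would trace a preimage down the tree: at every level $i\in I_E$ choose the child with digit $\beta_i$ (occupied, since all $p$ children are), and at every level $i\notin I_E$ take the unique occupied child. This yields a nested sequence of occupied balls whose radii tend to $0$, hence intersecting in a single point $e$ with $g(e)=\eta$; because $E$ is closed and meets every ball of the sequence, $e\in E$. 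Thus $g(E)=\widehat E$, and $f(x):=g(x-e_0)$ is the desired isometry. The main obstacle is exactly this surjectivity step: closedness of $E$, which I deduced above from $p$-homogeneity, is precisely what guarantees that the limit of the chosen branch belongs to $E$. A pleasant feature of this global construction is that it treats the bounded and unbounded cases of $I_E$ uniformly, requiring no smallest enclosing ball.
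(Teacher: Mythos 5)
Your proof is correct and is essentially the paper's argument in different clothing: after translating so that $0\in E$, both constructions realign the occupied tree of balls level by level, fixing the branching levels ($i\in I_E$) and sending the unique occupied child to the digit $0$ at non-branching levels — the paper packages this as an inductive extension of piecewise translations starting from $p^{\gamma_E}\Z_p$ and working outward, while you package it as a single global prefix-dependent digit permutation. Your explicit verification of $g(E)\supseteq\widehat{E}$ (descending chain of occupied balls plus closedness of $E$ from uniform discreteness) spells out a step the paper leaves implicit in its induction, but the method is the same.
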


\begin{proof}

Without loss of generality, we assume that  $E$ contains $0$. Otherwise, we take a translate   $f_a(x)=x-a$ with some  $a\in E$. So $f_{a}(E)$ contains $0$. 

Recall that  $I_E$ is bounded from above  and $  \gamma_E$ is   the maximum of $I_{E}.$
For integers $n>\gamma_{E}$,  $0$ is the unique  point of $E$ which lies in the balls $p^{n}\Z_p$. Now we are going to construct an isometric transformation on $\Q_p$ by induction.

{\em Step I:} Let $n_0=\gamma_{E}$. Then the set $E \cap p^{n_0}\Z_p$ consists of  $p$ points ${x_0=0, x_1, x_2, \cdots,x_{p-1}}$ such that $ x_j\in j p^{n_0}+p^{n_0+1}\Z_p$ for  $0\leq j\leq p-1$. 
  Define 
$$f_{n_0}(x):=x-x_j+j p^{n_0} \hbox{  if  $x\in j p^{n_0}+p^{n_0+1}\Z_p$ }.$$
 So we obtain an  isometric  map $f_{n_0}: p^{n_0}\Z_p\to   p^{n_0}\Z_p$ such that  $$f_{n_0}(x_j)=j p^{n_0}  \hbox{ for all }  j\in \{0,1,,\cdots,p-1\}.$$

{\em Step II:} Let $n_1=\gamma_{E}-1$.  We distinguish two cases: $$n_1\in I_E\hbox{ or } n_1\notin I_E. $$

If $n_1\in I_{E}$, we decompose $p^{n_1}\Z_p$ as 
$$p^{n_1}\Z_p=\bigsqcup_{j=0}^{p-1}j p^{n_1}+p^{n_0}\Z_p.$$
Applying the similar  argument as the {\em Step I}  to each  $jp^{n_1}+p^{n_0}\Z_p, 0\leq j \leq p-1 $, we obtain a isometric transformation $ g_j$ on  $j p^{n_1}+p^{n_0}\Z_p$ such that  $g_{j}(E \cap (j p^{n_1}+p^{n_0}\Z_p))= \widehat{E}\cap (j p^{n_1}+p^{n_0}\Z_p)$. So we obtain an isometric transformation $f_{n_1}$ on $p^{n_1}\Z_p$ such  that $$f_{n_1}(E \cap p^{n_1}\Z_p)= \widehat{E}\cap p^{n_1}\Z_p.$$

If $n_1\notin I_{E}$, we define 
$$ f_{n_1}(x)=\begin{cases}
f_{n_0}(x), \hbox{ if } x\in p^{n_0}\Z_p\\
x,  \hbox{ if } i\in p^{n_1}\Z_p\setminus p^{n_0}\Z_p
\end{cases}.$$ So  $f_{n_1}$  is an isometric transformation on $p^{n_1}\Z_p$ such  that $$f_{n_1}(E \cap p^{n_1}\Z_p)= \widehat{E}\cap p^{n_1}\Z_p.$$

By induction,  we  obtain an isometric transformation $f:\Q_p\to \Q_p$ such that $f(E)=\widehat{E}$.
%
%
%
%

\end{proof}

 \begin{proposition}\label{iso3}
  Let $E$ and $E^{\prime}$ be two $p$-homogeneous discrete sets in $\Q_p$.  Then 
  $I_{E}=I_{E^{\prime}}$ if and only if there exists an isometric transformation  $f:\Q_p\to \Q_p$ such that $f(E)=E^{\prime}$.
 \end{proposition}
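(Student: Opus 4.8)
The plan is to obtain the proposition as a short consequence of the canonical form supplied by Lemma \ref{iso2}, combined with the basic observation that a bijective isometry of $\Q_p$ preserves the $p$-valuation of differences. Concretely, since every isometry $f$ satisfies $|x-y|_p=|f(x)-f(y)|_p$, we have $v_p(x-y)=v_p(f(x)-f(y))$ for all $x\neq y$; this single fact drives the easy direction, while Lemma \ref{iso2} drives the hard one.

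First I would prove the implication ``isometry $\Rightarrow I_{E}=I_{E^{\prime}}$'', which needs no homogeneity. Suppose $f:\Q_p\to\Q_p$ is an isometric bijection with $f(E)=E^{\prime}$. Given $i\in I_{E}$, choose distinct $x,y\in E$ with $v_p(x-y)=i$; then $f(x),f(y)\in E^{\prime}$ are distinct and $v_p(f(x)-f(y))=v_p(x-y)=i$, so $i\in I_{E^{\prime}}$. Running the same argument with $f^{-1}$ (itself an isometry) gives the reverse inclusion, whence $I_{E}=I_{E^{\prime}}$.

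For the converse I would invoke Lemma \ref{iso2} twice. Assuming $I_{E}=I_{E^{\prime}}$, the set
$$
\widehat{E}=\left\{\sum_{i\in I_{E}}\beta_i p^i:\beta_i\in\{0,1,\dots,p-1\}\right\}
$$
depends only on $I_{E}$, so $I_{E}=I_{E^{\prime}}$ forces $\widehat{E}=\widehat{E^{\prime}}$. By Lemma \ref{iso2} there are isometric bijections $g,h:\Q_p\to\Q_p$ with $g(E)=\widehat{E}$ and $h(E^{\prime})=\widehat{E^{\prime}}=\widehat{E}$. Then $f:=h^{-1}\circ g$ is an isometric bijection of $\Q_p$ (the inverse of an isometry is an isometry, and a composition of isometries is an isometry), and $f(E)=h^{-1}(g(E))=h^{-1}(\widehat{E})=E^{\prime}$, as required.

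The only points that require a moment's care, rather than any genuine obstacle, are checking that $\widehat{E}$ is a function of $I_{E}$ alone (immediate from its defining formula) and that the class of isometric bijections of $\Q_p$ is closed under inversion and composition (standard). Since Lemma \ref{iso2} already encapsulates the substantive work of transporting a $p$-homogeneous set to its normal form, no further difficulty arises.
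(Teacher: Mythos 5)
Your proof is correct and takes essentially the same route as the paper: the easy direction from the fact that an isometry preserves $|x-y|_p$ (hence $v_p$ of differences), and the converse by applying Lemma \ref{iso2} to both $E$ and $E^{\prime}$ and composing $h^{-1}\circ g$. The one detail the paper makes explicit that you take for granted is that the map supplied by Lemma \ref{iso2} is genuinely a bijection, so that $h^{-1}$ exists; the paper justifies this by noting that isometric transformations of compact metric spaces are surjective and $\Q_p=\bigcup_{n\ge 0}p^{-n}\Z_p$.
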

 \begin{proof}
 	The `if' part of the statement is obvious. 
	
	We are going to prove the `only if ' part.
 	 We claim that  the isometric transformation constructed in Lemma  \ref{iso2}  is  a  bijection. Actually, any isometric transformation  of $\Q_p$ is surjective, which can be deduced from the fact that isometric transformations on compact metric spaces are surjective and  $\Q_p=\bigcup_{n \geq 0}p^{-n}\Zp$. Thus, by Lemma \ref{iso2}, we have two isometric bijections $f_1,f_2:\Qp\to \Qp$ such that 
 	$$
 	f_1(E)=f_2(E')=\widehat{E}=\left\{\sum_{i\in I_{E}} \beta_i p^i\in \Qp: ~\beta_i\in \{0,1,2,\dots,p-1 \}   \right \},
 	$$
 	since $I_{E}=I_{E^{\prime}}$.  Therefore,
 	$ f_{2}^{-1}\circ f_1$
 	is an  isometric transformation  of $\Q_p$, which maps $E$ onto $E^{\prime}$.
 \end{proof}

\section{Compact open spectral sets in $\mathbb{Q}_p$}
This section is devoted to the proof of Theorem 1.1.

Let $\Omega$ be a compact open set in $\mathbb{Q}_p$. 
Therefore, without loss of generality,
we  assume that $\Omega$ is contained in $\mathbb{Z}_p$ and $0\in\Omega$. 
Let $\Omega$ be of the form $$\Omega=\bigsqcup_{c\in C} (c+ p^\gamma \mathbb{Z}_p),$$
where $\gamma \ge 1$ is an integer and $C\subset \{0,1,\cdots, p^{\gamma}-1\}$.

\subsection{Homogeneity implies spectral property} \label{HomogeneitySpectral}
Assume that $\Omega$ is a $p$-homogeneous compact open set contained in
$\mathbb{Z}_p$ and containing $0$. 
We are going to show that $\Omega$
is a spectral set by constructing a spectrum for $\Omega$. 
Let $I_{\Omega}$ be the  structure  set of $\Omega$. Then  $I_\Omega$ determines a finite $p$-homogeneous tree of type 
$\mathcal{T}_{I,J}$ with $I=I_{\Omega}\cap \{0,1,\cdots, \gamma-1\}$ and $J= \{0,1,\cdots, \gamma-1\}\setminus I$. 
 Define 
  $$
  \Lambda = (\sum_{i\in I} \Z/p\Z \cdot p^{-i-1}) + \mathbb{L}_\gamma.
  $$
    We claim that  $(\Omega, \Lambda)$ is a  spectral pair.
To prove the claim, it suffices to check the equality (\ref{equ}) in Lemma \ref{Thm-SpectralMeasure}. The term on the left hand side of  the equality (\ref{equ})
is equal to
\begin{align}\label{cc}
 \sum_{\lambda \in \Lambda} |\widehat{1_{\Omega}}(\lambda-\xi)|^2
&=\frac{1}{p^{2\gamma}}\sum_{\lambda \in \Lambda}1_{B(\xi,p^\gamma)}(\lambda)\sum_{c,c^{\prime}\in C}\chi((c-c^{\prime})(\lambda-\xi))\nonumber \\
&=\frac{1}{p^{2\gamma}}\sum_{\lambda \in \Lambda\cap B(\xi,p^\gamma) }\ \ \sum_{c,c^{\prime}\in C}\chi((c-c^{\prime})(\lambda-\xi)).
\end{align}
 Let $\xi =\sum_{i=v_p(\xi)}^{\infty} \xi_{i}p^i \in \Qp$.  We set $\xi_{i}=0$ if $i< v_p(\xi)$,
 so that $\xi =\sum_{i=-\infty}^{\infty} \xi_{i}p^i$. Let 
 
 $$
 \xi_{\star}=\sum_{i=-\gamma }^{-1}\xi_{i}p^i,\quad \xi^{\prime}=\sum_{j=v_p(\xi)}^{-\gamma-1}\xi_{j}p^j. 
 $$
Remark   that $\xi^{\prime}$ is $0$ when $ v_p(\xi)>-\gamma-1$.
  Then we have    $\{\xi\}=\xi_{\star}+\xi^{\prime}$ and   $|\xi-\xi^{\prime}|_p \leq p^\gamma$  which implies  $B(\xi,p^\gamma)= B(\xi^{\prime},p^\gamma)$. 
 For   $\lambda=\sum _{i=0}^{n}a_i p^{-i-1} \in \Lambda$,  observe that  $|\lambda-\xi|_p\leq p^\gamma$ if and only if
 $a_i= \xi_{-i-1}$  for all $i\geq \gamma$.
So we get
\begin{align*}
\Lambda \cap B(\xi,p^\gamma) &=\xi'+\sum_{i\in I} \Z/p\Z \cdot p^{-i-1},
 \end{align*}
 which consists of $p^{\sharp I} $ elements. Using this last fact, the fact $|\Omega|^2=p^{-2(\gamma-\sharp I)}$
 and the equality (\ref{cc}), to prove the equality (\ref{equ}), we have only to prove that
 \begin{equation}\label{cc2}
 \sum_{\lambda \in \Lambda \cap B(\xi,p^\gamma) } \chi((c-c^{\prime})(\lambda-\xi))=0  \quad \hbox{ for } c\neq c^{\prime}.
 \end{equation}
 The possible distances between $c$ and $c^{\prime}$ are of the form $p^{-i}$ with $i\in I$.
 Fix two different $c$ and $c^{\prime}$ in $C$.   Write $$c-c^{\prime}=p^{i_0}s,$$ for some $i_0\in I$ and some $s\in\Zp^{\times}$.
 Set $I_{i_0}=I\cap[i_0,\gamma-1]$.
 For  any $\lambda =\sum_{i\in I}a_ip^{-i-1} +\xi^{\prime}\in \Lambda\cap B(\xi, p^\gamma)$, we have
 \begin{align*}
 (c-c^{\prime})(\lambda-\xi)&\equiv (c-c^{\prime}) (\sum_{i\in I}a_ip^{-i-1} -\xi_\star)\quad (\!\!\!\!\mod \Zp)\\
 &\equiv -\xi_\star (c-c^{\prime}) + \frac{ s  \sum_{i\in I_{i_0}}a_i p^{\gamma-i-1}}{p^{\gamma-i_0}} \quad (\!\!\!\!\mod \Zp)
 \end{align*}
so that
 $$ \chi((c-c^{\prime})(\lambda-\xi))=\chi(-\xi_\star (c-c^{\prime}))  \prod_{i\in I_{i_0}}\chi\Big(\frac{ s  a_i}{p^{i-i_0+1}}\Big).$$
 From this, we observe that as function of $\lambda$,
$ \chi((c-c^{\prime})(\lambda-\xi))$ only depends on the coordinates
$a_i$ of $\lambda$ with $i\in I_{i_0}$. Then,
  by the definition of $\Lambda$,
  for each  $\lambda =\sum_{i\in I_0}a_ip^{-i-1} +\xi^{\prime}\in \Lambda\cap B(\xi, p^\gamma)$, there are  $p^{\sharp(I\setminus I_{i_0})}$ points  $\lambda'\in \Lambda \cap B(\xi,p^\gamma)$ such that $\chi((c-c^{\prime})(\lambda-\xi)) = \chi((c-c^{\prime})(\lambda^{\prime}-\xi))$.
 So we get
 \begin{align*}
 \sum_{\lambda \in \Lambda \cap B(\xi,p^\gamma) } \chi((c-c^{\prime})(\lambda-\xi))
 = p^{\sharp(I\setminus I_{i_0})} \chi(-\xi_\star(c-c^{\prime}))\prod_{i \in I_{i_0}} \sum_{a_i=0}^{p-1} \chi\Big(\frac{ s  a_i}{p^{i-i_0+1}}\Big).
  \end{align*}
  Therefore, we shall prove (\ref{cc2}) if we prove that the factor corresponding to
    $i=i_0$ on the right hand side of the last equality is zero, i.e.
    \begin{align}\label{zero}
\sum_{a_{i_0}=0}^{p-1}\chi\Big(\frac{  s a_{i_0} }{p}\Big)=0.
\end{align}
This is really true because of Lemma \ref{multi} and
$$\sum_{a_{i_0}=0}^{p-1}\chi\Big(\frac{  a_{i_0} }{p}\Big)=0.$$
Thus we have proved that $\Omega$ is a spectral set.

\subsection{Spectral property implies homogeneity}\label{spectraltohomo}

Assume that $\Lambda$ is  a spectrum of $\Omega$. We are going to show that $\Omega$ is $p$-homogeneous.

 By Lemma \ref{number},  we have  $\sharp(B(0,{p^\gamma})\cap \Lambda)= \sharp C$.
 For simplicity, let $\sharp C= m$.
Set $$D=\left\{|\lambda-\lambda^{\prime}|_p: \lambda,\lambda^{\prime}\in B(0,{p^\gamma})\cap \Lambda  \hbox{ and }  \lambda \neq \lambda^{\prime}\right\}$$
be the set of possible distances of different spectrum points in the ball $B(0,{p^\gamma})$.  Notice that $\log_p(D)\subset \{1,2, \cdots, \gamma\}$.
Assume that $\sharp D= n$ and $$\log_p(D)=\{i_1,i_2,\cdots, i_n\}\  \hbox{ with}\ \
1\leq i_1<i_2<\cdots<i_n\leq \gamma.$$
Observe that
\begin{align*}
\langle \chi_{\lambda}, \chi_{\lambda^\prime} \rangle 
&=\frac{1}{p^{\gamma}}1_{B(0,p^{\gamma})}(\lambda-\lambda^{\prime})\sum_{c\in C}\chi(-c(\lambda-\lambda^{\prime})).
\end{align*}
So, the orthogonality of $\{\chi_{\lambda}\}_{\lambda\in \Lambda}$ implies
\begin{equation}\label{OT}
\sum_{c\in C}\chi(-c(\lambda-\lambda^{\prime}))=0\qquad
(\forall \lambda, \lambda^{\prime} \in \Lambda,  0<|\lambda-\lambda'|_p\le p^{\gamma}).
\end{equation}
By (\ref{OT}) and Lemma \ref{multi}, it deduces that $C$ satisfies the conditions in Theorem \ref{keytheorem}. Therefore, $C$ is a $p$-homogeneous tree.

On the other hand,  $\sharp(B(0,{p^\gamma})\cap \Lambda)= \sharp C=p^n$. Thus, by  Lemma  \ref{2.13}, the discrete set   $B(0,{p^\gamma})\cap \Lambda$ is  $p$-homogeneous  with $I_{B(0,{p^\gamma})\cap \Lambda}=-\log_p(D)$.

\subsection{Equivalence between homogeneity and tiling}

Due to Lemma \ref{b} and  Corollary \ref{c}, it is sufficient to prove that $C$ is a tile of $\Z/p^{\gamma}\Z$ if and only if $\mathcal{T}_{C}$ is a $p$-homogeneous tree.   We shall finish the proof when we have proved the equivalence between the tiling property  and the $p$-homogeneity of a set  in $\mathbb{Z}/p^\gamma\mathbb{Z}$. This will be done in the next section.

\section{Spectral sets and tiles in $\mathbb{Z}/p^\gamma\mathbb{Z}$}
In this section, we characterize spectral sets and tiles in the finite group $\mathbb{Z}/p^\gamma\mathbb{Z}$. Spectral sets and tiles in this group are the same which are characterized by  a simple geometric
property that we qualify as $p$-homogeneity. They can also be characterized by their
Fourier transforms.

Recall that the characters of $\mathbb{Z}/p^\gamma \mathbb{Z}$ are  the functions
$$x\mapsto e^{ \frac{2\pi i kx}{p^\gamma}}, \quad k \in \mathbb{Z}/p^\gamma \mathbb{Z}.$$ 
We identify $\mathbb{Z}/p^\gamma \mathbb{Z}$ to $\{0,1, \cdots, p^{\gamma}-1\}$ which can be viewed as a subset of $\Q_p$. The restriction  of the characters 
$\chi_{\frac{k}{p^{\gamma}}}, k=0,1,2, \cdots,p^{\gamma}-1$  of $\Q_p$ on $\Z/p^{\gamma}\Z$ are exactly the characters of $\Z/p^{\gamma}\Z$.

For a subset $C$ of $\Z/p^{\gamma}\Z$ which is viewed as a subset of $\Q_p$, let  $\delta_C$ be the uniform probability measure  in $\Q_p$.  By definition,  we immediately  have the  following lemma.
\begin{lemma}\label{spectralmeasure}
Let $C, \Lambda \subset\{0,1,2,\cdots, p^{\gamma}-1\}$. Then $(C,\Lambda)$ is a spectral pair  in $\Z/p^{\gamma}\Z$ if and only if $(\delta_{C}, \frac{1}{p^{\gamma}}\Lambda)$ is a spectral pair in $\Q_p$. 
\end{lemma}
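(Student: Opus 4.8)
The final statement to prove is Lemma 4.2 (labeled \texttt{spectralmeasure}), which asserts that for subsets $C,\Lambda\subset\{0,1,\dots,p^\gamma-1\}$, the pair $(C,\Lambda)$ is a spectral pair in $\Z/p^\gamma\Z$ if and only if $(\delta_C,\frac1{p^\gamma}\Lambda)$ is a spectral pair in $\Q_p$. Let me think about how to prove this.

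The core object is the orthonormal basis condition. In $\Z/p^\gamma\Z$, the characters are $x\mapsto e^{2\pi i kx/p^\gamma}$. Saying $(C,\Lambda)$ is a spectral pair means $\{e^{2\pi i \lambda x/p^\gamma}\}_{\lambda\in\Lambda}$, restricted to $x\in C$, forms an orthonormal basis of $\ell^2(C)$ (with the uniform/counting inner product). On the $\Q_p$ side, the uniform measure $\delta_C=\frac1{\sharp C}\sum_{c\in C}\delta_c$ and the frequencies $\frac{\lambda}{p^\gamma}$ give characters $\chi_{\lambda/p^\gamma}(c)=\chi(\lambda c/p^\gamma)=e^{2\pi i\{\lambda c/p^\gamma\}}$. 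Since $c\in\{0,\dots,p^\gamma-1\}$ and $\lambda\in\{0,\dots,p^\gamma-1\}$, the product $\lambda c/p^\gamma$ has fractional part agreeing with $e^{2\pi i\lambda c/p^\gamma}$. So the two families of functions on $C$ coincide pointwise.

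The plan is to make this correspondence of characters precise and then observe that the orthonormal-basis condition transfers directly. First I would record the key identity: for $c\in\{0,1,\dots,p^\gamma-1\}$ and $\lambda\in\{0,1,\dots,p^\gamma-1\}$, one has $\chi_{\lambda/p^\gamma}(c)=\chi(\lambda c/p^\gamma)=e^{2\pi i\{\lambda c/p^\gamma\}}=e^{2\pi i\lambda c/p^\gamma}$, because $\lambda c/p^\gamma$ is a rational number whose only $p$-adic contribution to the fractional part is the residue of $\lambda c$ modulo $p^\gamma$. Thus the restriction of the $\Q_p$-character $\chi_{\lambda/p^\gamma}$ to the finite set $C$ equals the $\Z/p^\gamma\Z$-character $x\mapsto e^{2\pi i\lambda x/p^\gamma}$ evaluated at $c$. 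Next I would compute the two inner products. The $L^2(\delta_C)$ inner product of two such characters is $\langle\chi_{\lambda/p^\gamma},\chi_{\lambda'/p^\gamma}\rangle_{\delta_C}=\frac1{\sharp C}\sum_{c\in C}e^{2\pi i(\lambda-\lambda')c/p^\gamma}$, which is, up to the normalizing factor $\frac1{\sharp C}$, exactly the $\ell^2(C)$ inner product of the corresponding characters of $\Z/p^\gamma\Z$. Hence orthogonality of the family in $L^2(\delta_C)$ is equivalent to orthogonality in $\ell^2(C)$.

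Finally I would match cardinalities and completeness. Orthonormality forces $\sharp\Lambda\le\sharp C$ in both pictures, and a finite orthonormal family in a $\sharp C$-dimensional space is a basis exactly when $\sharp\Lambda=\sharp C$; this counting criterion is identical on both sides once the inner products agree. Therefore $(C,\Lambda)$ is a spectral pair in $\Z/p^\gamma\Z$ if and only if $\{\chi_{\lambda/p^\gamma}\}_{\lambda\in\Lambda}$ is an orthonormal basis of $L^2(\delta_C)$, i.e. $(\delta_C,\frac1{p^\gamma}\Lambda)$ is a spectral pair in $\Q_p$. I do not expect any serious obstacle here: the lemma is essentially a bookkeeping identification, and the only point requiring a touch of care is verifying the character identity $e^{2\pi i\{\lambda c/p^\gamma\}}=e^{2\pi i\lambda c/p^\gamma}$ for integer representatives, together with checking that reduction modulo $p^\gamma$ does not collapse distinct characters on $C$ in a way that spoils the counting. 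Because both statements reduce to the same system of equalities $\frac1{\sharp C}\sum_{c\in C}e^{2\pi i(\lambda-\lambda')c/p^\gamma}=\delta_{\lambda,\lambda'}$, the equivalence is immediate, which is why the authors can assert it holds ``by definition.''
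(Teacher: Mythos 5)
Your proposal is correct and matches the paper's (essentially nonexistent) proof: the paper asserts the lemma "by definition," having just noted that the restrictions of the characters $\chi_{k/p^{\gamma}}$ of $\Q_p$ to $\Z/p^{\gamma}\Z$ are exactly the characters of $\Z/p^{\gamma}\Z$. Your writeup simply spells out that identification (the identity $e^{2\pi i\{\lambda c/p^{\gamma}\}}=e^{2\pi i\lambda c/p^{\gamma}}$, the matching of inner products up to the harmless factor $1/\sharp C$, and the dimension count), which is precisely the bookkeeping the authors leave implicit.
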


 The Fourier transform of a function $f$ defined on $\mathbb{Z}/p^\gamma \mathbb{Z}$
is defined as follows
$$
\widehat{f}(k)=\sum_{x\in  \mathbb{Z}/p^\gamma \mathbb{Z}}
f(x)e^{- \frac{2\pi i kx}{p^\gamma}}, \qquad (\forall k\in \mathbb{Z}/p^\gamma \mathbb{Z}).
$$

%

\begin{theorem}\label{Z/Zp}
	Let $C\subset \mathbb{Z}/p^\gamma\mathbb{Z}$ and $\mathcal{T}_{C}$ be the associated tree. The following  are equivalent.
	\begin{itemize}
		\item[(1)] $\mathcal{T}_C$ is a $p$-homogeneous tree.
		\item[(2)] For any $ 1\leq i\leq \gamma,  \sharp(C_{\!\!\!\! \mod{p^i}})=p^{k_i}$, for some $k_i\in \mathbb{N}$.
		\item[(3)] There exists a subset $I\subset \mathbb{N}$ such that $\sharp I=\log_p(\sharp C)$ and  $\widehat{1_C}(p^{\ell}) =0$ for $\ell\in I$.
		\item[(4)] There exists a subset $I\subset \mathbb{N}$ such that $\sharp I\ge \log_p(\sharp C)$ and  $\widehat{1_C}(p^{\ell}) =0$ for $\ell\in I$.
		\item[(5)] $C$ is a tile of $\mathbb{Z}/p^\gamma \mathbb{Z}$.
		\item[(6)] $C$ is a spectral set in $\mathbb{Z}/p^\gamma\mathbb{Z}$.
	\end{itemize}
\end{theorem}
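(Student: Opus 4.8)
The plan is to prove Theorem \ref{Z/Zp} as a cycle of implications, using the tree structure and the algebraic criterion in Theorem \ref{keytheorem} as the central tools. The statement bundles together six characterizations, so rather than proving each pair of equivalences directly, I would arrange them in a logically economical loop. A natural ordering is $(1)\Leftrightarrow(2)$ (a purely combinatorial unpacking of the definition of $p$-homogeneity), then $(1)\Rightarrow(3)\Rightarrow(4)\Rightarrow(1)$ (linking homogeneity to the vanishing of the Fourier transform at prime-power frequencies), and finally closing with $(5)$ and $(6)$ by showing both tiling and spectrality are squeezed between homogeneity and the Fourier condition.

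First I would establish $(1)\Leftrightarrow(2)$. By the construction of $\mathcal{T}_C$, the vertices at level $i$ are exactly $C_{\!\!\!\! \mod{p^i}}$, and the tree is $p$-homogeneous precisely when the number of descendants of each vertex at a given level is a common value in $\{1,p\}$. Passing from level $i$ to level $i+1$, the ratio $\sharp(C_{\!\!\!\! \mod{p^{i+1}}})/\sharp(C_{\!\!\!\! \mod{p^i}})$ is this common branching number; so homogeneity is equivalent to each such ratio being $1$ or $p$, which by induction is equivalent to every $\sharp(C_{\!\!\!\! \mod{p^i}})$ being a power of $p$. For the Fourier link, note that $\widehat{1_C}(p^\ell) = \sum_{c\in C} e^{-2\pi i c p^\ell/p^\gamma} = \sum_{c\in C} e^{-2\pi i c/p^{\gamma-\ell}}$, so the condition $\widehat{1_C}(p^\ell)=0$ is exactly the hypothesis (\ref{keyeq}) of Theorem \ref{keytheorem} with modulus $p^{\gamma-\ell}$. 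Thus $(4)\Rightarrow(1)$ is immediate: if there are $\sharp I\ge \log_p(\sharp C)$ such vanishing frequencies, Theorem \ref{keytheorem} (with $n=\sharp I$ and indices $i_k=\gamma-\ell$) forces $\sharp C = p^{\sharp I}$ and $\mathcal{T}_C$ to be $p$-homogeneous. Conversely $(1)\Rightarrow(3)$: if $\mathcal{T}_C$ is $p$-homogeneous of $\mathcal{T}_{I,J}$-form, then at each branching level the $p$ children split evenly, which yields the vanishing of the corresponding root-of-unity sum (via Lemma \ref{C} applied levelwise), producing exactly $\log_p(\sharp C)$ vanishing frequencies. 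The implication $(3)\Rightarrow(4)$ is trivial.

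To bring in $(5)$ and $(6)$, I would show $(1)\Rightarrow(5)$ and $(1)\Rightarrow(6)$, then close the loop with $(5)\Rightarrow(2)$ and $(6)\Rightarrow(4)$. For $(1)\Rightarrow(6)$ I can appeal directly to the construction in Section \ref{HomogeneitySpectral}: via Lemma \ref{b} and Lemma \ref{spectralmeasure}, a $p$-homogeneous $C$ gives a $p$-homogeneous compact open $\Omega$, which was shown there to be spectral, hence $C$ is spectral in $\Z/p^\gamma\Z$. For $(1)\Rightarrow(5)$, a $\mathcal{T}_{I,J}$-form tree admits an explicit tiling complement by filling in the ``missing'' digits at positions $i\in J$, i.e. taking $T=\sum_{i\in J}\Z/p\Z\cdot p^i$, which gives $\Z/p^\gamma\Z = C\oplus T$ by construction of the tree. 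For the return arrows, $(6)\Rightarrow(4)$ follows from the orthogonality argument of Section \ref{spectraltohomo}: a spectrum $\Lambda$ forces the $\sharp C$ orthogonality relations (\ref{OT}), and by Lemma \ref{multi} these translate into $\log_p(\sharp C)$ vanishing prime-power Fourier frequencies, which is precisely $(4)$. Finally $(5)\Rightarrow(2)$ follows from the standard fact that for a tile of a cyclic $p$-group the cardinalities of the projections $C_{\!\!\!\! \mod{p^i}}$ are forced to be powers of $p$; this can be read off from the factorization of the mask polynomial into cyclotomic factors, or derived directly from $\sharp C\cdot\sharp T=p^\gamma$ together with a levelwise counting argument.

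The main obstacle I anticipate is the implication $(5)\Rightarrow(2)$, the ``tiling implies homogeneity'' direction, since it is the one that does not follow by merely unpacking definitions or re-citing the earlier spectral construction. The delicate point is that a tiling of $\Z/p^\gamma\Z$ need not be periodic or structured in an obvious way, so one cannot simply read off the branching numbers from a product decomposition. The cleanest route is probably to argue at the level of Fourier transforms: $C\oplus T=\Z/p^\gamma\Z$ means $\widehat{1_C}(k)\widehat{1_T}(k)=0$ for every nonzero $k$, and since the nonzero frequencies of a cyclic $p$-group are exactly the $p^\ell u$ with $u\in\Zp^\times$, Lemma \ref{multi} lets me propagate each vanishing from a single $p^\ell$ to its whole orbit; a counting of how many levels must carry a vanishing factor then forces $\sharp C$ to be a power of $p$ and, refining the argument to the projections, gives $(2)$. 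Making this counting rigorous — ensuring that enough distinct prime-power frequencies vanish for $C$ rather than for $T$ — is the step that will require the most care.
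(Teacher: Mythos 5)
Your overall architecture coincides with the paper's: $(1)\Leftrightarrow(2)$ by unpacking the tree, a chain through $(3)$ and $(4)$ driven by Theorem \ref{keytheorem}, the explicit digit complement for $(1)\Rightarrow(5)$, and the spectral equivalence recycled from Sections \ref{HomogeneitySpectral} and \ref{spectraltohomo} via Lemmas \ref{b} and \ref{spectralmeasure}. Your direct two-way argument for $(1)\Leftrightarrow(2)$ is sound (if the ratio of consecutive level cardinalities is a power of $p$ lying in $[1,p]$, then every vertex at that level is forced to have the same branching number) and is slightly more economical than the paper's detour, which only proves $(1)\Rightarrow(2)$ directly and recovers the converse through $(2)\Rightarrow(3)\Rightarrow(4)\Rightarrow(1)$. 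Your $(6)\Rightarrow(4)$ is also fine, provided you remember that the argument of Section \ref{spectraltohomo} needs Lemmas \ref{number} and \ref{estimate} to certify $\sharp C\le p^{\sharp D}$ before Theorem \ref{keytheorem} applies.

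The one genuine gap is exactly the step you flag yourself: getting from tiling back to homogeneity. You correctly reduce $C\oplus T=\Z/p^\gamma\Z$ to the statement that each $p^\ell$, $0\le\ell\le\gamma-1$, is a zero of $\widehat{1_C}$ or of $\widehat{1_T}$, but then stop at ``a counting \dots forces $\sharp C$ to be a power of $p$'' without explaining how to guarantee that enough of these zeros land on $C$ rather than on $T$. (The orbit propagation via Lemma \ref{multi} does not help here: distinct prime powers already lie in distinct unit orbits, and the whole issue is which of the two factors absorbs each one.) The paper closes this with a short dichotomy you should adopt. Let $C_z$ and $T_z$ be the sets of exponents $\ell\in\{0,\dots,\gamma-1\}$ at which $\widehat{1_C}$, respectively $\widehat{1_T}$, vanishes. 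Then $\sharp C_z+\sharp T_z\ge\gamma=\log_p\sharp C+\log_p\sharp T$, so at least one of the inequalities $\sharp C_z\ge\log_p\sharp C$ or $\sharp T_z\ge\log_p\sharp T$ holds. In the first case you have condition $(4)$ for $C$ and are done. In the second case, $T$ satisfies the hypotheses of Theorem \ref{keytheorem} with $n=\sharp T_z$ (since $\sharp T\le p^{n}$), and its conclusion $\sharp T=p^{\sharp T_z}$ upgrades the inequality to the equality $\sharp T_z=\log_p\sharp T$; feeding this back into $\sharp C_z+\sharp T_z\ge\gamma$ yields $\sharp C_z\ge\log_p\sharp C$ after all. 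This is the missing counting; with it your loop closes, and aiming at $(4)$ rather than $(2)$ makes the landing immediate.
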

\begin{proof}
	$(1)\Rightarrow(2)$: It follows from the  definition of $p$-homogeneous subtree.
	
	$(2)\Rightarrow(3)$: From $\sharp C=p^{k_\gamma}$ we get $\log_p(\sharp C)=k_\gamma$.  For simplicity, denote by  
	$C_j=C_{\!\!\!\!\mod p^{j}}$ for $1\le j\le \gamma$.
	
	Define
	$$
	I:=\{\gamma-j: \sharp C_{j-1}< \sharp C_{j}\}\subset \{0,1,\cdots,\gamma-1 \}, 1\le j\le \gamma.
	$$
	Then $\sharp I=k_\gamma$. For any $j$ such that $\gamma-j\in I$, we have $\sharp C_j=p \sharp  C_{j-1}$. More precisely,
	$$
	C_j=C_{j-1}+\{0,1, 2,\dots, p-1\} p^{j-1}.
	$$
	Thus
	\begin{align*}
		\widehat{1_C}(p^{\gamma-j})
		&=\sum_{t\in  C}
		e^{- \frac{2\pi i }{p^{j}}t}=p^{k_{\gamma}-k_j}\sum_{t\in  C_{j}}e^{- \frac{2\pi i }{p^{j}}t}\\
		&=p^{k_{\gamma}-k_j}\sum_{t\in  C_{j-1}}\sum_{l=0}^{p-1}
		e^{- \frac{2\pi i }{p^{j}}(t+lp^{j-1})}\\
		&=p^{k_{\gamma}-k_j} \sum_{t\in  C_{j-1}}e^{- \frac{2\pi i }{p^{j}}t}\sum_{l=0}^{p-1}
		e^{- \frac{2\pi i }{p}l}
		=0,
	\end{align*}
	i.e. $\widehat{1_C}(p^{\ell}) =0$ for $\ell\in I$.
	
	$(3)\Rightarrow(4)$: Obviously.
	
	$(4)\Rightarrow(1)$: Observe that $ \widehat{{1}_{C}}(p^\ell) =0$ means
	$$
	\sum_{t\in  C}
	e^{- \frac{2\pi i t}{p^{\gamma-\ell}}} =0,
	$$
which is exactly the condition in Theorem \ref{keytheorem}.
Therefore we can prove that $\sharp I=\log_p(\sharp C)$ and $\mathcal{T}_C$ is a $p$-homogeneous tree.
	
	$(1)\Rightarrow(5)$: Assume that $\mathcal{T}_{C}$ is a $p$-homogeneous tree  $\mathcal{T}_{I,J}$. It is obvious that $C$ has the tiling property
	$C\oplus S=\mathbb{Z}/p^\gamma \mathbb{Z}$ with the tiling complement
	$$S=\left\{\sum_{i\in J}a_ip^i: a_i\in \{ 0,1,\dots,p-1 \} \right\}.$$
	
	$(5)\Rightarrow(4)$: Assume that $C$ is a tile of $\mathbb{Z}/p^\gamma \mathbb{Z}$. That is to say, there exists a set $S\subset \mathbb{Z}/p^{\gamma}\mathbb{Z}$ such that $C\oplus S=\mathbb{Z}/p^{\gamma}\mathbb{Z}$. Since $\sharp (C\oplus S)= \sharp C  \cdot \sharp S$,
	$\sharp C$ divides $\sharp (\mathbb{Z}/p^{\gamma}\mathbb{Z})=p^{\gamma}$. The
	equality $C\oplus S=\mathbb{Z}/p^{\gamma}\mathbb{Z}$ can be rewritten as
	$$
	\forall x\in \mathbb{Z}/p^{\gamma}\mathbb{Z}, \qquad
	\sum_{y\in \mathbb{Z}/p^{\gamma}\mathbb{Z}}{1}_{C}(y){1}_{S}(x-y)=1.
	$$
	In other words,	${1}_{C}*{1}_{S} =1$, where the convolution is that in group $\Z/ p^\gamma \Z$.
	Then we have
	$$
	\widehat{{1}_{C}}\cdot\widehat{{1}_{S}}=p^{\gamma}\delta_0,
	$$
	where $\delta_0$ is the Dirac measure concentrated at $0$.
	Consequently
	$$
	Z(\widehat{{1}_{C}})\cup Z(\widehat{{1}_{S}})=\mathbb{Z}/p^{\gamma} \mathbb{Z}\setminus \{0\},
	$$
	where $Z(\widehat{f}):=\{x: \widehat{f}(x)=0\}$ is the set of zeros of $\widehat{f}$. In particular,
	the powers $p^\ell$ with $\ell =0, 1, 2, \cdots, \gamma-1$ are zeroes of  either
	$\widehat{{1}_{C}}$ or  $\widehat{{1}_{S}}$. Let
	$$
	C_z = \left\{l\in \{0, 1, 2, \cdots, \gamma-1\}: \widehat{{1}_{C}}(p^\ell) =0\right\},
	$$
	$$
	S_z = \left\{l\in \{0, 1, 2, \cdots, \gamma-1\}: \widehat{{1}_{S}}(p^\ell) =0\right\}.
	$$
	Since $C_z\cup S_z=\{0,1,2,\dots,\gamma-1 \}$, we have $\sharp C_z+\sharp S_z\ge \gamma$.
	On the other hand, we have
	$
	\log_p\sharp C+\log_p\sharp S=\gamma.
	$
	It follows that we have
	$$
	\sharp C_z\ge \log_p\sharp C\quad \mbox{\rm or} \quad \sharp S_z\ge \log_p\sharp S.
	$$
	If $\sharp C_z\ge \log_p\sharp C$, we are done.
	If $\sharp S_z\ge \log_p\sharp S$, the arguments used in the proof  $(4)\Rightarrow(1)$ leads to $\sharp S_z=\log_p\sharp S$. So  we have $\sharp C_z\ge \log_p\sharp C$ .
	
	$(1)\Leftrightarrow (6)$: In Sections \ref{HomogeneitySpectral} and \ref{spectraltohomo},  we have proved the equivalence between (1) and that $\Omega=\bigsqcup_{c\in C}c+ p^\gamma \Zp$ is a spectral set in $\Qp$. By Lemma \ref{spectralmeasure}, we have that (6) is equivalent to that $\delta_C$ is a spectral measure in  $\Qp$. Then what we have to prove is the follwoing equivalence:
	$$
	\Omega ~\text{is a spectral set in} ~~\Qp \Leftrightarrow \delta_C~\text{is a spectral measure in}~~\Qp.
	$$
	Recall that  $\mathbb{L}_\gamma= p^{-\gamma}\mathbb{L}$. It suffices to prove that
$$
(\Omega, \Lambda_{C}+\mathbb{L}_{\gamma}) ~\text{is a spectral pair} \Leftrightarrow
(\delta_{C}, \Lambda_{C}) ~\text{is a spectral pair in}~~\Qp  
$$
where $\Lambda_C\subset B(0,p^\gamma)$ is some finite set, because it is known from Sections \ref{HomogeneitySpectral} and \ref{spectraltohomo}, that $\Omega$ has a spectrum of the form $\Lambda_{C}+\mathbb{L}_{\gamma}$ if it is a spectral set.
	By Lemma \ref{Thm-SpectralMeasure}, $(\delta_{C}, \Lambda_{C})$ is a spectral pair in $\Qp$
	if and only if
	\begin{equation}\label{EQdiscrete1}
	\forall \xi \in \Qp, \quad\sum_{\lambda\in \Lambda_{C}}\left| \frac{1}{\sharp C}\sum_{c\in C} \chi(-c(\lambda-\xi))\right|^2= 1.
	\end{equation}
	Recall that
	$$\widehat{1_{\Omega}}(\lambda-\xi)=p^{\gamma}1_{B(0,p^{\gamma})}(\lambda-\xi)\sum_{c\in C}\chi(-c(\lambda-\xi)).$$
	The equality  (\ref{EQdiscrete1}) is then equivalent to
	\begin{align*}
	\forall \xi \in \Qp, \quad &\sum_{\lambda\in \Lambda_{C}+\mathbb{L}_\gamma}| \widehat{1_{\Omega}}(\lambda-\xi)|^2\\
	&=p^{2\gamma}\sum_{\lambda\in \Lambda_{C}+\mathbb{L}_\gamma}1_{B(0,p^{\gamma})}(\lambda-\xi)|\sum_{c\in C}\chi(-c(\lambda-\xi))|^2\\
	&=p^{2\gamma}\sum_{\lambda\in \Lambda_{C}+\xi^{\prime}}|\sum_{c\in C}\chi(-c(\lambda-\xi))|^2\\
	&=p^{2\gamma}\sum_{\lambda\in \Lambda_{C}}|\sum_{c\in C}\chi(-c\lambda)|^2
	=(\sharp C)^2p^{2\gamma}=|\Omega|^2,
	\end{align*}
	which means, by Lemma \ref{Thm-SpectralMeasure}, that $(\Omega, \Lambda_{C}+\mathbb{L}_\gamma)$
	is a spectral pair.
	
\end{proof}

\section{Uniqueness of spectra and tiling complements}
In this section, we shall investigate the structure of the spectra and tiling complements of a $p$-homogeneous compact set. 
Without loss of generality, we  assume that $\Omega$ is of the form $$\Omega=\bigsqcup_{c\in C} (c+ p^\gamma \mathbb{Z}_p),$$
where $\gamma \ge 1$ is an integer and $C\subset \{0,1,\cdots, p^{\gamma}-1\}$.  We immediately get that 
$$I_{\Omega}\subset \mathbb{N} \text{ and  }  n\in  I_{\Omega} \text { if  }n\geq \gamma.$$
Assume that $\Lambda$  is a spectrum of $\Omega$ and $T$ is a tiling complement of $\Omega$.
Notice that  $\Lambda$ and $T$ are discrete  subsets of $\Q_p$ such
that $$|\lambda-\lambda^{\prime}|_p >1, \  \ \ \ \text{ if $\lambda, \lambda^{\prime}\in  \Lambda$ and $\lambda \neq \lambda^{\prime}$ }$$
and 
$$|\tau-\tau^{\prime}|_p>p^{-\gamma}, \  \ \ \ \text{ if $\tau, \tau^{\prime}\in  T$ and $\tau \neq \tau^{\prime}$ }.$$
Now we are going to characterize of the spectra and tiling components.
\begin{theorem}\label{fulldiscrete}
Let  $\Omega\subset \Q_p$ be a $p$-homogeneous compact open set  with the admissible $p$-order set $I_{\Omega}$.  \\
 \indent {\rm (a)} The set $\Lambda$ is a spectrum of $\Omega$ if and only if it is   $p$-homogeneous  discrete set  with admissible $p$-order set $I_{\Lambda}=-(I_{\Omega}+1)$ .\\
  \indent {\rm (b)} The set $T$ is a  tiling complement  of $\Omega$  if and only if it is a $p$-homogeneous  discrete set with admissible $p$-order set  $I_{T}=\Z\setminus I_{\Omega} $.
\end{theorem}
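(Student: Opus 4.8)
The plan is to treat the spectra (a) and the tiling complements (b) separately, in each case localising the problem to a single ball of radius $p^\gamma$, respectively to a single coset of $\Zp$, and then recovering the global structure by a scale-by-scale count. Write $I=I_\Omega\cap\{0,\dots,\gamma-1\}$ and $J=\{0,\dots,\gamma-1\}\setminus I$, so $C$ is of $\mathcal{T}_{I,J}$-form and $\sharp C=p^{\sharp I}$. For the sufficiency in (a), suppose $\Lambda$ is $p$-homogeneous with $I_\Lambda=-(I_\Omega+1)$. Since $(-\infty,-\gamma-1]\subset I_\Lambda$, homogeneity forces every ball $B(\xi,p^\gamma)$ to contain exactly $p^{\sharp I}=\sharp C$ points of $\Lambda$. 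Expanding the Fourier criterion (\ref{equ}) through Lemma \ref{FourierIntegral}, the diagonal terms already account for $|\Omega|^2$ after the normalisation $p^{-2\gamma}$, so it remains to show $\sum_{\lambda\in\Lambda\cap B(\xi,p^\gamma)}\chi(-(c-c')\lambda)=0$ for all $c\neq c'$ in $C$. I would isolate this as a lemma: if $E$ is a finite $p$-homogeneous set and $v_p(y)=i_0$ with $-i_0-1\in I_E$, then $\sum_{\lambda\in E}\chi(-y\lambda)=0$. To prove it, partition $E$ into balls of radius $p^{i_0+1}$; on each such ball $\chi_{-y}$ is constant on each of its $p$ sub-balls of radius $p^{i_0}$, these sub-balls carry equally many points of $E$ by homogeneity, and any two of their representatives differ at valuation $-i_0-1$, so $y$ times such a difference has valuation $-1$ and the $p$ values of $\chi_{-y}$ run over all $p$-th roots of unity and sum to zero (this is where Lemma \ref{multi} enters). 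Applying the lemma with $y=c-c'$, $i_0=v_p(c-c')\in I$, gives the claim and hence the spectral pairing.

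For the necessity in (a) I would invoke Lemma \ref{number} to obtain $\sharp(\Lambda\cap B(a,p^\gamma))=\sharp C$ for every $a$, together with the orthogonality relation (\ref{OT}). Exactly as in Section \ref{spectraltohomo}, combining (\ref{OT}) with Lemma \ref{multi} and Theorem \ref{keytheorem} shows that $\Lambda$ restricted to each ball $B(a,p^\gamma)$ is $p$-homogeneous with admissible orders $\{-i-1:i\in I\}$. Since moreover every ball of radius $p^\gamma$ is occupied, a ball $B(a,p^{-n})$ with $n\le-\gamma$ splits into $p^{-n-\gamma}$ such balls and therefore carries $p^{-n-\gamma}\sharp C$ points; matching this, together with the within-ball counts, against the target formula $p^{\sharp I_\Lambda^{\ge n}}$ for $I_\Lambda=-(I_\Omega+1)$ verifies global $p$-homogeneity and the asserted order set.

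For (b) I would decompose a tiling complement as $T=\bigsqcup_{\ell\in\mathbb{L}}(\ell+T_\ell)$ with $T_\ell\subset\Zp$. Disjointness of the tiles forces $v_p(\tau-\tau')<\gamma$ for distinct points, so $T$ is injective modulo $p^\gamma\Zp$ and $\Omega\oplus T=\Qp$ is equivalent to having every coset occupied and $\Omega\oplus T_\ell=\Zp$ for each of them; by Corollary \ref{c} this says $T_\ell^{*}:=T_\ell\bmod p^\gamma$ tiles $\Z/p^\gamma\Z$ with complement $C$. For the necessity, $T_\ell^{*}$ is then itself a tile, hence $p$-homogeneous by Theorem \ref{Z/Zp}; computing the zero sets of $\widehat{1_C}$ and $\widehat{1_{T_\ell^{*}}}$ and using that their union contains all powers of $p$ forces these zero sets to partition $\{0,\dots,\gamma-1\}$, pinning the branching set of $T_\ell^{*}$ down to $J$. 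A scale count as in (a) then yields $I_T=\Z\setminus I_\Omega$ and global homogeneity. Conversely, if $T$ is $p$-homogeneous with $I_T=\Z\setminus I_\Omega$, then $(-\infty,-1]\subset I_T$ forces every coset to be occupied with $T_\ell^{*}$ being $p$-homogeneous of branching set $J$, and the complementarity $C\oplus T_\ell^{*}=\Z/p^\gamma\Z$ follows from $\widehat{1_C}(p^\ell u)\,\widehat{1_{T_\ell^{*}}}(p^\ell u)=0$ for every unit $u$ and every $0\le\ell\le\gamma-1$ — a consequence of Lemma \ref{multi} together with $((\gamma-1)-I)\cup((\gamma-1)-J)=\{0,\dots,\gamma-1\}$ — so that assembling the cosets gives $\Omega\oplus T=\Qp$.

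The hard part is the character-sum lemma underlying the sufficiency in (a): the explicit vanishing of Section \ref{HomogeneitySpectral} is computed in the canonical coordinates $\sum_{i\in I}a_ip^{-i-1}$, but a general $p$-homogeneous $\Lambda$ is only isometric to this model and isometries of $\Qp$ need not preserve the characters $\chi_y$, so the cancellation must be re-derived from homogeneity alone. The partition into $p$ sub-balls, combined with the rotation invariance of Lemma \ref{multi}, is exactly what replaces the lost normalisation; the parallel difficulty in (b) is instead bypassed by the Fourier complementarity. The remaining scale counts are routine but must carefully separate the fine behaviour inside a ball of radius $p^\gamma$ (resp. a $\Zp$-coset) from the coarse behaviour across such balls.
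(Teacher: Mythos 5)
Your proposal is correct, and its overall architecture coincides with the paper's: for the necessity directions you localise to a ball $B(a,p^\gamma)$ (resp.\ to a coset of $\Zp$), invoke Lemma \ref{number}, the orthogonality relation, Theorem \ref{keytheorem} and Theorem \ref{Z/Zp} exactly as the paper does, and then recover global homogeneity by the same scale-by-scale count. The genuine difference lies in the two sufficiency directions, which the paper disposes of with ``it is routine to check'' and ``one can check''. There you supply actual arguments, and your diagnosis of why this is not merely a reduction to Section \ref{HomogeneitySpectral} is exactly right: the explicit cancellation of (\ref{cc2}) is computed in the canonical coordinates $\sum_{i\in I}a_ip^{-i-1}$, a general $p$-homogeneous $\Lambda$ is only \emph{isometric} to that model, and isometries of $\Qp$ do not commute with the characters $\chi_y$. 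Your character-sum lemma --- for $E$ finite $p$-homogeneous, $v_p(y)=i_0$ and $-i_0-1\in I_E$ one has $\sum_{\lambda\in E}\chi(-y\lambda)=0$, proved by splitting each occupied ball of radius $p^{i_0+1}$ into its $p$ equally populated sub-balls of radius $p^{i_0}$, on which $\chi_{-y}$ is constant and takes all $p$-th roots of unity as values --- is precisely the homogeneity-only replacement for that computation, and it is correct (one should just record explicitly that $\Lambda\cap B(\xi,p^\gamma)$ inherits $p$-homogeneity with $I=-(I_\Omega^{\le\gamma-1}+1)$, via Lemmas \ref{estimate} and \ref{2.13}, before applying it). Likewise your Fourier complementarity argument for the sufficiency in (b), resting on $Z(\widehat{1_C})$ and $Z(\widehat{1_{T_\ell^*}})$ partitioning $\{0,\dots,\gamma-1\}$ together with the rotation invariance of Lemma \ref{multi}, is a valid and fully written-out substitute for the paper's unproved claim. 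In short: same skeleton, but your version actually proves the two implications the paper leaves to the reader, which is where the real content of the theorem sits.
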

\begin{proof}
Without loss of generality, assume   $\Omega=\bigsqcup_{c\in C} (c+ p^\gamma \mathbb{Z}_p)$,  where $\gamma \ge 1$ is an integer and $C\subset \{0,1,\cdots, p^{\gamma}-1\}$.  For an  integer $n$, let  $$I_{\Omega}^{\leq n}=\{i\in I_{\Omega}, i\leq n\}.$$ 

{\rm (a)} In Section \ref{spectraltohomo}, we have proved that $\Lambda\cap B(0,p^\gamma)$ is a $p$-homogeneous  discrete set with admissible $p$-order set $I_{\Lambda\cap B(0,p^\gamma)}=-(I_{\Omega}^{\leq \gamma-1} +1)$. Note that,  any integer $n\ge \gamma$, the set $\Omega$ can be written as $$\Omega=\bigsqcup_{c\in C_n} (c+ p^n \mathbb{Z}_p),$$ where
$C_n\subset \{0,1,\cdots, p^{n}-1\}$.  The same argument implies that  the finite set  $\Lambda\cap B(0,p^n)$   is  $p$-homogeneous with  $I_{\Lambda\cap B(0,p^n)}=-(I_{\Omega}^{\leq n-1} +1)$.
By Lemma \ref{2.13} and the definition of $p$-homogeneity, $\Lambda$ is a $p$-homogeneous  discrete set with the admissible $p$-order set  $I_{\Lambda}=-(I_{\Omega}+1)$.

In fact,  it is routine  to  check that the equation (\ref{spectral criterion}) holds for any  $p$-homogeneous discrete set  $\Lambda$ with $I_{\Lambda}=-(I_{\Omega}+1)$.
 So  the $p$-homogeneity  of $\Lambda$ and  the equality  $I_{\Lambda}=-(I_{\Omega}+1)$  is sufficient for  $\Lambda$ being  a spectrum of $\Omega$.
 
\medskip
{\rm (b)} By  Corollary \ref{c} and Theorem \ref{Z/Zp}, $T\cap \Z_p$ is a $p$-homogeneous  discrete set with admissible $p$-order set  $$I_{T\cap \Z_p} = \{0, \dots, \gamma-1 \} \setminus I_{\Omega}^{\leq \gamma-1}. $$

Similarly, for any $a\in \Qp$,  $T\cap (a+\Z_p)$ is a $p$-homogeneous  discrete set with  $I_{T\cap ( a+\Z_p)}=I_{T\cap \Z_p} $. Since two balls of same size are either identical or disjoint, $T\cap (p^{-1}\Z_p)$ is  a $p$-homogeneous  discrete set with  $I_{T\cap ( p^{-1}\Z_p)}=I_{T\cap \Z_p}\cup\{-1\}$. 

An argument by induction shows that $T\cap (p^{-n}\Z_p)$ is $p$-homogeneous with $$I_{T\cap ( p^{-n}\Z_p)}=I_{T\cap \Z_p}\cup\{-1,-2,\cdots, -n \}.$$
As in (a),  we get that $T$ is a $p$-homogeneous  discrete set with admissible $p$-order set $I_{T}=\Z\setminus I_{\Omega} $.

On the other hand, one can check that any $p$-homogenous discrete set $T$ with $I_{T}=\Z\setminus I_{\Omega}$  is a  tiling complement of $\Omega$.
\end{proof}
%
%

\begin{proof}[Proof of Theorem \ref{spetrumandtranslate}]
	In the proof of Theorem \ref{main}, we have constructed a spectrum $ \Lambda=\sum_{i\in I_\Omega} \Z/ p\Z \cdot p^{-i-1}$ for $\Omega$ and a tiling complement $ T=\sum_{i\notin I_\Omega} \Z/ p\Z \cdot p^{i} $ for $\Omega$. Therefore, this theorem is an immediate consequence of  Theorem \ref{fulldiscrete}, Lemma \ref{iso2} and Proposition \ref{iso3}.
\end{proof}

  Let us finish this section by geometrically presenting  the canonical  spectrum and  the canonical  tiling complement of a  compact open spectral set.
Assume  that    $\Omega=\bigsqcup_{c\in C} (c+ p^\gamma \mathbb{Z}_p)$  with $\gamma \ge 1$ is an integer and $C\subset \{0,1,\cdots, p^{\gamma}-1\}$. Notice that $n\in I_{\Omega}$ if $n\geq\gamma$.
 Set $\Lambda_{\gamma}=\Lambda \cap B(0,p^{\gamma})$ and $T_{0}=T \cap B(0,1)$.  Then $\Lambda= \Lambda_{\gamma}\oplus \mathbb{L}_{\gamma}$
 and  $T= T_{0}\oplus \mathbb{L}$. The sets $\Lambda_{\gamma}$ and $T_0$ are $p$-homogeneous.  If we consider $p^{\gamma}\Lambda_{\gamma}$ and $T_0$ as subsets of $\Z/p^{\gamma}\Z$, they will determine two subtrees of $\mathcal{T}^{(\gamma)}$.
 The following example show the  relations among 
 $\Omega$, $ \Lambda_{\gamma}$ and $T_0$.

 \begin{figure}[H]
  \centering
  \includegraphics[width=1\textwidth]{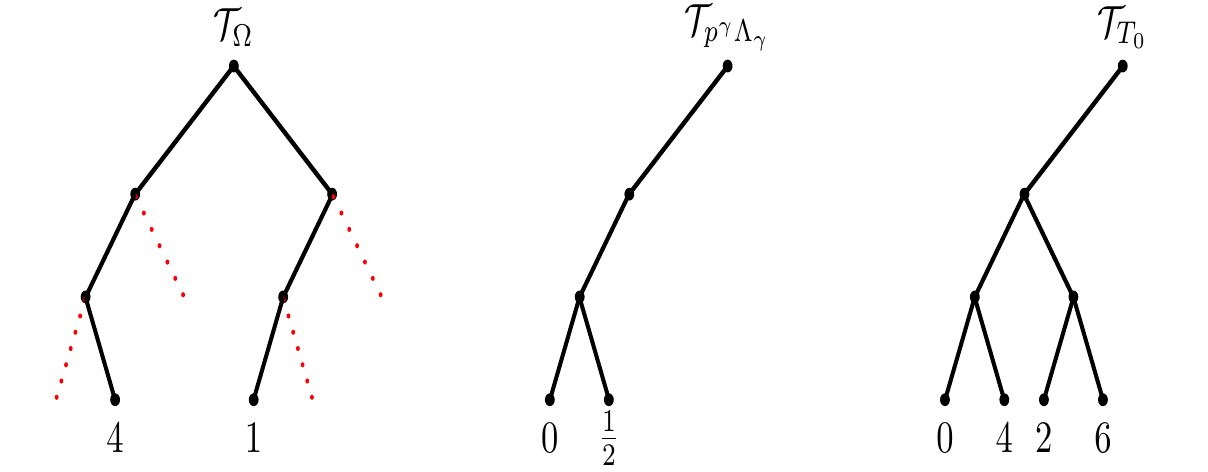}\\
  \caption{The left  is the  tree determined by $\Omega=(1+8\Z_2)\cup (4+8\Z_2)$; the middle is the tree determined by $\Lambda_{\gamma}=\{0,1/2\}$;
the right is the tree determined by  $T_{0}=\{0,2,4,6\}$.}
\label{translate}
\end{figure}

 \section{Finite spectral sets in $\mathbb{Q}_p$}

The following theorem  characterizes the  uniform probability measures supported  on  some   finite   sets  $C\subset  \mathbb{Q}_p$ and it  gives more information than Theorem \ref{discrete}.  As we shall see, 
the measure $\delta_{C}$ is a spectral measure if and only if  
 $C$ is represented by an infinite $p$-homegeneous tree for which,
from some level on, each parent has only one son.
Recall that 
$$  \gamma_C =  \max_{\substack {c,c^{\prime}\in C\\c\neq c^{\prime}} }v_p(c- c^{\prime}).$$

%

\begin{theorem}
	The following  are equivalent.
	\begin{itemize}
		\item[{\rm (1)}] The measure $\delta_{C} $ is a spectral measure.
		\item[{\rm (2)}] For each  integer $\gamma>\gamma_C$,   $\Omega_{\gamma}:=\bigsqcup_{c\in C} B(c,p^{-\gamma})$ is a spectral set.
		\item[{\rm (3)}] For some integer $\gamma_0>\gamma_C$,   $\Omega_{\gamma_0}:=\bigsqcup_{c\in C} B(c,p^{-\gamma_0})$ is a spectral set.
		\item[{\rm (4)}] For any  integer $\gamma\in \mathbb{Z}$,   $\Omega_{\gamma}:=\bigcup_{c\in C} B(c,p^{-\gamma})$ is a spectral set.
	\end{itemize}
\end{theorem}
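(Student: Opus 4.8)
The plan is to establish the cycle of implications $(1)\Rightarrow(4)\Rightarrow(2)\Rightarrow(3)\Rightarrow(1)$, two of whose links are essentially free. The implication $(4)\Rightarrow(2)$ holds because for $\gamma>\gamma_C$ the minimal distance $p^{-\gamma_C}$ between distinct points of $C$ exceeds the radius $p^{-\gamma}$, so the balls $B(c,p^{-\gamma})$ are pairwise disjoint and the union in (4) coincides with the disjoint union in (2); and $(2)\Rightarrow(3)$ is the trivial specialization to, say, $\gamma_0=\gamma_C+1$. After normalizing by a translation and a dilation, which preserve the spectrality of $\delta_C$ and of each $\Omega_\gamma$ as well as the value $\gamma_C$, I may assume $0\in C\subset\Zp$. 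The one computation underlying everything is the Fourier identity furnished by Lemma \ref{FourierIntegral}: for $\gamma>\gamma_C$,
\[
\widehat{1_{\Omega_\gamma}}(\xi)=p^{-\gamma}\,1_{B(0,p^\gamma)}(\xi)\sum_{c\in C}\chi(-c\xi)=p^{-\gamma}(\sharp C)\,1_{B(0,p^\gamma)}(\xi)\,\widehat{\delta_C}(\xi),
\]
which says that $\widehat{1_{\Omega_\gamma}}$ is, up to a scalar, the restriction of $\widehat{\delta_C}$ to the ball $B(0,p^\gamma)$.

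For $(1)\Rightarrow(4)$ I would route through $p$-homogeneity. Let $\Lambda$ be a spectrum of $\delta_C$; since $\dim L^2(\delta_C)=\sharp C$, the set $\Lambda$ is finite with $\sharp\Lambda=\sharp C$, and after a translation I may take $0\in\Lambda$. Orthogonality of $\{\chi_\lambda\}$ in $L^2(\delta_C)$ reads $\sum_{c\in C}\chi(-c(\lambda-\lambda'))=0$ for distinct $\lambda,\lambda'$, and by the rotation invariance of Lemma \ref{multi} these vanishing sums depend only on the valuations $v_p(\lambda-\lambda')$. Writing $n$ for the number of distinct such valuations, Lemma \ref{estimate} gives $\sharp C=\sharp\Lambda\le p^{n}$, so after reducing $C$ modulo a large power of $p$ the hypotheses of Theorem \ref{keytheorem} are met and $C$ is a $p$-homogeneous discrete set; this is exactly the argument of Section \ref{spectraltohomo} with $\Omega_\gamma$ replaced by $\delta_C$ and the factor $\mathbb{L}_\gamma$ dropped. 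Once $C$ is $p$-homogeneous, a level-by-level count on its tree shows that for every $\gamma$ each ball $B(a,p^{-m})$ with $m<\gamma$ meeting $\Omega_\gamma$ has exactly $p$ nonempty children if $m\in I_C$ and exactly one otherwise, so the finite tree of $\Omega_\gamma$ is $p$-homogeneous; Theorem \ref{main} then makes $\Omega_\gamma$ spectral. This covers all $\gamma\in\Z$, including the merged-ball range $\gamma\le\gamma_C$, and yields (4).

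For $(3)\Rightarrow(1)$ I would run the correspondence in the opposite direction. By Theorem \ref{main}, $\Omega_{\gamma_0}$ is $p$-homogeneous, and the construction of Section \ref{HomogeneitySpectral} produces a spectrum of the special form $\Lambda_C+\mathbb{L}_{\gamma_0}$ with $\Lambda_C=\sum_{i\in I}\Z/p\Z\cdot p^{-i-1}\subset B(0,p^{\gamma_0})$. The key step is the bridge
\[
(\Omega_{\gamma_0},\,\Lambda_C+\mathbb{L}_{\gamma_0})\ \text{is a spectral pair}\iff(\delta_C,\,\Lambda_C)\ \text{is a spectral pair},
\]
valid for any finite $\Lambda_C\subset B(0,p^{\gamma_0})$. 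This is proved by inserting the Fourier identity above into the criterion \eqref{equ}--\eqref{spectral criterion} of Lemma \ref{Thm-SpectralMeasure}: for fixed $\xi$ the constraint $\mu-\xi\in B(0,p^{\gamma_0})$ selects, for each $\lambda\in\Lambda_C$, exactly one representative $\mu=\lambda+\ell$ with $\ell\in\mathbb{L}_{\gamma_0}$ (since $\mathbb{L}_{\gamma_0}$ is a transversal of the $B(0,p^{\gamma_0})$-cosets), and on that ball $\widehat{\delta_C}$ may be shifted back so that the sum over $\Lambda_C+\mathbb{L}_{\gamma_0}$ collapses to the sum over $\Lambda_C$. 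This is precisely the computation carried out at the end of the proof of Theorem \ref{Z/Zp}, and it immediately gives that $\delta_C$ is spectral with spectrum $\Lambda_C$.

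I expect the main obstacle to be this bridge lemma, and specifically the bookkeeping that makes the passage between the infinite spectrum $\Lambda_C+\mathbb{L}_{\gamma_0}$ of $\Omega_{\gamma_0}$ and the finite spectrum $\Lambda_C$ of $\delta_C$ rigorous: one must verify that $\widehat{\delta_C}$ is invariant under the shifts $\xi\mapsto\xi+\ell$ with $\ell\in\mathbb{L}_{\gamma_0}$ on the relevant ball, and that the truncation of $\widehat{\delta_C}$ to $B(0,p^{\gamma_0})$ loses no spectral information. Containment of $\Lambda_C$ in the ball is not an issue on this route, since the explicit homogeneous form coming from Section \ref{HomogeneitySpectral} already places $\Lambda_C$ inside $B(0,p^{\gamma_C+1})\subset B(0,p^{\gamma_0})$, making $\Lambda_C+\mathbb{L}_{\gamma_0}$ a genuine direct sum; the remaining manipulations are routine and parallel Theorem \ref{Z/Zp}. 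The only point genuinely beyond the compact-open theory is the merged-ball regime $\gamma\le\gamma_C$ in (4), which, as indicated, reduces to the uniform branching of the $p$-homogeneous tree of $C$.
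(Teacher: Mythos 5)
Your proposal is correct, but it is organized around a different cycle and replaces the paper's hardest computation with a structural argument. The paper proves $(1)\Rightarrow(2)\Rightarrow(3)\Rightarrow(4)\Rightarrow(1)$: its $(1)\Rightarrow(2)$ first localizes any spectrum of $\delta_C$ inside $B(0,p^{\gamma_C+1})$ via Lemma \ref{root} and then verifies the Parseval criterion \eqref{equ} for $(\Omega_\gamma,\Lambda_C+\mathbb{L}_\gamma)$ by hand; its $(3)\Rightarrow(4)$ handles the merged-ball range $1\le\gamma\le\gamma_0$ through $p$-homogeneity of $C_{\!\!\mod p^\gamma}$; and its $(4)\Rightarrow(1)$ verifies \eqref{spectral criterion} pointwise by choosing, for each $\xi$, a radius $\gamma$ with $\xi\in B(0,p^\gamma)$ and collapsing the spectrum $\Lambda_C+\mathbb{L}_\gamma$ onto $\Lambda_C$. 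You instead prove $(1)\Rightarrow(4)$ by extracting $p$-homogeneity of $C$ itself from the orthogonality relations (via Lemmas \ref{multi} and \ref{estimate} and Theorem \ref{keytheorem}, i.e.\ the Section \ref{spectraltohomo} argument transplanted to $\delta_C$) and then invoking Theorem \ref{main} for every $\gamma$ simultaneously — which also absorbs the merged-ball regime in one stroke — and you prove $(3)\Rightarrow(1)$ by citing the bridge equivalence between $(\Omega_{\gamma_0},\Lambda_C+\mathbb{L}_{\gamma_0})$ and $(\delta_C,\Lambda_C)$ already established inside the proof of Theorem \ref{Z/Zp}, with a single fixed $\gamma_0$ rather than a $\xi$-dependent radius. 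Your route recycles more of the existing structural theory and is arguably cleaner; the paper's route is more self-contained here and makes explicit where the spectrum of $\delta_C$ lives, which feeds into the uniqueness statements. The one point you flag but do not fully discharge — that restricting the criterion \eqref{spectral criterion} for $\delta_C$ to $\xi\in B(0,p^{\gamma_0})$ loses nothing — does need a word: it follows because orthogonality only involves differences $\lambda-\lambda'\in B(0,p^{\gamma_0})$ and completeness is automatic from the dimension count $\sharp\Lambda_C=\sharp C=\dim L^2(\delta_C)$ (equivalently, the characters $\chi_\xi|_C$ with $\xi\in B(0,p^{\gamma_0})$ already span $L^2(\delta_C)$ since $\gamma_0>\gamma_C$ makes reduction mod $p^{\gamma_0}$ injective on $C$); this is consistent with, and implicit in, the computation you cite.
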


\begin{proof}
Without loss of generality, we assume that $C\subset \mathbb{Z}_p$, so that $\gamma_C\ge 0$.
Recall that for any integer $\gamma \in \mathbb{Z}$, $\mathbb{L}_{\gamma}$ denotes a complete set of representatives of the cosets of the subgroup $B(0,p^{
	\gamma})$ of $\mathbb{Q}_p$.

$(1)\Rightarrow(2)$: Fix $\gamma >\gamma_{C}$. Assume  that $\Lambda_{C}$ is a  spectrum of $\delta_{C}$, which means by Lemma 2.2 that
 \begin{align}\label{EQdiscrete11}
\forall \xi \in \mathbb{Q}_p, \quad\sum_{\lambda\in \Lambda_{C}}\left| \sum_{c\in C} \chi(-c(\lambda-\xi))\right|^2= (\sharp C)^2.
\end{align}

  Observe that we can assume $\Lambda_{C}\subset B(0,p^{\gamma})$.  We  assume $0 \in \Lambda_C$. 
  Let $\lambda$ be an
arbitrary point in $\Lambda_C,$ different from $0$. The orthogonality of $\chi_0$ and $\chi_\lambda$ is nothing but
       $$ \sum_{c\in C} \chi( \lambda c) = 0.$$
  Apply Lemma \ref{root} , we get that $|\lambda|_{p}\leq p^{\gamma_{C}+1}$.
        We conlcude that 
    $  \Lambda_C \subset B(0, p^{\gamma_C +1})$
                         $\subset B(0, p^\gamma)$
for all $\gamma > \gamma_C. $
   
   Now we check that $(\Omega_{\gamma}, \Lambda_{C}+\mathbb{L}_{\gamma})$ is a spectral pair.
Recall that
\begin{equation}\label{rx24}
\forall \zeta\in \mathbb{Q}_p, \quad
\widehat{1_{\Omega_\gamma}}(\zeta)=p^{-\gamma}1_{B(0,p^{\gamma})}(\zeta)\sum_{c\in C}\chi(-c\zeta).
\end{equation}
Fix $\xi\in \mathbb{Q}_p$. By (\ref{rx24}), we have
$$
\sum_{\lambda\in \Lambda_{C}+\mathbb{L}_\gamma}|\widehat{1_{{\Omega}_\gamma}}(\lambda-\xi)|^2
=p^{-2\gamma}\sum_{\lambda\in \Lambda_{C}+\mathbb{L}_\gamma} 1_{B(0,p^{\gamma})}(\lambda-\xi)\left|\sum_{c\in C}\chi(-c(\lambda-\xi))\right|^2
$$
Since $\Lambda_C \subset B(0,p^{\gamma})$, we have $B(\xi,p^{\gamma})\cap (\Lambda_C + \mathbb{L}_\gamma) = \Lambda_C+ \ell_\xi$
where $\ell_\xi$ is the unique point contained in $B(\xi,p^{\gamma})\cap \mathbb{L}_\gamma$.
Thus
\begin{align*}
\sum_{\lambda\in \Lambda_{C}+\mathbb{L}_\gamma}|\widehat{1_{{\Omega}_\gamma}}(\lambda-\xi)|^2
&=p^{-2\gamma}\sum_{\lambda\in \Lambda_{C}+\ell_\xi} \left| \sum_{c\in C}\chi(-c(\lambda-\xi))\right|^2\\
&=p^{-2\gamma}(\sharp C)^2=|\Omega_\gamma|^2
\end{align*}
where the second equality is a consequence of the criterion(\ref{EQdiscrete11}) and of the fact that $(\delta_C, \Lambda_{C}+\ell_\xi)$
is also a spectral pair.
This means, by Lemma 2.2, that $(\Omega_\gamma, \Lambda_{C}+\mathbb{L}_\gamma)$ is a spectral pair.	

$(2)\Rightarrow(3)$: Obviously.

$(3)\Rightarrow(4)$: Without loss of generality, we assume that $C\subset \mathbb{Z}_p$.
If $\gamma \le 0$, $\Omega_\gamma$ is equal to $p^{-\gamma}\mathbb{Z}_p$ which is spectral.
 If $1\le \gamma \le \gamma_0$, $\Omega_\gamma$ is spectral directly by the hypothesis and Theorem 1.1. Observe that $\sharp (C_{\!\!\!\! \mod p^\gamma})
 = \sharp C$  for $\gamma >\gamma_C$. Therefore, if  $\gamma >\gamma_0 >\gamma_C$,
 $C_{\!\!\!\! \mod p^\gamma}$ is $p$-homogeneous, so that $\Omega_\gamma$ is a spectral set.

$(4)\Rightarrow(1)$:
For any $\xi \in \mathbb{Q}_p$, there exists an integer $\gamma > \gamma_C$ such that $\xi\in B(0, p^{\gamma})$. Fix this $\gamma$ depending on $\xi$.  By the hypothesis,  $\Omega_\gamma$ is a spectral set.
Assume that $\Lambda_\gamma$ is a spectrum of $\Omega_\gamma$. That is to say
$$
\forall \zeta \in \mathbb{Q}_p,\quad
\sum_{\lambda\in \Lambda_{\gamma}}| \widehat{1_{\Omega_\gamma}}(\lambda-\zeta)|^2=|\Omega_\gamma|^2.
$$
We can assume that $\Lambda_\gamma$ has the form $\Lambda_C+\mathbb{L}_\gamma$ (see Theorem \ref{spetrumandtranslate}), where $\Lambda_C\subset B(0,p^{\gamma_C+1})$. 
By (\ref{rx24}), we have
\begin{align*}
|\Omega_\gamma|^2
&=\sum_{\lambda\in \Lambda_{\gamma}}| \widehat{1_{\Omega_\gamma}}(\lambda-\xi)|^2\\
&=p^{-2\gamma}\sum_{\lambda\in \Lambda_C+\mathbb{L}_\gamma} 1_{B(0,p^{-\gamma})}(\lambda-\xi)\left|\sum_{c\in C}\chi(-c(\lambda-\xi))\right|^2\\
&=p^{-2\gamma}\sum_{\lambda\in \Lambda_C}\left|\sum_{c\in C}\chi(-c(\lambda-\xi))\right|^2.
\end{align*}
Since $|\Omega_\gamma|^2=(\sharp C)^2p^{-2\gamma}$, we get 
$$
\forall \xi \in \Q_p, \quad (\sharp C)^2=\sum_{\lambda\in \Lambda_C}|\sum_{c\in C}\chi(-c(\lambda-\xi))|^2.
$$
This means that the measure $\delta_{C} $ is a spectral measure by Lemma 2.2.
\end{proof}

\section{Singular Spectral Measures }
In this section, we shall construct  a class of singular spectral measures. Let $I, J$ be two disjoint  infinite subsets of $\N$ such that
$$I\bigsqcup J=\N.$$
For any non-negative integer $\gamma$, let $I_{\gamma}= I \cap  \{0,1, \cdots \gamma-1 \}$ and $J_{\gamma}=J \cap  \{0,1, \cdots \gamma-1 \}$. Let $C_{I_\gamma,J_{\gamma}}\subset \Z/p^{\gamma}\Z$ be $p$-homogeneous  subsets corresponding to a $\mathcal{T}_{I_\gamma,J_{\gamma}}$ form tree  as described in Section \ref{Finitehomogeneoustree}. 
Considering $C_{I_\gamma,J_{\gamma}}$ as a subset of $\Z_p$, let
$$\Omega_{\gamma}=\bigsqcup_{c \in C_{I_\gamma, J_\gamma}} \left(c + p^\gamma \mathbb{Z}_p\right), \quad \gamma=0,1,2\cdots $$
be a nested sequence of compact open sets, i.e.  $\Omega_{0}\supset \Omega_{1}\supset \Omega_{2}\supset \cdots$.
It is obvious that the measures ${\frac{1}{|\Omega_{\gamma}|}\mathfrak{m}|_{\Omega_{\gamma}}}$ weakly converge to a singular measure $\mu_{I,J}$ as $\gamma \to \infty$. The measure $\mu_{I,J}$ is supported on a $p$-homogeneous, Cantor-like set of measure $0$, and the measure of an open ball with respect to  $\mu_{I,J}$  is just the ``proportion" of this support inside the ball.
All this could be well defined; the proofs are simple exercises of measure theory, in particular, applications of the Portmanteau Theorem.
We should remark that $\mu_{I,J}$ depends not only on $I$ and $J $ but also on the choice of  $C_{I_\gamma,J_\gamma}$.
Actually, the choice  of $C_{I_\gamma,J_\gamma}$ implies that  the average Dirac measures $\delta_{C_{I_\gamma,J_\gamma}}$ also converge to $\mu_{I,J}$ as $\gamma\to \infty$.

\begin{theorem}Under the above assumption, $\mu_{I,J}$ is a spectral measure with  the  following set
$$\Lambda=\left\{\sum _{i\in I}b_i p^{-i-1} :   b_{i}\in \{ 0, 1, \cdots, p-1\} \right\}$$
as a spectrum.
\end{theorem}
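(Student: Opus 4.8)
The plan is to verify the Jorgensen--Pedersen-type criterion of Lemma \ref{Thm-SpectralMeasure}, namely that $\sum_{\lambda \in \Lambda} |\widehat{\mu_{I,J}}(\lambda - \xi)|^2 = 1$ for every $\xi \in \Qp$. The idea is to reduce this infinite identity to the finite-level spectral pairs already furnished by Sections 3--6 and then let $\gamma \to \infty$. Writing $\Lambda_\gamma := \Lambda \cap B(0,p^\gamma)$, one checks directly from the definition of $\Lambda$ (the valuations $-i-1$ being distinct) that $\Lambda_\gamma = \sum_{i \in I_\gamma} \mathbb{Z}/p\mathbb{Z}\cdot p^{-i-1}$ and that $\Lambda = \bigcup_\gamma \Lambda_\gamma$ is an increasing union of finite sets. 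By the construction in Section \ref{HomogeneitySpectral} the set $\Omega_\gamma$ is spectral with spectrum $\Lambda_\gamma + \mathbb{L}_\gamma$, and hence by the equivalence $(1)\Leftrightarrow(6)$ of Theorem \ref{Z/Zp} the finite measure $\delta_{C_{I_\gamma, J_\gamma}}$ is spectral with spectrum exactly $\Lambda_\gamma$.

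The key step is to compare $\widehat{\mu_{I,J}}$ with $\widehat{\delta_{C_{I_\gamma, J_\gamma}}}$ on the ball $B(0,p^\gamma)$. By construction the image of $\mu_{I,J}$ under the projection $\Qp \to \mathbb{Z}/p^\gamma\mathbb{Z}$ is the uniform measure on $C_{I_\gamma, J_\gamma}$: indeed, by $p$-homogeneity each surviving ball $c + p^\gamma \Zp$ with $c \in C_{I_\gamma, J_\gamma}$ receives equal mass $1/\sharp C_{I_\gamma, J_\gamma}$, which is exactly the mass $\delta_{C_{I_\gamma, J_\gamma}}$ assigns to it. Since a character $\chi_\eta$ with $\eta \in B(0,p^\gamma)$ is constant on cosets of $p^\gamma \Zp$, the value $\widehat{\nu}(\eta)$ of any probability measure $\nu$ depends only on its projection modulo $p^\gamma$; therefore
\[
\widehat{\mu_{I,J}}(\eta) = \widehat{\delta_{C_{I_\gamma, J_\gamma}}}(\eta) \qquad \text{for all } \eta \in B(0,p^\gamma).
\]

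With these two facts in hand the verification is immediate. Fix $\xi$ and choose $\gamma_0$ with $\xi \in B(0,p^{\gamma_0})$. For every $\gamma \ge \gamma_0$ and every $\lambda \in \Lambda_\gamma$ we have $\lambda - \xi \in B(0,p^\gamma)$ (an additive subgroup containing both $\lambda$ and $\xi$), so the displayed identity together with the finite spectral pair $(\delta_{C_{I_\gamma, J_\gamma}}, \Lambda_\gamma)$ yields
\[
\sum_{\lambda \in \Lambda_\gamma} |\widehat{\mu_{I,J}}(\lambda - \xi)|^2 = \sum_{\lambda \in \Lambda_\gamma} |\widehat{\delta_{C_{I_\gamma, J_\gamma}}}(\lambda - \xi)|^2 = 1 .
\]
Letting $\gamma \to \infty$ and invoking monotone convergence for the nonnegative terms summed over the increasing finite sets $\Lambda_\gamma \uparrow \Lambda$, we obtain $\sum_{\lambda \in \Lambda}|\widehat{\mu_{I,J}}(\lambda - \xi)|^2 = 1$, which is the desired criterion.

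The only point requiring genuine care --- and the main obstacle --- is the projection identity for $\mu_{I,J}$. One must confirm, from the weak-convergence definition of $\mu_{I,J}$ (equivalently, from $\frac{1}{|\Omega_\gamma|}\mathfrak{m}|_{\Omega_\gamma} \to \mu_{I,J}$) together with continuity of the locally constant characters, that the finite-dimensional distributions of $\mu_{I,J}$ are exactly uniform on the $C_{I_\gamma, J_\gamma}$, and in particular that this holds independently of the choices made for the $J$-digits. Once this is secured, everything else is bookkeeping with nested balls and the already-established finite theory.
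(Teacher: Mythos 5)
Your proposal is correct, and it takes a noticeably different route from the paper's. The paper works with the densities $\frac{1}{|\Omega_\gamma|}1_{\Omega_\gamma}$: it first gets the upper bound $\sum_{\lambda\in\Lambda}|\widehat{\mu_{I,J}}(\lambda-\xi)|^2\le 1$ from the pointwise convergence $\widehat{\mu_{I,J}}=\lim_\gamma\frac{1}{|\Omega_\gamma|}\widehat{1_{\Omega_\gamma}}$ together with Fatou's lemma, and then proves the matching lower bound by an explicit computation: for $\xi\in B(0,p^{\gamma_0})$ it counts the pairs $(c,c')$ in $C_{I_\gamma,J_\gamma}\times C_{I_\gamma,J_\gamma}$ with $|c-c'|_p\le p^{-\gamma_0}$ to show $\sum_{\lambda\in\Lambda_{\gamma_0}}|\widehat{1_{\Omega_\gamma}}(\lambda-\xi)|^2=|\Omega_\gamma|^2$ for all $\gamma\ge\gamma_0$, and lets $\gamma\to\infty$ before sending $\gamma_0\to\infty$. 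You instead exploit the exact finite-level identity $\widehat{\mu_{I,J}}(\eta)=\widehat{\delta_{C_{I_\gamma,J_\gamma}}}(\eta)$ on $B(0,p^\gamma)$, which follows from local constancy of $\chi_\eta$ and the fact (forced by $p$-homogeneity and the nesting of the $\Omega_\gamma$) that $\mu_{I,J}$ projects mod $p^\gamma$ to the uniform measure on $C_{I_\gamma,J_\gamma}$; combined with the finite spectral pairs $(\delta_{C_{I_\gamma,J_\gamma}},\Lambda_\gamma)$ already available from Sections 3--4, the partial sums over $\Lambda_\gamma$ are \emph{exactly} $1$ for all large $\gamma$, and the monotone limit finishes the argument. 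What your approach buys is the elimination of both Fatou's lemma and the pair-counting computation, replacing a two-sided squeeze by a single exact identity; the cost is that you must justify the projection identity carefully (equal mass on each surviving ball via Portmanteau applied to clopen sets), which you correctly flag as the one point requiring care and which the paper also treats as a routine measure-theoretic exercise. Both arguments ultimately rest on the same finite-level spectral pairs, so neither is more general, but yours is arguably the cleaner write-up.
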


\begin{proof}
 What we have to prove is  the equality (\ref{spectral criterion})  for  the pair $(\mu_{I,J}, \Lambda)$.
Since $\mu_{I,J}$ is the weak limit of ${\frac{1}{|\Omega_{\gamma}|}\mathfrak{m}|_{\Omega_{\gamma}}}$ as $\gamma \to \infty$,  we have
$$\forall \xi\in \Q_p, \quad \widehat{\mu_{I,J}}(\xi)=\lim_{\gamma\to \infty} \frac{1}{|\Omega_{\gamma}|} \cdot \widehat{1_{\Omega_{\gamma}}}(\xi). $$
For each integer $\gamma\geq 0$, let  $\Lambda_{\gamma}=\Lambda\cap B(0,p^\gamma)$.
Note that $\Omega_{\gamma}$  is a spectral set with spectrum $\Lambda_{\gamma}+ \mathbb{L}_{\gamma}$ by Theorem  \ref{spetrumandtranslate}.
Then  (\ref{equ}) gives the following equality
 $$ \forall \xi \in \Q_p, \quad \sum_{\lambda\in \Lambda}| \widehat{1_{\Omega_{\gamma}}}(\lambda -\xi)|^2=\sum_{\lambda\in \Lambda_\gamma}| \widehat{1_{\Omega_{\gamma}}} (\lambda -\xi)|^2 = |\Omega_{\gamma}|^2.$$
 By Fatou's Lemma, we get that
 \begin{align}\label{ineq}
\sum_{\lambda\in \Lambda}|\widehat{\mu_{I,J}} (\lambda -\xi)|^2\leq \lim_{\gamma\to \infty}\frac{1}{|\Omega_{\gamma}|^2}\sum_{\lambda\in \Lambda}|\widehat{1_{\Omega_{\gamma}}} (\lambda -\xi)|^2=1
\end{align}
for all $\xi\in \Q_p$.
 Now, for any positive integer $\gamma_0$, we shall show that
$$ \sum_{\lambda\in \Lambda}| \widehat{1_{\Omega_{\gamma}}}(\lambda -\xi)|^2 \geq 1_{B(0,p^{\gamma_0})}(\xi), \hbox{ for all integers $\gamma\geq \gamma_0$}.$$
Recall that $$|\widehat{1_{\Omega_{\gamma}}}(\lambda-\xi)|^2=p^{-2\gamma} 1_{B(0,p^\gamma)}(\lambda-\xi)\sum_{c,c^{\prime}\in C_{I,J}}\chi((c-c^{\prime})(\lambda-\xi)).$$
For any $\xi \in B(0,p^{\gamma_0}) $,  observe that $$ \forall \lambda\in  \Lambda_{\gamma_0} , \quad \chi((c-c^{\prime})(\lambda-\xi))=1\hbox{ if } |c-c^{\prime}|_p\leq p^{-\gamma_0}$$ and
 $$\sum_{\lambda\in \Lambda_{\gamma_0}}\chi((c-c^{\prime})(\lambda-\xi))=0  \hbox{ if } |c-c^{\prime}|_p> p^{-\gamma_0}.$$
 For integer $\gamma \geq \gamma_0$, by calculation, there are  $p^{2\sharp(I_\gamma\setminus I_{\gamma_0})}p^{\sharp{I_{\gamma_0}}}$  pairs $(c,c^{\prime})\in C_{I_\gamma,J_\gamma}\times C_{I_{\gamma},J_{\gamma}}$ with $|c-c^{\prime}|_p\leq p^{-\gamma_0}$.
So we get that
$$\sum_{\lambda\in \Lambda_{\gamma_0}}|\widehat{1_{\Omega_{\gamma}}} (\lambda -\xi)|^2=p^{-2(\gamma-\sharp I_\gamma)} =|\Omega_{\gamma}|^2  \quad \forall \xi\in B(0,p^{\gamma_0}).$$
Thus, we have
$$\lim_{\gamma\to \infty}\frac{1}{|\Omega_{\gamma}|^2}\sum_{\lambda\in \Lambda_{\gamma_0}}|\widehat{1_{\Omega_{\gamma}}} (\lambda -\xi)|^2=1, \quad  \forall  \xi\in B(0,p^{\gamma_0}),$$
or in other words,
$$\sum_{\lambda\in \Lambda_0}|\widehat{\mu_{I,J}} (\lambda -\xi)|^2 =1,  \quad  \forall  \xi\in B(0,p^{\gamma_0}).$$
Note that  $$1\geq \sum_{\lambda\in \Lambda}|\widehat{\mu_{I,J}} (\lambda -\xi)|^2 \geq \sum_{\lambda\in \Lambda_0}|\widehat{\mu_{I,J}} (\lambda -\xi)|^2$$ 
by  (\ref{ineq}).
Since $\gamma_0$ could  arbitrarily large,  
we have
$$\forall \xi \in \Qp, \quad \sum_{\lambda\in \Lambda}|\widehat{\mu_{I,J}} (\lambda -\xi)|^2=1. $$
\end{proof}

Assume $I_\gamma$ and $J_\gamma$ form a partition of $\{0, 1, \cdots, \gamma -1\}$
($\gamma \ge 1$) such that $C_{I_\gamma, J_\gamma}$ is a $p$-homogeneous tree. Then let
$$
      I = \bigcup_{n=0}^\infty (n\gamma + I_\gamma), \qquad J = \bigcup_{n=0}^\infty (n\gamma + J_\gamma).
$$
The measure constructed above in this special case is a self-similar measure generated by the following iterated function system:
$$
    f_c(x) = p^\gamma x + c     \qquad (c \in C: =C_{I\gamma, J_\gamma}).
$$

Let us consider two concrete examples.
\medskip

{\em Example 1.}  Let $p=2$,  $\gamma = 3$ and $C=\{0, 3, 4, 7\}$. Then
$$
     f_0(x) = 8x, \quad f_3(x) = 8x + 3, \quad f_4(x) = 8x +4, \quad f_7(x) = 8x +7.
$$
Observe that the tree structure of $\{0, 3, 4, 7\}$ is shown as follows
$$
    0= 0\cdot 1 + 0\cdot 2 + 0 \cdot 2^2, \qquad 3= 1\cdot 1 + 1\cdot 2 + 0 \cdot 2^2$$
    $$
    4= 0\cdot 1 + 0\cdot 2 + 1 \cdot 2^2,\qquad 7= 1\cdot 1 + 1\cdot 2 + 1 \cdot 2^2.
$$
\begin{figure}[H]
  \centering
  \includegraphics[width=0.8\textwidth]{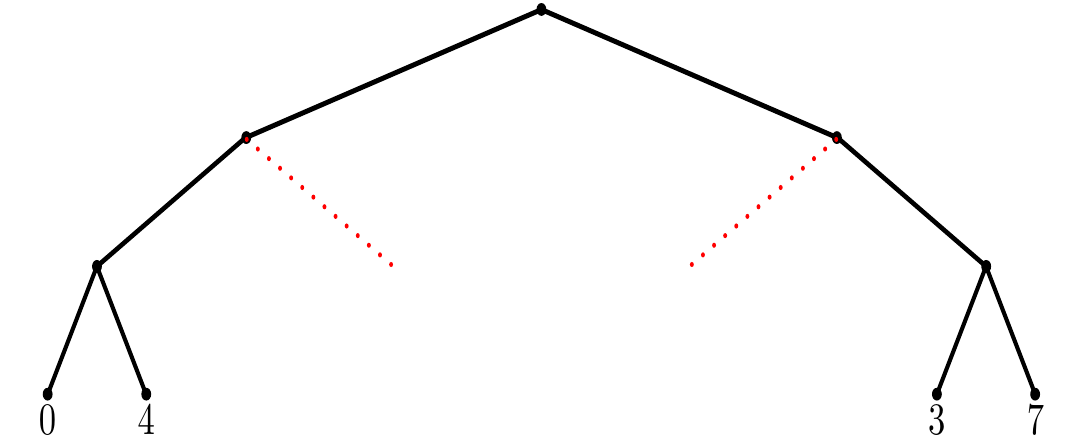}\\
  \caption{Consider $\{0,3,4,7\}$ as a  $p$-homogeneous tree.}
\label{T0347}
\end{figure}
{\em Example 2.}
 Let $p=3$,  $\gamma = 3$ and $C=\{0, 4, 8, 9, 13, 17, 18,22, 26\}$. We have
$$
    \ 0= 0\cdot 1 + 0\cdot 3 + 0 \cdot 3^2, \qquad \ 4= 1\cdot 1 + 1\cdot 3 + 0 \cdot 3^2,
              \qquad \ 8= 2\cdot 1 + 2\cdot 3 + 0 \cdot 3^2
    $$
    $$
    \ 9= 0\cdot 1 + 0\cdot 3 + 1 \cdot 3^2,\qquad 13= 1\cdot 1 + 1\cdot 3 + 1 \cdot 3^2,
                \qquad 17= 2\cdot 1 + 2\cdot 3 + 1 \cdot 3^2
$$
$$
    18= 0\cdot 1 + 0\cdot 3 + 2 \cdot 3^2,\qquad 22= 1\cdot 1 + 1\cdot 3 + 2 \cdot 3^2,
                  \qquad 26= 2\cdot 1 + 2\cdot 3 + 2 \cdot 3^2.
$$
\begin{figure}[H]
  \centering
  \includegraphics[width=1.0\textwidth]{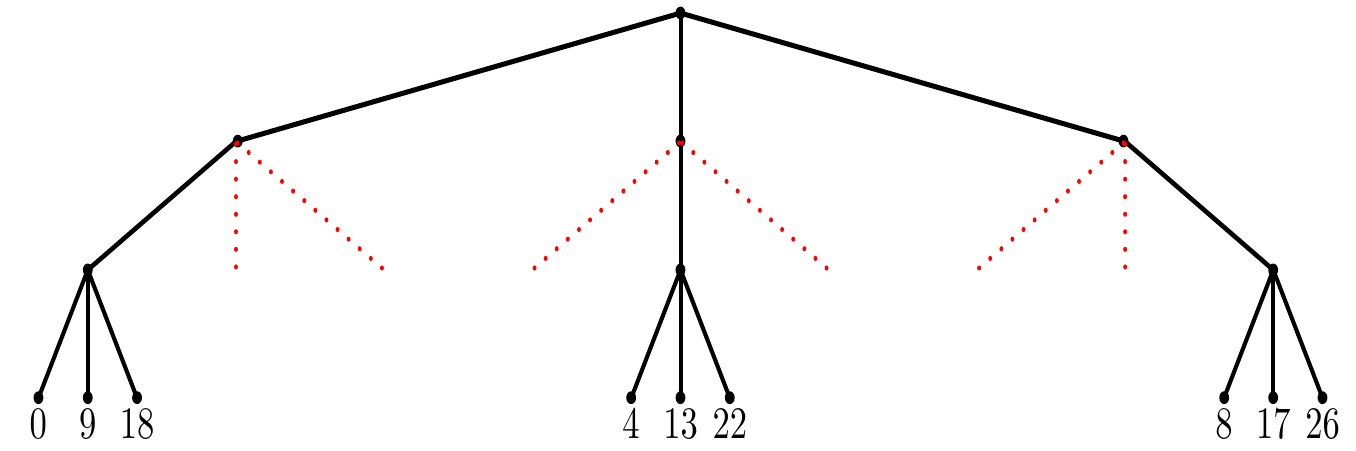}\\
  \caption{Consider $\{0, 4, 8, 9, 13, 17, 18,22, 26\}$ as a  $p$-homogeneous tree.}
\label{T0489...}
\end{figure}

\section*{acknowledgment}
The authors thank Xing-Gang He and  Lingmin Liao for their helpful discussions and  valuable remarks. The authors would also like to thank the referee for valuable suggestions and comments.
\setcounter{equation}{0}


\begin{thebibliography}{99}







\bibitem{BKKM10} 
D. Bose, C. P. A. Kumar, R. Krishnan and  S. Madan, 
{\em On Fuglede's conjecture for three intervals.} Online J. Anal. Comb. No. 5 (2010), 24 pp.

\bibitem{BM11}
D. Bose and  S. Madan,  {\em Spectrum is periodic for $n$-intervals.}
 J. Funct. Anal. 260 (2011), no. 1, 308-325. 

\bibitem{BM14}
D. Bose and  S. Madan,
{\em ``Spectral implies tiling'' for three intervals revisited.}
 Forum Math. 26 (2014), no. 4, 1247-1260.
 
 \bibitem{Br1953}
N. G. de Bruijn, {\em On the factorization of cyclic groups}, Indag. Math. Kon. Ned. Akad. Wretch. 15 (1953), 370-377.

\bibitem{CM1999}
 E. M. Coven and A. Meyerowitz. 
 {\em Tiling the integers with translates of one finite set.}
  J. Algebra, 212(1):161-174, 1999.


\bibitem{Fan}
A. H. Fan, {\em Spectral measures on local fields}, pp 15-25, in Difference Equations, Discrete Dynamical Systems and Applications, M. Bohner et al (eds), Springer Proceedings in Mathematics and Statistics 150. Springer, Switzerland, 2015. arXiv:1505.06230


\bibitem{FFLS2015}
A. H. Fan, S. L. Fan, L. M. Liao and R. X. Shi, {\em  Fuglede's conjecture holds in $\Q_p$}, Preprint. 	arXiv:1512.08904


\bibitem{FMM06}
B. Farkas, M. Matolcsi and   P. M{\'o}ra,
  {\em On Fuglede's conjecture and the existence of universal spectra. }
J. Fourier Anal. Appl. 12 (2006), no. 5, 483-494. 


\bibitem{Fuglede1974}
B. Fuglede, {\em Commuting self-adjoint partial differential operators and
a group theoretic problem}, J. Funct. Anal., 16 (1974), 101-121.


 




\bibitem{Iosevich-Katz-Tao2003}
A. Iosevich, N. Katz  and T. Tao, 
 {\em The Fuglede spectral conjecture holds for convex planar domains}. Math. Res. Lett. 10 (2003), no. 5-6, 559-569.

\bibitem{Iosevich-Pedersen1998}
A. Iosevich and  S. Pedersen,
{\em Spectral and tiling properties of the unit cube,}
 Internat. Math. Res. Notices 16 (1998), 819-828.


\bibitem{Jorgensen-Pedersen1992}
P. Jorgensen and  S. Pedersen, 
{\em Spectral theory for Borel sets in $R^n$ of finite measure,} J. Funct. Anal. 107 (1992), 72-104. 



\bibitem{Jorgensen-Pedersen1998} P. Jorgensen and S. Pedersen,
{\em Dense analytic subspaces in fractal $L^2$-spaces}, J. Anal. Math., 75 (1998), 185-228.


\bibitem{Jorgensen-Pedersen1999} 
P. Jorgensen and S. Pedersen,
{\em Spectral pairs in Cartesian coordinates, }
J. Fourier Anal. Appl. 5 (1999), 285-302. 

\bibitem{Kolountzakis2000convex}
M. Kolountzakis,
{\em Non-symmetric convex domains have no basis of exponentials,}
 Illinois J. Math. 44 (2000), 542-550.
\bibitem{Kolountzakis2000}
 M. Kolountzakis,
 {\em Packing, tiling, orthogonality, and completeness,} 
 Bull. London Math. Soc. 32 (2000), 589-599.

\bibitem{Kolountzakis2006}
M. N. Kolountzakis  and M. Matolcsi, {\em Tiles with no spectra}, Forum Mathematicum, 18(2006), 519-528.

\bibitem{KM2006}
 M. N. Kolountzakis and M. Matolcsi, {\em Complex Hadamard matrices and the spectral set conjecture}, Collec. Math. Vol. Extra(2006), 281-291

\bibitem{Laba2001} I. Laba, 
{\em Fuglede's conjecture for a union of two intervals}, Proc. Amer.
Math. Soc. 129 (2001), 2965-2972.

\bibitem{Laba2002}
I. Laba,  {\em The spectral set conjecture and multiplicative properties of roots
of polynomials}, J. London Math. Soc.(2) 65 (2002), 661-671.



\bibitem{Lagarias-Wang1996}
J. C. Lagarias and Y. Wang, {\em Integral self-affine tiles in $R^n$ I. Standard and non-standard digits sets}, J. London Math. Soc., 54 (1996), 161-179.

\bibitem{Lagarias-WangInvent}
J. C. Lagarias and Y. Wang, {\em Tiling the line with translates of one tile}, Invent. Math. 124(1996), 341-365.
\bibitem{Lagarias-Wang1997}
J. C. Lagarias and Y. Wang, {\em Spectral sets and factorizations of finite abelian groups,} J. Funct. Anal. 145 (1997), 73-98. 

\bibitem{Lagarias-Reed-Wang2000}
J. C. Lagarias, J. A. Reed and Y. Wang,
{\em Orthonormal bases of exponentials for the n-cube,}
Duke Math. J. 103 (2000), 25-37.



\bibitem{Matolcsi2005}
M. Matolcsi, {\em Fuglede's conjecture fails in dimension 4}, Proc. Amer.
Math. Soc., 133(2005), 3021-3026.


\bibitem{redei1950}
 L. R\'edei. { \em Ein Beitrag zum Problem der Faktorisation von endlichen Abelschen Gruppen.} Acta Math. Acad. Sci. Hungar., 1:197-207, 1950.
 \bibitem{redei1954}
 L. R\'edei. \"Uber das Kreisteilungspolynom. Acta Math. Acad. Sci. Hungar., 5:27-28, 1954.


\bibitem{Schoenberg1964}
I. J. Schoenberg, {\em A note on the cyclotomic polynomial}, Mathematika 11 (1964) 131-136.



\bibitem{Taibleson1975}
M. H. Taibleson, {\em Fourier Analysis on Local Fields}, Princeton University Press and University of Tokyo Press, 1975.

\bibitem{Tao2004}
T. Tao, {\em Fuglede's conjecture is false in 5 and higher dimensions}, Math. Res. Lett., 11 (2004), 251-258.

\bibitem{Vladimirov-Volovich-Zelenov1994}
V. S. Vladimirov, I. V. Volovich and E. I. Zelenov, {\em $p$-adic Analysis and Mathematical Physics}, World Scientific, 1994.





\end{thebibliography}
\end{document}